\documentclass[12pt]{article}

\usepackage{amsfonts,amsmath,amssymb,latexsym,xcolor,mathrsfs,tikz,breqn,extarrows}
\usepackage{amsthm,authblk}
\usepackage{multirow}
\usepackage{mathtools}
\usepackage{calc}
\usepackage{pgfplots}
\pgfplotsset{compat=1.18}
\usetikzlibrary{patterns,arrows,decorations.pathreplacing}
\usepackage{diagbox}
\usepackage{makecell}
\usepackage{float}

\def\q{\hfill\rule{1ex}{1ex}}
\def\0{\emptyset}

\newtheorem{theorem}{Theorem}[section]
\newtheorem{definition}[theorem]{Definition}
\newtheorem{lemma}[theorem]{Lemma}
\newtheorem{claim}[theorem]{Claim}

\newtheorem{cor}[theorem]{Corollary}
\newtheorem{prop}[theorem]{Proposition}

\newtheorem{prob}[theorem]{Problem}

\newcommand{\ind}{\boldsymbol{\cdot}}

\numberwithin{equation}{section}

\usepackage{algorithm}
\usepackage{algpseudocode}
\usepackage{graphicx}
\usepackage{pstricks,pgf,subcaption,amssymb}

\begin{document}
\title{\bf
Degree-restricted semi-saturation
numbers of cliques and its applications
	 }
\author[1,2]{
Zhen He}
\author[3]{Mei Lu}
\author[3]{
Yanzhe Qiu}
\author[3]{
Yiduo Xu\thanks{Corresponding author. E-mail:\texttt{xyd23@mails.tsinghua.edu.cn}}\;}

\affil[1]{\small School of Mathematics and Statistics, Beijing Jiaotong University, Beijing 100044, China.}
\affil[2]{\small Beijing Key Laboratory of Biological Big Data and Topological Statistics, Beijing Jiaotong University, Beijing, 100044, P.R. China.}
\affil[3]{\small Department of Mathematical Sciences, Tsinghua University, Beijing 100084, China}

\date{}

\maketitle\baselineskip 16.3pt

\begin{abstract}
	A graph $G$ is said to be $F$-semi-saturated if the addition of any nonedge $e \not \in E(G)$ would create a new copy of $F$ in $G+e$.
	The semi-saturation number $ssat(n,F)$ is the minimum number of edges in an $F$-semi-saturated graph of order $n$.
	In this paper we investigate the semi-saturation number of $K_r$ on $n$ vertices with maximal degree at most $\Delta$, denoted by $ssat^{\Delta}(n,K_r)$.
	This investigation was suggested by Erd\H os, R\'enyi and  S\'os, who in 1966 considered the graph of diameter 2  with degree restrictions, equivalently $ssat^{\Delta}(n,K_3)$.

	The following are some of our results. For arbitrary $r \geq 4$, we show that the limit $ \lim_{n \rightarrow \infty} ssat^{cn}(n,K_r)/n$ exists for all $0 < c \leq 1$, except for some sparse values of $c$ contained in a countable and rational sequence $c_i \rightarrow 0$.
	Moreover, we establish the asymptotic behaviour of this limit for $\frac{r}{r+2} < c <1$ and determine the exact value of $ssat^{\Delta}(n,K_r)$ for some specific $\Delta$.
	As an application, we determine the relation between the saturation number of the join graph $K_r \vee F$ and that of $F$ for a large class of pairs $(r,F)$.

	\end {abstract}

	{\bf Keywords.} semi-saturation number, clique, saturation number,  intersecting hypergraph

\section{Introduction}

In this paper we only consider finite, simple and undirected graphs. For a graph $G$, we use $V(G)$ to denote the vertex set of $G$, $E(G)$ the edge set of $G$, $|G|$ the order of $G$ and $e(G)$ the size of $G$. Let $u\in V(G)$ and $S\subseteq V(G)$. Denote $N(u)=\{v\in V(G):uv\in E(G)\}$,
 $N[u]=N(u)\cup\{u\}$, $N(S)=(\cup_{u\in S}N(u))\setminus S$ and $N[S]=N(S)\cup S$. The degree of $u$ is $d(u)=|N(u)|$. The minimum and maximum  degree of $G$ are  $\delta(G)=\min\{d_G(u)|u\in V(G)\}$ and $\Delta(G)=\max\{d_G(u)|u\in V(G)\}$, respectively.
Let $x,y\in V(G)$. The distance of $x$ and $y$ in $G$, denoted by $\mathrm{dist}_{G}(x,y)$, is the number of edges in the shortest path connecting $x$ and $y$.
For graphs $G_1,G_2$, let  $G_1 \cup G_2$ be the union of vertex-disjoint copy of $G_1,G_2$, and let $G_1 \vee G_2$ be the join of $G_1$ and $G_2$, obtained from $G_1 \cup G_2$ by adding all edges between $G_1$ and $G_2$.
	Given graphs $G$ and $F$, we say $G$ is \textit{$F$-free} if $G$ does not contain any copy of $F$. We say $G$ is \textit{$F$-semi-saturated} if the addition of any nonedge $e \not \in E(G)$ would create a new copy of $F$ in $G+e$. We say $G$ is \textit{$F$-saturated} if $G$ is $F$-free and $F$-semi-saturated. The \textit{saturation number} and \textit{semi-saturation number} of $F$ are denoted by
\begin{equation*}
\begin{aligned}
	   sat(n,F) & =\min \{ e(G):G  \; \text{is}  \; F\text{-saturated  and } |G|=n  \} \, , \\
	   ssat(n,F) & =\min \{ e(G):G  \; \text{is}  \; F\text{-semi-saturated  and } |G|=n  \} \, .
\end{aligned}
\end{equation*}

The first (semi)-saturation problem was studied by Erd\H os, Hajnal and Moon \cite{EHM} in 1964, determining that $ssat(n,K_r)=sat(n,K_r)=(r-2)n-\frac{(r-2)(r-3)}{2}$. For readers interested in the saturation problem, we refer to the survey \cite{survey}.

	 The (semi)-saturation number of $F$ with restriction on maximum degree is denoted by
\begin{equation*}
\begin{aligned}
	   (s)sat^{\Delta}_*(n,F) & =\min \{ e(G):G  \; \text{is}  \; F\text{-(semi)-saturated, } |G|=n \text{ and } \Delta(G)=\Delta \} \, ; \\
	   (s)sat^{\Delta}(n,F) & =\min \{ e(G):G  \; \text{is}  \; F\text{-(semi)-saturated, } |G|=n \text{ and } \Delta(G) \leq \Delta \} \, .
\end{aligned}
\end{equation*}
	If such an $F$-(semi)-saturated graph with restriction on maximum degree does not exist, we define $(s)sat^{ \Delta}_{(*)}(n,F) =  \infty$. For example, $sat^{r-2} (n, K_r)= \infty$ for $ n \geq r \geq 3$, since any graph that has maximum degree at most $r-2$ cannot be $K_r$-saturated.
	The above definitions clearly imply
\begin{equation}\label{eq:1.1}
\begin{aligned}
	   sat^{ \Delta}_{(*)}(n,F) & \geq ssat^{\Delta}_{(*)}(n,F) \; ; \\
	   (s)sat^{ \Delta}_{*}(n,F) & \geq (s)sat^{\Delta}(n,F) \; .
\end{aligned}
\end{equation}
For fixed $n$ and $F$, it is clear that
\begin{equation}\label{eq:1.2}
(s)sat(n,F)= \min \{ \, (s)sat^{ n-2}(n,F) \, , \; (s)sat^{n-1}_*(n,F) \, \} \, .
\end{equation}

The (semi)-saturation problem with restriction on maximum degree was first studied by Hajnal \cite{Hajnal} in 1965, and later by Erd\H os, R\'enyi and S\'os \cite{ERS} in 1966 who gave the exact value of $ssat^{\Delta}_*(n,K_3)$ for $\Delta \geq (n+1)/2$ when $n$ is sufficiently large.
Later Hanson and Seyffarth \cite{Hanson} and Duffus and Hanson \cite{Duffus} studied saturated graphs with prescribed maximum and minimum degree, respectively.
Pach and Sur\'anyi \cite{Pach1,Pach2}  proved that for all $c \in (0,1]$ except certain sparse values the limit $\lim_{n \rightarrow \infty} \frac{ssat^{cn}(n,K_3)}{n}$ exists, and determined the limit when $\frac{1}{2} < c < 1$.
V\v r\v to and Zn\'am \cite{Vrto,Znam} extended the range of $c$ down to $\frac{3}{7}$.
F{\"u}redi \cite{Fur1} later surveyed these results and conjectured the leading term of the limit for $\frac{1}{3} < c < \frac{3}{7}$.

By \eqref{eq:1.1} we know that $ssat^{\Delta}(n,F) \leq sat^{\Delta}(n,F)$, so we can study the saturation number of $F$ indirectly via its semi-saturation number.
F{\"u}redi and Seress \cite{Fur2} determined $sat^{\Delta}(n,K_3)$ exactly when $\Delta \geq (n-2)/2$ and $n$ is sufficiently large.
They also showed that the limit $\lim_{n \rightarrow \infty} \frac{sat^{cn}(n,K_3)}{n}$ exists for all $c \in (0,1]$ except certain sparse values.
Later Erd\H os and Holzman \cite{EH} established the asymptotics of $sat^{cn}(n,K_3)$ as $n \rightarrow \infty$ for $\frac{2}{5} \leq c < \frac{1}{2}$.
Alon, Erd\H os, Holzman and Krivelevich \cite{Alon} proved analogous results on $K_r$ for $r \geq 4$. They showed that the limit $\lim_{n \rightarrow \infty} \frac{sat^{ cn}(n,K_r)}{n}$ exists for $r \geq 4$ and all $c \in (0,1]$ except certain sparse values.
Amin, Faudree, Gould and Sidorowicz \cite{Amin} studied $sat^{\Delta}(n,K_r)$ when $\Delta \leq n-2$.
Beyond these results, the problem of determining $ssat^{\Delta}(n,K_r)$ for $r \geq 4$ continues to resist solution and remains open.

To study $ssat^{\Delta}(n,K_r)$ for $r \geq 4$, we follow the approach of Pach and Sur\'anyi \cite{Pach1,Pach2} and its hypergraph reformulation due to Alon, Erd\H os, Holzman and Krivelevich \cite{Alon}.
A \textit{hypergraph} $\mathcal{H}$ on a finite vertex set $V$ is a family $\mathcal{H}=\{H_1,\ldots,H_m\}$ of nonempty subsets of $V$; we write $|\mathcal{H}|=m$ and $|H_i|$ for the size of a hyperedge.
A hypergraph $\mathcal{H}$ is \textit{$r$-uniform} if $|H_i|=r$ for every hyperedge $H_i$.
Two hyperedges $H,H' \in \mathcal{H}$ \textit{intersect} if $H \cap H' \neq \emptyset$; we say that $\mathcal{H}$ is \textit{$k$-intersecting} if $|H \cap H'| \geq k$ for all distinct $H,H' \in \mathcal{H}$. We also use $\mathcal{H}=(V,\mathcal{E})$ to denote a hypergraph with vertex set $V$ and hyperedge set $\mathcal{E}$.

For a hypergraph $\mathcal{H}=\{H_1, \ldots, H_m\}$, a \textit{fractional matching} of $\mathcal{H}$ is a nonnegative weight $(f_1,\ldots,f_m)$ of the hyperedges such that each vertex of $V$ is covered by total weight at most $1$.
The \textit{fractional matching number} $\nu^*(\mathcal{H})$ is the maximum total weight $\sum_{i=1}^m f_i$ over all fractional matchings; it is the optimum of a linear program dual to the fractional edge-cover problem, and is given explicitly in \eqref{eq:1.4} below.
An auxiliary quantity $a(\mathcal{H},c)$ is defined by a linear program in Definition~\ref{D11} below for each hypergraph $\mathcal{H}$ and a real parameter $c>0$.

\begin{definition}\label{D11} Given a hypergraph $\mathcal{H}=\{H_1, \ldots, H_m\}$ on a set $V$ and a real number $c >0 $, let $a(\mathcal{H},c)$ be the optimal value of the following linear programming problem.
	\begin{equation}\label{eq:1.3}
		\begin{aligned}
		   \min   \sum_{i=1}^{m} & \;  |H_i|\, y_i  \\
		\mathbf{s.t.}  \quad  \sum_{i:x \in H_i} y_i & \leq c \, , & & \forall \, x \in V \, , \\
		y_i &  \geq 0 \, ,& & \forall \, 1 \leq i \leq m \, , \\
		\sum_{i=1}^{m} y_i & =1 \, ,
		\end{aligned}
		\end{equation}
\end{definition}

The \textit{fractional matching number} $\nu^*(\mathcal{H})$ of the hypergraph $\mathcal{H}$ is defined as
\begin{equation}\label{eq:1.4}
	\begin{aligned}
	  \nu^*(\mathcal{H})  = \max   \sum_{i=1}^{m} & \;  f_i  \\
	\mathbf{s.t.}  \quad  \sum_{i:x \in H_i} f_i & \leq 1 \, , & & \forall \, x \in V \, , \\
	f_i &  \geq 0 \, ,& & \forall \, 1 \leq i \leq m \, . \\
	\end{aligned}
	\end{equation}
Clearly, $c \geq 1/\nu^*(\mathcal{H})$ is a necessary and sufficient condition for the feasibility of the linear programming problem \eqref{eq:1.3}. Note that if $\mathcal{H}$ is $r$-uniform and $c \geq 1/\nu^*(\mathcal{H})$, then $a(\mathcal{H},c) = r$.
\vspace{0.3em}

\begin{prop}\label{P12} \text{\bf \cite{Pach1}} For a fixed hypergraph $\mathcal{H}$, $a(\mathcal{H},c)$ is a continuous, convex, piecewise linear and monotone nonincreasing function on the interval $[1/\nu^*(\mathcal{H}), + \infty)$.
\end{prop}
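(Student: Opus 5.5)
We view $a(\mathcal{H},c)$ as the optimal value of a parametric linear program in which $c$ appears only in the right-hand sides of the constraints. Write $P(c)=\{y\in\mathbb{R}^m: y\ge 0,\ \sum_i y_i=1,\ \sum_{i:x\in H_i}y_i\le c\ \forall x\in V\}$. Then $a(\mathcal{H},c)=\min_{y\in P(c)}\sum_i|H_i|y_i$. Each $P(c)$ is a closed subset of the standard simplex, hence compact, so the minimum is attained whenever $P(c)\neq\emptyset$. By the remark after \eqref{eq:1.4}, this holds exactly for $c\ge 1/\nu^*(\mathcal{H})$. Indeed, a feasible $y$ rescaled by $1/c$ is a fractional matching of weight $1/c$; conversely, an optimal fractional matching $f$ rescaled to $y=f/\nu^*$ satisfies $\sum_{i:x\in H_i} y_i\le 1/\nu^*\le c$. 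The proof then proceeds in four steps.

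\emph{Monotonicity.} If $c\le c'$ then $P(c)\subseteq P(c')$, so the minimum over the larger set is no larger, and $a$ is nonincreasing.

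\emph{Convexity.} Take $c_1,c_2\ge1/\nu^*(\mathcal{H})$ with optimal solutions $y^{(1)},y^{(2)}$, and let $\lambda\in[0,1]$. Then $\lambda y^{(1)}+(1-\lambda)y^{(2)}$ is nonnegative, sums to $1$, and every vertex constraint is linear in $(y,c)$, so it is bounded by $\lambda c_1+(1-\lambda)c_2$. It is therefore feasible for $\lambda c_1+(1-\lambda)c_2$, and its objective value is $\lambda a(\mathcal{H},c_1)+(1-\lambda)a(\mathcal{H},c_2)$. This gives convexity.

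\emph{Piecewise linearity.} We use LP duality. The dual of \eqref{eq:1.3} is
\[
\max\Bigl(t-c\sum_{x\in V} z_x\Bigr)\quad\text{subject to}\quad t-\sum_{x\in H_i}z_x\le |H_i|\ \ \forall i,\qquad z\ge0,\ t\in\mathbb{R}.
\]
Its feasible region $Q$ does not depend on $c$, and it is nonempty: take $z=0$ and $t=\min_i|H_i|$. Strong duality holds for every $c\ge1/\nu^*$, because the primal is feasible and bounded. Hence $a(\mathcal{H},c)=\max_{(t,z)\in Q}\bigl(t-c\sum_x z_x\bigr)$.

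The main obstacle is that $Q$ is unbounded. However, $Q$ is a pointed polyhedron, since it contains no line: $z\ge0$, and $t$ is bounded above for fixed $z$. When the maximum is finite it is attained at one of the finitely many vertices of $Q$. Therefore $a(\mathcal{H},c)=\max_{k}(t_k-cs_k)$ over finitely many vertices $(t_k,z^{(k)})$ of $Q$, where $s_k=\sum_x z^{(k)}_x\ge0$. A maximum of finitely many affine functions is piecewise linear and convex, which also re-proves convexity. The slopes $-s_k\le0$ re-confirm monotonicity. The remaining detail is that at each $c$ in the range the maximum over $Q$ is attained at a vertex, not merely approached along a recession direction; finiteness of the primal rules out the latter.

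\emph{Continuity.} On the closed half-line $[1/\nu^*,\infty)$, continuity follows immediately from the representation as a finite maximum of affine functions. This includes the left endpoint, where convexity alone would only give continuity in the interior.

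Finally, note that the function is real-valued on the whole range because its values lie between $\min_i|H_i|$ and $\max_i|H_i|$.
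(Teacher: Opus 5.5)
Your argument is correct. The paper gives no proof of Proposition~\ref{P12}: it imports the result from Pach and Sur\'anyi and uses it only as a black box in the proof of Theorem~\ref{T13}. What you have written is therefore a self-contained replacement for a citation, not a variant of an argument in the paper.

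Your route is the standard parametric-LP one. Monotonicity and convexity come directly from the primal: the feasible sets are nested, $P(c)\subseteq P(c')$, and mixing two optimal solutions gives a feasible point at the intermediate value of $c$. Piecewise linearity and continuity come from the observation that $c$ appears only in the dual objective. Three facts then do the work:
\begin{itemize}
\item the dual polyhedron $Q$ does not depend on $c$;
\item $Q$ is pointed;
\item for every $c\ge 1/\nu^*(\mathcal{H})$ the dual optimum is finite by weak duality, and hence attained at a vertex of $Q$.
\end{itemize}
Together they give $a(\mathcal{H},c)=\max_k (t_k-cs_k)$, over the finitely many vertices of $Q$, on the whole closed half-line.

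This representation gives more than the four properties separately.
\begin{itemize}
\item It yields continuity at the left endpoint $1/\nu^*(\mathcal{H})$. As you correctly note, convexity alone would not rule out an upward jump there.
\item Since $Q$ is defined by integer data, its vertices are rational. Hence the breakpoints $c=(t_k-t_l)/(s_k-s_l)$ of $a(\mathcal{H},\cdot)$ are rational too, a small bonus beyond what the paper needs.
\end{itemize}
Once you have the max-of-affine representation, your separate primal proofs of monotonicity and convexity are redundant, but they do no harm.
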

\vspace{0.3em}

Given a positive integer $k$ and a real number $c >0 $, let
\begin{equation*}
A_k(c) : = \inf \{ \, a(\mathcal{H},c) : \mathcal{H} \text{ is $k$-intersecting, $ 1/\nu^*(\mathcal{H}) \leq c$} \, \} \, .
\end{equation*}
We show later in Section~2 that there exists a $k$-intersecting hypergraph $\mathcal{H}$ with $1/\nu^*(\mathcal{H}) \leq c$, which implies that $a(\mathcal{H},c)$ and $A_k(c)$ are well-defined. We can now state our main results.
\vspace{0.3em}

\begin{theorem}\label{T13}
For a fixed positive integer $k \geq 2$, $A_k(c)$ is a monotone nonincreasing, piecewise linear and right continuous function on $c \in (0,1]$. The points of discontinuity are all rational and contained in a sequence $1=c_0 >c_1 >c_2 > \ldots \rightarrow 0$.
\end{theorem}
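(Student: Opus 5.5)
We follow the strategy of Pach--Sur\'anyi and Alon--Erd\H os--Holzman--Krivelevich: show that on every interval $[c_{j+1},c_j)$ the infimum defining $A_k(c)$ is attained by \emph{finitely many} hypergraphs, and then use Proposition~\ref{P12}. Monotonicity is immediate. If $c\le c'$, every $\mathcal{H}$ admissible at $c$ (i.e.\ $1/\nu^*(\mathcal{H})\le c$) is admissible at $c'$. Each feasible $y$ for \eqref{eq:1.3} at $c$ is also feasible at $c'$. Hence $a(\mathcal{H},c')\le a(\mathcal{H},c)$, and taking infima gives $A_k(c')\le A_k(c)$.

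\textbf{Key reduction.} For a $k$-intersecting $\mathcal{H}$ with $k\ge 2$ and $c$ bounded below by some $c_*>0$, we replace $\mathcal{H}$ by a hypergraph of bounded complexity that does no worse. Take an optimal $y$ for \eqref{eq:1.3}. Since $\sum_i y_i=1$ and each vertex carries load at most $c$, at least $1/c\le 1/c_*$ hyperedges receive positive weight. Vertices of the same type (same set of hyperedges containing them) can be merged, and hyperedges with $y_i=0$ can be discarded; doing so preserves $k$-intersection, does not increase the objective, and preserves feasibility. What remains is to bound the number of hyperedges in the support. By LP basic solutions, an optimal vertex of the polytope has support of size at most $|\text{types}|+1$, so we need a bound that does not depend on $\mathcal{H}$.

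Here we use $k$-intersection. Two hyperedges share $\ge k\ge 2$ vertices, and $\sum_i y_i|H_i|\le c\,|V|$ while every pair must intersect. A counting argument in the style of Alon et al.\ should bound the number of hyperedges carrying weight at least $\varepsilon$ in terms of $c_*$. Small-weight edges then need to be handled, for instance by shifting their weight onto a fixed large edge at a cost of $O(\varepsilon)$. \emph{This is the step I expect to be the main obstacle.}

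Two alternatives are available if the counting route stalls. The first is a Ramsey/sunflower compactness argument: a bounded-size subfamily of the support is $k$-intersecting and captures all but $\varepsilon$ of the weight. The second is to show directly that for $c\ge c_*$ only hypergraphs with at most $N(c_*)$ edges and vertices (after type-merging) matter. The second route gives an exact finite family $\mathcal{F}(c_*)$ with
\[
A_k(c)=\min_{\mathcal{H}\in\mathcal{F}(c_*)} a(\mathcal{H},c)\qquad (c\ge c_*).
\]

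\textbf{Conclusion.} Given the finite reduction, the conclusion follows. Each $a(\mathcal{H},\cdot)$ is continuous, convex, piecewise linear and nonincreasing on $[1/\nu^*(\mathcal{H}),\infty)$ by Proposition~\ref{P12}, and equals $+\infty$ below that point. A minimum of finitely many such functions is piecewise linear. It can jump only at the thresholds $1/\nu^*(\mathcal{H})$, and these are rational because $\nu^*$ is the optimum of an LP with integer data. At such a threshold the new function is defined \emph{at} the point, so the minimum is right continuous there. Since $\mathcal{F}(c_*)$ is finite for each $c_*>0$, the discontinuities in $[c_*,1]$ are finite in number. Letting $c_*\downarrow 0$ along a sequence, we can enumerate all discontinuities as a decreasing sequence $1=c_0>c_1>\cdots\to 0$, as the theorem claims.
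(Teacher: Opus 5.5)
\textbf{Gap 1: the support bound.} The finiteness reduction is the whole content of the theorem beyond Proposition~\ref{P12}, and your proposal does not prove it. You flag the support bound as the main obstacle and then offer two things. One is an $\varepsilon$-truncation, which fails as stated: moving weight onto a fixed edge can push a vertex load above $c$. Even if repaired, an $O(\varepsilon)$-approximation does not by itself give exact piecewise linearity with finitely many breakpoints. The other is a restatement of the goal.

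The missing idea is elementary and uses no $k$-intersection:
\begin{itemize}
\item At an optimal vertex $y$ of the polytope in \eqref{eq:1.3}, at least $m$ linearly independent constraints are tight.
\item The tight vertex constraints $\sum_{i:x\in H_i}y_i=c$ number at most $a(\mathcal{H},c)/c$. This is because $\sum_{x\in V}\sum_{i:x\in H_i}y_i=\sum_i|H_i|y_i=a(\mathcal{H},c)$ and each tight vertex contributes exactly $c$.
\item Hence at most $a(\mathcal{H},c)/c+1$ coordinates of $y$ are nonzero.
\end{itemize}
Your bound $|\text{types}|+1$ counts all vertex constraints rather than only the tight ones, which is why it did not close.

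The projective-plane construction gives $A_k(c)\le 2k(1+1/c)$ (Lemma~\ref{L21}), so you may restrict to $\mathcal{H}$ with $a(\mathcal{H},c)\le 2k(1+1/c)$. Deleting the zero-weight edges then leaves a $k$-intersecting hypergraph with the same value of $a(\cdot,c)$ and at most $2k(1/c+1/c^2)+1\le 2k(1/c_*+1/c_*^2)+1$ edges, uniformly for $c\ge c_*$. This is Lemma~\ref{L22}.

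\textbf{Gap 2: merging vertices.} Your claim that merging vertices of the same type preserves $k$-intersection is false. If two hyperedges meet in exactly $k$ vertices that all have the same type, after merging they meet in a single vertex. Multiplicities must be kept.

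The paper writes each bounded-edge $\mathcal{H}$ as an $(a_1,\ldots,a_r)$-blow-up of a separated hypergraph on $r\le 2^m$ vertices. Feasibility of \eqref{eq:1.3} does not depend on the $a_j$, while the objective is increasing in them. Since $\mathscr{B}(\mathcal{H})$ has only finitely many minimal elements in $(\mathbb{N}^r,\prec)$, finitely many hypergraphs suffice (Lemma~\ref{L23}).

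A simpler fix in your framework is to cap every type at multiplicity $k$. This preserves $k$-intersection: two edges that both contain a type still share those $k$ copies, and all other intersections are untouched. It leaves every load unchanged and does not increase the objective. So only finitely many hypergraphs remain, with at most $m$ edges and at most $k2^m$ vertices.

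With these two steps supplied, your monotonicity argument and your concluding paragraph go through as written. That paragraph covers rationality of $1/\nu^*$, right continuity at the thresholds, and finitely many breakpoints in each $[c_*,1]$.
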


Returning to the problem of determining $ssat^{\Delta}(n,K_r)$ for $r \geq 4$, we establish an asymptotically tight bound for $ssat^{\Delta}(n,K_r)$ using Theorem~\ref{T13}.

\begin{theorem}\label{T14}
For a fixed positive integer $r \geq 4$, a real number $0 < c < 1$ and $\Delta = cn$, if $A_{r-2}(c)$ is continuous at $c$, then
\[
\lim_{n \rightarrow \infty} \frac{ssat^{\Delta}(n,K_r)}{n} = A_{r-2}(c).
\]
\end{theorem}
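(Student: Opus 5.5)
Two bounds are needed: a construction giving $\limsup_n ssat^{cn}(n,K_r)/n \le A_{r-2}(c)+\varepsilon$, and a structural argument giving $\liminf_n ssat^{cn}(n,K_r)/n \ge A_{r-2}(c)-\varepsilon$. Continuity of $A_{r-2}$ at $c$ lets us compare with slightly perturbed parameters. Both directions follow the Pach--Sur\'anyi / Alon--Erd\H os--Holzman--Krivelevich scheme. A graph of maximum degree $cn$ with about $an$ edges has almost all vertices of bounded degree and a bounded number of vertices of high, linear degree. The ``type'' of a low-degree vertex is the set of high-degree vertices in its neighbourhood. These types form a hypergraph $\mathcal{H}$, and the proportions $y_i$ of vertices of each type form a feasible solution of \eqref{eq:1.3}.

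\textbf{Upper bound.} Fix $\varepsilon>0$. By continuity and monotonicity (Theorem~\ref{T13}) choose $c'<c$ close to $c$ with $A_{r-2}(c')<A_{r-2}(c)+\varepsilon$. Then choose an $(r-2)$-intersecting $\mathcal{H}=\{H_1,\dots,H_m\}$ on a finite set $V$ with $1/\nu^*(\mathcal{H})\le c'$ and $a(\mathcal{H},c')<A_{r-2}(c)+\varepsilon$, and let $y$ be an optimal solution of \eqref{eq:1.3} at $c'$, which we may take rational. Build $G$ as follows. Let $V$ be a clique of bounded size (the ``core''). Add about $y_i n$ vertices of type $i$, each adjacent exactly to $H_i$, and put the type classes into a bounded-degree structure as needed. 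Each core vertex $x$ then has degree about $n\sum_{i\ni x}y_i\le c'n<cn$, and the total edge count is $n\sum_i|H_i|y_i+O(1)$.

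Next check $K_r$-semi-saturation. For a nonedge $uv$ between low-degree vertices of types $i,j$, we have $|H_i\cap H_j|\ge r-2$, so $u,v$ together with $r-2$ common core vertices (which form a clique) give a new $K_r$ in $G+uv$. Two vertices of the same type need $|H_i|\ge r-2$, which holds because the intersecting condition gives $|H_i|\ge r-2$. Core vertices are all adjacent, and a nonedge between a core vertex $x$ and a low vertex $u$ of type $i$ is handled by $u,x$ and $r-2$ vertices of $H_i$ when $|H_i\setminus\{x\}|\ge r-2$. Where this fails, we add a few extra edges or replace $\mathcal{H}$ by a slightly enlarged hypergraph, costing $o(1)$ per vertex; the slack $c-c'$ absorbs any degree increase. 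This gives $\limsup\le A_{r-2}(c)+\varepsilon$.

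\textbf{Lower bound (main obstacle).} Let $G$ be $K_r$-semi-saturated with $\Delta(G)\le cn$ and $e(G)\le an$, with $a$ bounded (otherwise there is nothing to prove). Fix a large constant $D$. Call vertices of degree at least $D$ \emph{big}; there are at most $2an/D$ of them. Since $\Delta\le cn$ with $c<1$, a vertex of degree $d$ is non-adjacent to most vertices, and each nonedge $uv$ must have at least $r-2$ common neighbours forming a clique.

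The hard part is to show that, after deleting a set $B$ of $o(n)$ exceptional vertices, there is a set $S$ of $O(1)$ ``huge'' vertices (degree at least $\delta n$) with the following property. For almost all low vertices $u$, the trace $N(u)\cap S$ satisfies: for almost all pairs $u,v$, the set $N(u)\cap N(v)$ contains at least $r-2$ vertices of $S$. The plan is a counting argument. A low vertex $u$ has bounded degree, so it has at least $n-D$ non-neighbours $v$, each requiring $r-2$ common neighbours. Hence some neighbour of $u$ must cover a positive fraction of all vertices. Summing over $u$ and iterating via the pigeonhole principle on bounded neighbourhoods should force all but $o(n)$ of the pairs to be served by $O(1)$ huge vertices. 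The traces then form, after discarding rare types, an $(r-2)$-intersecting hypergraph $\mathcal{H}$ on $S$, because any two frequent types contain non-adjacent pairs.

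Let $y_i$ be the fraction of low vertices of type $H_i$. The degree bound gives $\sum_{i\ni x}y_i\le c/(1-o(1))$, and each low vertex of type $i$ contributes at least $|H_i|$ edges. Therefore
\[
e(G)\ge (1-o(1))\,n\,a\bigl(\mathcal{H},c(1+o(1))\bigr)\ge (1-o(1))\,n\,A_{r-2}\bigl(c+o(1)\bigr).
\]
Right continuity, together with continuity at $c$, gives $\liminf\ge A_{r-2}(c)$. The error terms from degree-$D$ vertices vanish as $D\to\infty$ and $\varepsilon\to0$.
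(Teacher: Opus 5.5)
Your upper bound is the paper's construction in Lemma~\ref{L25}: a clique on $V(\mathcal{H})$, with about $y_i n$ vertices attached exactly to $H_i$. It is fine. The repair you anticipate for a nonedge $ux$ with $x$ in the core is never needed. Indeed $ux\notin E(G)$ forces $x\notin H_i$, so $H_i$ itself supplies $r-2$ common neighbours forming a clique. Here $|H_i|\ge r-2$ holds because $\nu^*(\mathcal{H})\ge 1/c'>1$ forces at least two hyperedges.

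The lower bound has a genuine gap exactly at the step you call the main obstacle, and your sketch does not close it. The ``almost all pairs'' part is easy. Let $S$ be the vertices of degree at least $\delta n$ and let $d(u)<D$. Then the vertices $v$ with $uv\in E(G)$ or $N(u)\cap N(v)\not\subseteq S$ number fewer than $D+\sum_{w\in N(u)\setminus S}d(w)<D+D\delta n$. But this failure rate $D\delta$ is a fixed constant, small only when $\delta$ is small. Meanwhile the number of possible traces $N(u)\cap S$ is up to about $(2a/\delta)^{D}$. To conclude $|T_1\cap T_2|\ge r-2$ for two types, you need them to have mass exceeding about $D\delta$. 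So the loss from ``discarding rare types'' can only be bounded by roughly $(2a/\delta)^{D}D\delta$, which is useless.

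Counting bad pairs globally does not help either. A small measure of non-$(r-2)$-intersecting pairs of types does not let you delete a small-measure set of types so that the rest are pairwise intersecting. For example, take many tiny types whose ``bad'' relation is a perfect matching; this is consistent with all the counting above. Since $a(\mathcal{H},c)$ bounds $e(G)$ only for hypergraphs that are \emph{exactly} pairwise $(r-2)$-intersecting, approximate intersection is not enough.

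What is missing is a mechanism that makes the traces exactly intersecting at the cost of only $o(n)$ vertices. The paper does this in Lemma~\ref{L24} by giving up boundedness of $S$.
\begin{itemize}
\item Let $X$ be the vertices of degree at least $\log\log n$.
\item Let $Y$ be the low-degree vertices that have some low-degree partner with trace intersection below $r-2$.
\item Then $|Y|=o(n/\log\log n)$, by the Erd\H{o}s--Rado sunflower lemma together with the fact that each fixed trace is shared by only $O((\log\log n)^2)$ vertices of $Y$. A large sunflower of traces from $Y$ would produce a low-degree vertex with more than $\log\log n$ neighbours.
\end{itemize}
All traces on $V_0=X\cup Y$ are then pairwise $(r-2)$-intersecting, with no types discarded. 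The fact that $|V_0|=o(n)$ is unbounded is harmless, because $A_{r-2}$ is an infimum over hypergraphs of arbitrary size.

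If you want to keep a bounded $S$, you need an extra scale-separation step.
\begin{itemize}
\item Fix thresholds $\delta_1>\cdots>\delta_J$ with $\delta_{j+1}\le\delta_j^{D+2}$.
\item Pick a degree band $[\delta_{j+1}n,\delta_j n)$ that receives at most $an/J$ edges from low vertices, and discard the low vertices touching it.
\item Take $S$ to be the vertices of degree at least $\delta_j n$.
\end{itemize}
Then the per-vertex failure rate $D\delta_{j+1}$ lies far below $\delta_j^{D+1}$, which can serve as the frequency threshold. The rare-type loss is then $O(\delta_j)$ for fixed $a$ and $D$. Neither device appears in your proposal.
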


We also determine $A_{r-2}(c)$ explicitly: we identify some breakpoints $c_i$ and give the exact value of $A_{r-2}(c)$ on each open interval between consecutive breakpoints.
\vspace{0.3em}

\begin{theorem}\label{T15}
For a fixed positive integer $r \geq 4$, we have\,:
\begin{equation*}
\begin{aligned}
    A_{r-2}(c) = \begin{cases}
    \quad r-1 & \text{if } \; \frac{r-1}{r} < c < 1 \, , \\
	\\
	\quad r-1+(r-2)(1-c) \quad  \quad &  \text{if } \; \frac{r-3/2}{r-1/2} < c < \frac{r-1}{r} \, , \\
	\\
	\quad 3r-4-(2r-2)c & \text{if } \; \frac{r-2}{r-1} < c < \frac{r-3/2}{r-1/2} \, , \\
	\\
    \quad r & \text{if } \; \frac{r}{r+2} < c < \frac{r-2}{r-1} \, .
\end{cases}
\end{aligned}
\end{equation*}
\end{theorem}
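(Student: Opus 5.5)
The plan is to prove matching upper and lower bounds for each of the four ranges. Throughout, $k=r-2$, and the key quantity is how much weight $y_i$ can sit on edges of size $r-2$ and $r-1$. Upper bounds come from explicit $(r-2)$-intersecting hypergraphs with explicit feasible $y$. By Proposition~\ref{P12}, $a(\mathcal H,\cdot)$ is convex and piecewise linear, so it suffices to check a construction at the endpoints of each range and interpolate. Lower bounds come from LP duality for \eqref{eq:1.3}, combined with a structural classification of the small edges of an $(r-2)$-intersecting family. The dual of \eqref{eq:1.3} reads: maximize $t-c\sum_x w_x$ subject to $w\ge 0$ and $t-\sum_{x\in H}w_x\le |H|$ for all $H\in\mathcal H$. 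So for each $\mathcal H$ I must exhibit a vertex weighting $w$ certifying the claimed value.

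\textbf{Step 1 (edges of size $r-2$ are useless).} If some $H_0$ has $|H_0|=r-2$, then every other edge has at least $r-2$ common vertices with $H_0$, hence contains $H_0$. Any $x\in H_0$ then carries load $\sum_i y_i=1>c$, so the LP is infeasible. Hence all edges of a hypergraph admissible at level $c<1$ have size at least $r-1$, which gives $A_{r-2}(c)\ge r-1$ for all $c<1$.

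For the upper bound on $\frac{r-1}{r}<c<1$, take $\mathcal H=\binom{[r]}{r-1}$. It is $(r-2)$-intersecting with $\nu^*=\frac{r}{r-1}$, and it is uniform, so $a(\mathcal H,c)=r-1$ whenever $c\ge \frac{r-1}{r}$. For the last range, take $\binom{[r+2]}{r}$. Any two of its edges meet in at least $r-2$ vertices, and $\nu^*=\frac{r+2}{r}$, so $A_{r-2}(c)\le r$ for $c\ge\frac r{r+2}$.

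\textbf{Step 2 (structure of the $(r-1)$-edges).} A family of $(r-1)$-sets that pairwise meet in $r-2$ points is either contained in $\binom{R}{r-1}$ for a single $r$-set $R$, or is a star $\{S\cup\{v\}\}$ with a common $(r-2)$-core $S$. In the first case the total weight $Y$ on these edges loads the vertices of $R$ on average by $Y\frac{r-1}{r}$, and the larger edges (all of size $\ge r$) must also meet $R$ heavily. In the star case every vertex of $S$ carries load at least $Y$, so $Y\le c$, and every other edge meets $S\cup\{v\}$ in $\ge r-2$ points for each leaf $v$.

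Next I run the two middle ranges. For the lower bounds, I choose the dual weights $w$ supported on $R$, respectively on $S$ and the leaves, so that edges of size $r-1$ and the cheapest admissible larger edges both satisfy $|H|+w(H)\ge t$. For example, in the star case, an edge avoiding much of $S$ must contain many leaves and hence be large; this is where the linear terms with sizes around $2r-3$ come from. For the upper bounds, I realize the values by mixing a star or $\binom{R}{r-1}$ with suitable larger edges. I check the extremal mixtures at the breakpoints $\frac{r-1}{r}$, $\frac{r-3/2}{r-1/2}$ and $\frac{r-2}{r-1}$: the formulas agree there, giving $r-1+\frac{r-2}{r}$, a common middle value, and $r$ respectively.

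\textbf{Main obstacle.} The hardest part is the lower bound $A_{r-2}(c)\ge r$ on $\frac r{r+2}<c<\frac{r-2}{r-1}$, together with the matching lower bound in the third range. There one must show that any weight placed on $(r-1)$-edges forces enough weight onto edges of size $\ge r+1$ to compensate exactly. I would first try a case split by Step 2 (star versus $r$-set). In each case I bound the load on the core, or on $R$, and use a counting argument to show that edges of size exactly $r$ meeting both the core constraints and the $(r-2)$-intersection condition cannot absorb the remaining weight $1-Y$ at load $c$. I would then turn this into an explicit dual certificate $w$.
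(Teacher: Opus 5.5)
Your Step~1 and the two uniform constructions are correct. They settle range I completely and give $A_{r-2}(c)\le r$ on range IV. Everything else is only announced (``I choose the dual weights $w$\dots'', ``I would first try a case split''): explicit hypergraphs and weights for ranges II and III, and all three remaining lower bounds. Since a lower bound must hold for every admissible $(r-2)$-intersecting $\mathcal H$, this unexecuted part is the whole content of the theorem.

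Your breakpoint check is also wrong. The formulas do not agree at $\frac{r-1}{r}$, where they give $r-1$ versus $r-1+\frac{r-2}{r}$. Nor do they agree at $\frac{r-3/2}{r-1/2}$, where range III gives $\frac{2r^2-r-2}{2r-1}$ and range II gives $\frac{2r^2-r-3}{2r-1}$. They meet only at $\frac{r-2}{r-1}$. So there is no ``common middle value''. Your convexity interpolation must be run inside each closed range with its own hypergraph; the paper's (Appendix~\ref{app:T15-matrices}) are a star of $(r-1)$-edges with core $\{b_2,\dots,b_{r-1}\}$ together with $r$-edges.

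The missing idea is the one that drives the paper's lower bound. There it is phrased for a graph with classes $A_0,A_1,\dots,A_{r-1}$ and transferred via Theorem~\ref{T14}. You should study how \emph{all} edges meet a single $(r-1)$-edge $B=\{b_1,\dots,b_{r-1}\}$ with $y_B>0$, rather than how the $(r-1)$-edges meet each other. (If no $(r-1)$-edge carries weight, the bound $r\ge f(r,c)$ is immediate.)

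\textbf{Range IV.} Every other edge misses at most one $b_i$. Write $\mathcal A_i$ for the edges missing $b_i$ and $y(\mathcal A_i)$ for their weight. The load constraint at $b_i$ gives $y(\mathcal A_i)\ge 1-c$. These classes are disjoint and exclude $B$, so $(r-1)(1-c)+y_B\le 1$. Hence for $c<\frac{r-2}{r-1}$ no $(r-1)$-edge carries weight, and $a(\mathcal H,c)\ge r$ follows at once. The range-IV part of your ``main obstacle'' is therefore the easiest case, and the compensation by edges of size $\ge r+1$ that you anticipate never arises.

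\textbf{Range II.} With $\mathcal A=\bigcup_i\mathcal A_i$ we have $\sum_H|H|y_H=(r-1)-y(\mathcal A)+\sum_H y_H|H\setminus B|$. If every weighted edge of $\mathcal A$ has two vertices outside $B$, this is at least $(r-1)+(r-1)(1-c)>f(r,c)$. Otherwise some weighted edge, say in $\mathcal A_1$, is $(B\setminus\{b_1\})\cup\{u^*\}$. Then every edge of $\mathcal A_j$, $j\ge2$, must contain $u^*$, because it meets that edge in only $r-3$ points of $B$. A one-outside-vertex edge in some $\mathcal A_j$, $j\ge 2$, would put all of $\mathcal A$ through $u^*$, giving load at least $(r-1)(1-c)>c$. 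So these edges have two outside vertices, which yields $r-1+(r-2)(1-c)$.

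\textbf{Range III.} One further uses that the $\mathcal A_1$-edges through $u^*$ weigh at most $c-(r-2)(1-c)$. Suppose a second hub $y^*$ appears. Since $2\bigl(c-(r-2)(1-c)\bigr)<1-c$ for $c<\frac{r-3/2}{r-1/2}$, some weighted $\mathcal A_1$-edge avoids both hubs. That forces three outside vertices on every $\mathcal A_j$-edge, $j\ge 2$.

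Carried out directly on the LP, this argument is exact and even bypasses the graph detour through Theorem~\ref{T14}, so your framework is sound. As written, however, the proposal proves only range I and the upper half of range IV.
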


\begin{figure}[htbp]
\centering
\hspace*{-4em}
\begin{tikzpicture}

  \pgfmathsetmacro{\RR}{5}
  \pgfmathsetmacro{\YC}{4.75}
  \pgfmathsetmacro{\yGap}{0.7}

  \pgfmathsetmacro{\ySegIIIshift}{0.22}
  \pgfmathsetmacro{\yRtick}{\YC + 3*(\RR - \YC)}
  \pgfmathsetmacro{\yRsqtick}{\YC + 3*((\RR*\RR-2)/\RR - \YC) - \ySegIIIshift}
  \pgfmathsetmacro{\yLast}{\yRsqtick - \yGap}
  \pgfmathsetmacro{\yRmidtick}{\YC + 3*((\RR-1)+2*(\RR-2)/(2*\RR-1) - \YC) - \ySegIIIshift}
  \pgfmathsetmacro{\yRctick}{\YC + 3*((3*\RR-4-(2*\RR-2)*(2*\RR-3)/(2*\RR-1)) - \YC)}
\begin{axis}[
  width=0.88\textwidth,
  height=0.55\textwidth,
  xlabel={},
  ylabel={},
  axis lines=left,
  clip=false,
  xmin=-0.95,
  xmax=3.15,
  ymin=3.0,
  ymax=6.25,
  grid=none,
  minor x tick num=0,
  minor y tick num=0,
  enlarge x limits=false,
  enlarge y limits=false,
  xtick={0,0.48,0.96,1.44,2.82},
  xticklabels={$\frac{r}{r+2}$,$\frac{r-2}{r-1}$,$\frac{2r-3}{2r-1}$,$\frac{r-1}{r}$,$1$},
  xticklabel style={font=\normalsize},
  ytick={\yLast, \yRsqtick, \yRmidtick, \yRctick, \yRtick},
  yticklabels={%
    {{$r-1$}},%
    {\raisebox{-3ex}{$\frac{r^2-2}{r}$}},%
    \raisebox{2ex}{$r-1+\frac{2(r-2)}{2r-1}$},$3r-4-\frac{(2r-2)(2r-3)}{2r-1}$,$r$},
  yticklabel style={font=\small},
  after end axis/.code={%
    \node[anchor=north west] at (axis description cs:0.045,0.9) {\Large\textcolor{red}{\textbf{\ldots\ldots}}};
    \node[font=\normalsize] at (axis description cs:0,1) [anchor=south west] {$A_{r-2}(c)$};
    \node[font=\normalsize] at (axis description cs:1,0) [anchor=north east, yshift=-2pt] {$c$};
  },
]

  \pgfmathsetmacro{\brkA}{\RR/(\RR+2)}
  \pgfmathsetmacro{\brkB}{(\RR-2)/(\RR-1)}
  \pgfmathsetmacro{\brkC}{(2*\RR-3)/(2*\RR-1)}
  \pgfmathsetmacro{\brkD}{(\RR-1)/(\RR)}
  \pgfmathsetmacro{\wseg}{0.48}
  \pgfmathsetmacro{\wR}{1.38}
  \pgfmathsetmacro{\xA}{0}
  \pgfmathsetmacro{\xB}{\wseg}
  \pgfmathsetmacro{\xC}{2*\wseg}
  \pgfmathsetmacro{\xD}{3*\wseg}
  \pgfmathsetmacro{\xE}{3*\wseg+\wR}

  \pgfmathsetmacro{\yLo}{3.0}
  \pgfmathsetmacro{\yHi}{6.25}
  \addplot[very thick, red, domain=\xA:\xB, samples=2] {\YC + 3*(\RR - \YC)};
  \addplot[very thick, red, domain=\xB:\xC, samples=50]
    {\YC + 3*((3*\RR-4-(2*\RR-2)*(\brkB+((x-\xB)/\wseg)*(\brkC-\brkB))) - \YC)};
  \addplot[very thick, red, domain=\xC:\xD, samples=50]
    {\YC + 3*(((\RR-1)+(\RR-2)*(1-(\brkC+((x-\xC)/\wseg)*(\brkD-\brkC)))) - \YC) - \ySegIIIshift};
  \addplot[very thick, red, domain=\xD:\xE, samples=2] {\yLast};
  \addplot[forget plot, dashed, gray!60] coordinates {(\xA,\yLo)(\xA,\yHi)};
  \addplot[forget plot, dashed, gray!60] coordinates {(\xB,\yLo)(\xB,\yHi)};
  \addplot[forget plot, dashed, gray!60] coordinates {(\xC,\yLo)(\xC,\yHi)};
  \addplot[forget plot, dashed, gray!60] coordinates {(\xD,\yLo)(\xD,\yHi)};
  \addplot[forget plot, dashed, gray!60] coordinates {(\xE,\yLo)(\xE,\yHi)};

\end{axis}
\end{tikzpicture}
\caption{A schematic plot of $A_{r-2}(c)$ from Theorem~\ref{T15}.}

\end{figure}
\vspace{0.5em}

For some specific $\Delta$, we have the following results.

\begin{theorem}[Erd\H os, R\'enyi and S\'os \cite{ERS}]\label{T153}
For every $n \geq 4$, we have $ssat^{n-2}(n,K_3)=2n-5$.
\end{theorem}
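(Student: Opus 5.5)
Semi-saturation for $K_3$ means: every nonadjacent pair $u,v$ has a common neighbour, so adding $uv$ creates a triangle. Hence a $K_3$-semi-saturated graph is exactly a graph in which any two nonadjacent vertices are at distance $2$; for $n\ge 2$ and a noncomplete graph this means diameter exactly $2$. The statement therefore says that the minimum number of edges of a diameter-$2$ graph on $n$ vertices with maximum degree at most $n-2$ is $2n-5$. I would prove the upper bound by a construction and the lower bound by a counting argument.

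\textbf{Upper bound.} I would take a $C_5$ with vertices $a,b,c,d,e$ in cyclic order and attach the remaining $n-5$ vertices as vertices of degree $2$ adjacent to both $a$ and $c$. This gives $5+2(n-5)=2n-5$ edges and maximum degree $d(a)=d(c)=n-3\le n-2$. Diameter $2$ is checked pair by pair:
\begin{itemize}
\item two new vertices share $a$;
\item a new vertex and $b$ share $a$;
\item a new vertex and $d$ share $c$;
\item a new vertex and $e$ share $a$;
\item the pairs inside $C_5$ are handled by the $5$-cycle itself.
\end{itemize}
For $n=4$ the bound $2n-5=3$ is attained by $P_4$, with maximum degree $2$ and diameter $3$. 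This needs care: for $n=4$ one has to check whether the statement is meant with this convention, and otherwise handle $n=4$ (and $n=5$, where $C_5$ works) separately.

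\textbf{Lower bound.} Let $G$ have diameter at most $2$, $\Delta(G)\le n-2$ and $e(G)\le 2n-6$; I aim for a contradiction.
\begin{itemize}
\item \emph{Step 1.} There is no vertex of degree $1$ adjacent to a vertex $v$ of degree below $n-1$. Indeed, if $u$ is a leaf with neighbour $v$ and $w$ is not adjacent to $v$, then $\mathrm{dist}(u,w)\ge 3$. So $\delta\ge 2$, since $\Delta\le n-2$ forbids a dominating vertex.
\item \emph{Step 2.} Pick a vertex $v$ of maximum degree $\Delta$, with $N=N(v)$ and $R=V\setminus N[v]$, so $|R|=n-1-\Delta\ge 1$. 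Each $x\in R$ needs a neighbour in $N$. Counting edges incident to $R$ and to $v$ gives roughly $e(G)\ge \Delta+|R|+\tfrac12(\ldots)$, and the aim is to show the total is at least $2n-5$.
\item \emph{Step 3 (the standard argument).} Since $\sum d=2e\le 4n-12$ and $\delta\ge 2$, there are many degree-$2$ vertices. Two nonadjacent degree-$2$ vertices must share a neighbour. I would then analyse the structure of the degree-$2$ vertices: their neighbourhoods form a family of $2$-sets that pairwise intersect unless the vertices are adjacent. Such a family is either a star, which forces a common neighbour of almost all vertices and hence large degree, or a triangle, which gives few vertices.
\end{itemize}

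The main obstacle is this case analysis. In the star case, let the common vertex be $a$. Then $d(a)\le n-2$ forces at least one vertex outside $N[a]$, and connecting it to everything within distance $2$ costs extra edges. One must show the deficit cannot be absorbed, i.e.\ that the total is at least $2n-5$. I would formalise this by letting $t$ be the number of degree-$2$ vertices and bounding $2e\ge 2t+3(n-t)$. This gives $t\ge n+12-2\cdot(\text{slack})$, so almost all vertices have degree $2$. The few exceptional vertices are then enumerated directly.

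Alternatively, I would cite Erd\H os–Rényi–Sós (as the paper does), since the statement is quoted from \cite{ERS}.
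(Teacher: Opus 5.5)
The paper gives no proof of this statement; it only cites Erd\H{o}s, R\'enyi and S\'os, so your fallback of citing them is exactly what the paper does. The problems are in your self-contained argument.

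Several parts are correct:
your reformulation (every nonadjacent pair has a common neighbour), your construction, and Step~1 ($\delta\ge 2$). The construction is $C_5$ with one vertex blown up into $n-4$ independent copies, giving $2n-5$ edges and $\Delta=n-3$.

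Your remark on $n=4$ is confused. $P_4$ is not $K_3$-semi-saturated, since its two ends have no common neighbour, so it attains nothing. A $3$-edge graph on $4$ vertices in which nonadjacent pairs have common neighbours is connected, hence a tree, hence $K_{1,3}$, which violates $\Delta\le 2$. So $ssat^{2}(4,K_3)=4$ (attained by $C_4$), and the statement as written is false at $n=4$. It holds only for $n\ge 5$, which is all the paper needs in Theorem~\ref{T18}.

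The genuine gap is the lower bound. Step~2 is never carried out, and Step~3 is wrong. From $\delta\ge 2$ and $2e\le 4n-12$ you get $3n-t\le 2e\le 4n-12$, i.e.\ only $t\ge 12-n$. That is vacuous, not $t\ge n+12-\ldots$. Average degree near $4$ does not force any degree-$2$ vertices. The Petersen graph ($n=10$, $\Delta=3$, diameter $2$, $e=15=2n-5$) is extremal and has none. So your star/triangle dichotomy and the final enumeration have no foundation.

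What works is to take $v$ of \emph{minimum} degree $\delta$, with $N=N(v)$ and $R=V\setminus N[v]$. Note that $d_N(x)\ge 1$ for every $x\in R$.
\begin{itemize}
\item If $\delta\ge 4$, then $e\ge 2n$.
\item If $\delta=3$, count edges at $v$, edges between $N$ and $R$, and edges inside $R$ with weight $\frac12$. This gives $e\ge 3+\sum_{x\in R}\frac{d(x)+d_N(x)}{2}\ge 3+2(n-4)=2n-5$.
\item If $\delta=2$, write $N=\{a,b\}$ and split $R$ into $R_a,R_b,R_{ab}$ according to $N(x)\cap N$. Then $e\ge 2+|R|+|R_{ab}|+e(G[R])$.
\end{itemize}
In the case $\delta=2$, any $x\in R_a$ and $y\in R_b$ lie in the same component of $G[R]$, because a common neighbour cannot be $v$, $a$ or $b$. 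If, say, $R_b=\emptyset\neq R_a$, there are two options. Either $ab\notin E(G)$, and then each $x\in R_a$ has a neighbour in $R_{ab}$ (its common neighbour with $b$). Or $ab\in E(G)$, and then $d(a)=n-1$, contradicting $\Delta\le n-2$. Hence at most one component of $G[R]$ misses $R_{ab}$. Therefore $e(G[R])\ge |R|-|R_{ab}|-1$, and $e\ge 2|R|+1=2n-5$.
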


\begin{theorem}\label{T16} For $r \geq 4$ and $n$ sufficiently large, we have $ ssat^{\Delta}(n,K_r) =  (r-1)n-\frac{r(r-1)}{2}$ for any  $n-1-\lfloor \frac{n-r}{r}\rfloor \leq \Delta \leq n-2$ except for $(r,\Delta) =(4,n-2)$, in which we have $ ssat^{\Delta}(n,K_r) =  (r-1)n-\frac{r(r-1)}{2}-1$.

\end{theorem}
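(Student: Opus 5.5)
Theorem~\ref{T16} needs an upper bound, given by a construction, and a matching lower bound, given by a structural counting argument.

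\textbf{Upper bound.} We look for a $K_r$-semi-saturated graph with $(r-1)n-\binom{r}{2}$ edges and maximum degree at most $n-1-\lfloor\frac{n-r}{r}\rfloor$. The natural candidate starts from a clique $K=K_r$ on vertices $v_1,\dots,v_r$. Partition the remaining $n-r$ vertices into $r$ classes $S_1,\dots,S_r$ of sizes as equal as possible. Join every vertex of $S_i$ to all of $K\setminus\{v_i\}$. This gives
\[
\binom{r}{2}+(r-1)(n-r)=(r-1)n-\binom{r}{2}
\]
edges. A vertex $v_i$ misses exactly the class $S_i$, so its degree is $n-1-|S_i|\le n-1-\lfloor\frac{n-r}{r}\rfloor$. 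Vertices outside $K$ have degree $r-1$.

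We then check semi-saturation:
\begin{itemize}
\item A nonedge $xv_i$ with $x\in S_i$ creates the new clique $\{x\}\cup K$, which is a copy of $K_{r+1}\supseteq K_r$.
\item A nonedge $xy$ with $x\in S_i$, $y\in S_j$ and $i\ne j$ creates the clique $\{x,y\}\cup K\setminus\{v_i,v_j\}$, which has $r$ vertices.
\item A nonedge $xy$ with $x,y\in S_i$ creates $\{x,y\}\cup K\setminus\{v_i,v_j\}$ for any $j\neq i$, using $r-2$ common neighbours. For this we only need $r-2$ vertices of $K\setminus\{v_i\}$ adjacent to both $x$ and $y$, which holds since both see all of $K\setminus\{v_i\}$.
\end{itemize}
For smaller $\Delta$ in the range nothing more is needed. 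Since $\Delta\le n-2$ is always allowed, the same construction works throughout the stated range.

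For $(r,\Delta)=(4,n-2)$ we need one edge fewer. I would look for a sporadic modification: remove one edge inside $K$, say $v_1v_2$, and rebalance by letting a single special vertex or class handle the pair. This mimics the $K_3$ extremal graph with $2n-5$ edges from Theorem~\ref{T153}. Concretely, I would try the join of $K_1$ with an $ssat^{n-3}(n-1,K_3)$-type graph, adjusted to keep degrees at most $n-2$, and verify the edge count $3n-7$.

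\textbf{Lower bound.} This is the main obstacle. Let $G$ be $K_r$-semi-saturated with $\Delta(G)\le n-2$. We want
\[
e(G)\ge (r-1)n-\binom{r}{2}.
\]
Without the degree restriction the bound $(r-2)n-\binom{r-2}{2}$ is achieved by $K_{r-2}\vee \overline{K}_{n-r+2}$, which needs dominating vertices. So the degree restriction must force roughly one extra unit of degree per vertex.

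The plan is as follows:
\begin{enumerate}
\item Let $L$ be the set of low-degree vertices, those of degree at most $r-2$. For a vertex $x$ of degree at most $r-2$ and any nonneighbour $y$, the new $K_r$ must contain $x$, so $N(x)$ is a clique of size exactly $r-2$ inside $N(y)$. Hence every nonneighbour of $x$ is adjacent to all of $N(x)$.
\item Since $\Delta\le n-2$, every vertex of $N(x)$ has a nonneighbour $z$. Applying semi-saturation to the pair $(z,w)$ with $w\in N(x)$ gives the extra structure we want to exploit.
\item Use a discharging or weighting argument: assign each vertex its degree, and show that the average degree is at least $2(r-1)$ up to an additive constant. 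Vertices of degree at most $r-2$ must cluster around cliques $Q$ of size $r-2$ whose members are forced to have nonneighbours, which pushes degrees up.
\end{enumerate}

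An alternative route is to invoke the hypergraph machinery behind Theorems~\ref{T14} and~\ref{T15}, since $A_{r-2}(c)=r-1$ near $c=1$. That route only gives asymptotic information, so the exact constant $\binom{r}{2}$ and the $r=4$ exception must come from a careful local analysis. I expect this analysis to be the hard part. It requires a stability step showing that a near-extremal $G$ contains a $K_r$ core with almost all other vertices attached to $r-1$ of its vertices, followed by exact counting of the exceptions. The case $r=4$ is special because $r-2=2$, and there a triangle-type sporadic structure saves one edge.
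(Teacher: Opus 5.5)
Your generic upper bound is the paper's construction ($K_r$ on $B$, each class $P_i$ joined to $B\setminus\{b_i\}$), and your semi-saturation check is correct. Beyond that there are genuine gaps.

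\textbf{The exceptional case.} For $(r,\Delta)=(4,n-2)$ you do not actually produce a graph. Any graph of the form $K_1\vee H$ on $n$ vertices has a vertex of degree $n-1$, and ``adjusted to keep degrees at most $n-2$'' is not a construction. The paper uses $G^*=(K_2\cup K_1)\vee(\overline{K_{n-5}}\cup K_2)$. It has $3n-7$ edges and maximum degree $n-2$, and is easily checked to be $K_4$-semi-saturated.

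\textbf{The lower bound.} Write $T=(r-1)n-\binom r2$. The bound $e(G)\ge T$ (resp.\ $T-1$) is the substance of the theorem, and it is not proved. Your sketch also aims at the wrong target. Your steps 1 and 2 already show that if $d(x)\le r-2$, then every $w\in N(x)$ is adjacent to $x$, to the rest of the clique $N(x)$, and to every vertex outside $N[x]$. So $w$ would be conical, and under $\Delta\le n-2$ the set $L$ is empty, i.e.\ $\delta(G)\ge r-1$. All the difficulty lies with vertices of degree $r-1$ and $r$. An averaging, discharging or stability argument, or the hypergraph machinery (which gives only $(r-1)n-o(n)$), cannot by itself yield the exact constant $\binom r2$ or the $-1$. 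Your stability picture is also false for $r=4$: in $G^*$ the $n-5$ bulk vertices see only two vertices of the unique $K_4$.

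\textbf{The missing idea.} What is needed is an exact case analysis on $\delta(G)$ around a minimum-degree vertex $v$, with $W=V(G)\setminus N[v]$.
\begin{itemize}
\item If $\delta\ge 2r$, the bound is trivial.
\item If $r+1\le\delta<2r$, every $w\in W$ sends at least $r-2$ edges to $N(v)$ and at least $\delta-d_{N(v)}(w)$ edges into $W$. Hence $e(G)\ge(\delta+r-2)(n-\delta-1)/2\ge(r-\tfrac12)(n-2r)>T$.
\item If $\delta=r-1$, then $N(v)$ contains a clique $Q\cong K_{r-2}$, and one splits by the number $t$ of edges from the remaining vertex $u$ of $N(v)$ to $Q$.
  \begin{itemize}
  \item $t=0$ gives $e(G)\ge(r-1)(n-r+1)+(r-2)(r-3)$. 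This is at least $T$ exactly when $r\ge5$ and equals $T-1$ when $r=4$, which is the source of the exception. Here every vertex of $Q$ has degree exactly $n-2$, so for $r=4$ and $\Delta\le n-3$ the bound $T$ persists.
  \item $1\le t\le r-3$ forces a conical vertex.
  \item $t=r-2$ gives $N(v)\cong K_{r-1}$. Vertices of $W$ missing different $b_i$ share only $r-3$ neighbours in $N(v)$, so they must be adjacent or have a common neighbour in $W$. This puts them all in one component of $G[W]$, giving $|\widetilde A|-1$ extra edges; the last edge comes from a leaf argument excluding the tree case.
  \end{itemize}
\item If $\delta=r$, at most $4r-8$ vertices of $W$ can have $r-1$ or more neighbours in $N(v)$, since otherwise $e(G)>T$ outright. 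The absence of conical vertices then yields a vertex $w$ with few neighbours in $W$ whose trace on $N(v)$ misses the common $(r-2)$-trace of a linear-size set of vertices of $W$. Each of these needs a common neighbour with $w$ in $W$, forcing $\Omega(n)$ extra edges.
\end{itemize}
As written, your proposal establishes only the upper bound for $(r,\Delta)\neq(4,n-2)$.
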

Recall that  $ssat_*^{n-1}(n,K_r)=ssat^{n-1}(n,K_r)=ssat(n,K_r)=sat(n,K_r)=(r-2)n-\frac{(r-2)(r-3)}{2}$ (\cite{EHM}). Combining with Theorem \ref{T16}, we immediately have that $c_0=1$ is a discontinuous point in Theorem~\ref{T13}.

A natural question is whether one can determine $sat(n,K_r \vee F)$ from the saturation number of $F$.
An easy observation is that $sat(n,K_r \vee F) \leq r(n-r)+sat(n-r,F)+ \binom{r}{2}$ for all $n > |V(F)| \, +r-1$, as an $F$-saturated graph $G$ on $n-r$ vertices can be extended to a $(K_r \vee F)$-saturated graph on $n$ vertices.
An important problem is to find all pairs $(r,F)$ for which the equality above holds.

\begin{prob}\label{P17} Determine all pairs $(r,F)$ such that $sat(n,K_r \vee F) = r(n-r)+sat(n-r,F)+ \binom{r}{2}$ holds for all sufficiently large $n$.
\end{prob}
\begin{table}[H]
	\centering
	\small
	\setlength{\tabcolsep}{6pt}
	\renewcommand{\arraystretch}{1.25}
	\caption{Some confirmed cases related to Problem~\ref{P17}.}
	\label{tab:P17-known}
	\begin{tabular}{@{}p{6.2cm}p{9.6cm}@{}}
		\hline
		\textbf{Reference} & \textbf{Verified case $(r,F)$} \\
		\hline
		Hu, Ji and Cui \cite{HU1} & $(1,P_k)$ for $k \geq 5$. \\
		Hu, Luo and Peng  \cite{HU} & $(1,F)$ where $F=K_2 \cup F'$ for some graph $F'$. \\
		Qiu, He, Lu and Xu \cite{Qiu2026} & $(1,C_k)$ for $k \geq 8$. \\
		Song, Hu, Ji and Cui \cite{Song} & $(1,C_4)$. \\
		Zhang, Cui, Hu, Yue and Ji \cite{Zhang2026} & $(1,F)$ where $F$ is a linear forest. \\
		Zhang, You and Zhao \cite{ZZY2025} & $(2,P_k)$ for $k \geq 3$. \\
		\hline
	\end{tabular}
\end{table}

Recently many authors have confirmed that the equality above holds for several specific $(r,F)$; see Table~\ref{tab:P17-known}.
Nevertheless, this problem is still far from being completely settled, and many cases remain open.
In this paper, by applying Theorem~\ref{T16}, we are able to establish the equality for a large class of pairs $(r,F)$. We call a vertex $v$ \emph{conical} if $d(v) = n-1$.

\begin{theorem}\label{T18} Fix integers $r \geq 1$ and $t \geq 2$. If $F$ is a graph without isolated vertex such that every edge of $F$ belongs to a $K_t$ and $sat(n,F) < (t-1)n - \binom{t}{2}-2$ for sufficiently large $n$, then $sat(n,K_r \vee F) = r(n-r)+sat(n-r,F)+ \binom{r}{2}$. Moreover, every extremal graph $G$ for $sat(n,K_r \vee F)$ contains $r$ conical vertices.
\end{theorem}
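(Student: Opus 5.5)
The plan is to prove the upper bound by the construction already described before Problem~\ref{P17}, and the lower bound by induction on $r$: show that every $(K_r\vee F)$-saturated graph $G$ on $n$ vertices with at most $r(n-r)+sat(n-r,F)+\binom r2$ edges has a conical vertex, delete it, and recurse. The base case $r=0$ is trivial. The conical vertices found along the induction also give the ``moreover'' statement.

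\textbf{Step 1 ($G$ is $K_{r+t}$-semi-saturated).} Let $s=r+t$ and let $e\notin E(G)$. Then $G+e$ contains a copy $H$ of $K_r\vee F$ that uses $e$; write $H$ as a $K_r$-part joined to an $F$-part. We show that $e$ lies in a $K_s$ inside $H$, by cases on where $e$ sits.
\begin{itemize}
\item If $e$ lies in the $F$-part, then $e$ lies in some $K_t$ of $F$ by hypothesis, and joining that $K_t$ with the $K_r$-part gives a $K_s$ through $e$.
\item If $e$ joins the $K_r$-part to a vertex $u$ of the $F$-part, then $u$ is not isolated in $F$, so $u$ lies on an edge of $F$ and hence in some $K_t$. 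Joining this $K_t$ with the $K_r$-part gives a $K_s$ containing $e$.
\item If $e$ lies inside the $K_r$-part, any $K_t$ of $F$ joined with the $K_r$-part works.
\end{itemize}
Hence $G$ is $K_s$-semi-saturated.

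\textbf{Step 2 (forcing a conical vertex).} Suppose $\Delta(G)\le n-2$. The function $ssat^{\Delta}$ is nonincreasing in $\Delta$, so $e(G)\ge ssat^{n-2}(n,K_s)$. When $s\ge4$, Theorem~\ref{T16} gives $e(G)\ge (s-1)n-\binom s2-1$, where the $-1$ covers the exceptional case. When $s=3$, i.e.\ $r=1,t=2$, Theorem~\ref{T153} gives $e(G)\ge 2n-5$.

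For the upper bound, the hypothesis $sat(n-r,F)<(t-1)(n-r)-\binom t2-2$ and the identity $r^2-r-\binom r2=\binom r2$ give
\[
r(n-r)+\binom r2+sat(n-r,F)<(s-1)n-\binom r2-rt-\binom t2-2=(s-1)n-\binom s2-2 .
\]
This is below both lower bounds (for $s=3$ it equals $2n-5$), a contradiction. So any $G$ with at most the claimed number of edges has a conical vertex $v$.

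\textbf{Step 3 (deleting $v$).} We show $G-v$ is $(K_{r-1}\vee F)$-saturated on $n-1$ vertices.
\begin{itemize}
\item \emph{Freeness.} A copy of $K_{r-1}\vee F$ in $G-v$, together with the universal vertex $v$, would give a copy of $K_r\vee F$ in $G$, which is impossible.
\item \emph{Saturation.} Let $e$ be a nonedge of $G-v$, and take a copy $H$ of $K_r\vee F$ in $G+e$ using $e$. If $v\notin H$, pick any vertex $w\in V(H)$ not incident to $e$; one exists since $|H|\ge r+2\ge3$. Replace $w$ by $v$: since $v$ is adjacent to everything, this is still a copy, and it still contains $e$. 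So we may assume $v\in H$. Because $v$ is universal, we can relabel so that $v$ plays the role of a vertex of the $K_r$-part. Then $H-v$ is a copy of $K_{r-1}\vee F$ in $(G-v)+e$ containing $e$.
\end{itemize}

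\textbf{Step 4 (induction).} We get $e(G)=(n-1)+e(G-v)\ge (n-1)+sat(n-1,K_{r-1}\vee F)$. Applying the induction hypothesis at $n-1$, which is still large, yields
\[
e(G)\ge (n-1)+(r-1)(n-r)+\binom{r-1}2+sat(n-r,F)=r(n-r)+\binom r2+sat(n-r,F),
\]
using $(n-1)-(n-r)+\binom{r-1}2=(r-1)+\binom{r-1}2=\binom r2$. This matches the upper bound. For an extremal $G$, the induction hypothesis gives $r-1$ conical vertices in $G-v$, and these remain conical in $G$; together with $v$ this gives $r$ conical vertices.

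The main obstacle is Step~2: checking the arithmetic so that the strict hypothesis $sat(n,F)<(t-1)n-\binom t2-2$ absorbs both the $-1$ exception $(r+t,\Delta)=(4,n-2)$ in Theorem~\ref{T16} and the $K_3$ case of Theorem~\ref{T153}, which is needed when $r=1,t=2$.
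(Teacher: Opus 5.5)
Your proof is correct and follows essentially the same route as the paper. Both arguments show the graph is $K_{r+t}$-semi-saturated, use Theorems~\ref{T16} and~\ref{T153} at $\Delta=n-2$ to force a conical vertex, delete it (the paper's Lemma~\ref{clm:T18-conical}), and iterate; your induction on $r$ with the stronger hypothesis (every saturated graph with at most the target number of edges has a conical vertex) is a slightly cleaner way to make the paper's ``repeat the process'' step rigorous.
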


The results in \cite{HU1,HU,Zhang2026,ZZY2025} are immediate corollaries of Theorem \ref{T18} due to the known saturation numbers of paths, linear forests, etc. Moreover, the equality in Problem \ref{P17} holds for all $(r,F)$ where $F$ is a linear forest without isolated vertices, by applying Theorem \ref{T18} taking $t=2$.
For every $t \geq 2$, we construct a graph $F$ such that the condition of Theorem~\ref{T18} holds.

	The remainder of this paper is organized as follows. In Section~2, we present notation and preliminary results and prove Theorems~\ref{T13} and~\ref{T14}. In Section~3, we prove Theorem~\ref{T15}. In Section~4, we prove Theorems~\ref{T16} and~\ref{T18}.

\section{On $k$-intersecting hypergraphs}

In this section we present some notation and preliminary results for the proofs of Theorems \ref{T13} and \ref{T14}.
The main idea of the proofs of these results comes from Alon, Erd\H{o}s, Holzman and Krivelevich \cite{Alon}.

The construction below shows that there exists a $k$-intersecting hypergraph $\mathcal{H}$ such that the constraint system in \eqref{eq:1.3} is feasible.
\vspace{0.5em}

\noindent\textbf{Construction \cite{Alon}.}
Fix a positive integer $k$ and a real number $c>0$. Let $q$ be a prime power such that $c \geq (q+1)/(q^2+q+1)$.
Let $V$ consist of $k$ copies of two disjoint sets of size $q^2+q+1$, namely
$A_1,B_1,\ldots,A_k,B_k$.
Identify the vertices of $A_i$ with the points of the projective plane $PG(2,q)$, and identify
the vertices of $B_i$ with the lines of $PG(2,q)$.
Define a hypergraph $\mathcal{H}$ on $V$ as follows: for each line $\ell$ of $PG(2,q)$, let $H_\ell$
be the hyperedge consisting of all points of $\ell$ in each $A_i$ together with the singleton $\{\ell\}$ in
each $B_i$.
\vspace{0.5em}

Clearly, $\mathcal{H}$ has $q^2+q+1$ hyperedges and each hyperedge has size $|H_\ell|=k(q+1)+k=k(q+2)$, $\mathcal{H}$ is $k$-intersecting, and setting $y_i=1/(q^2+q+1)$ for every hyperedge $H_\ell$
yields a feasible solution of \eqref{eq:1.3}.
The construction implies that $A_k(c)$ is well-defined for all $c>0$. We now present some basic properties of $A_k(c)$ for all $c\in (0,1]$.

\begin{lemma}\label{L21}
$A_k(c) \leq 2k(1+1/c)$ for all $0<c\le 1$.
\end{lemma}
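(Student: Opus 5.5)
We plan to prove the bound directly from the projective-plane Construction above. Since $A_k(c)$ is an infimum, it suffices to exhibit one $k$-intersecting hypergraph $\mathcal{H}$ with $1/\nu^*(\mathcal{H})\le c$ and $a(\mathcal{H},c)\le 2k(1+1/c)$. The Construction is $k(q+2)$-uniform. So, as noted after \eqref{eq:1.4}, once it is feasible we get $a(\mathcal{H},c)=k(q+2)$. The task therefore reduces to choosing a prime power $q$ with $(q+1)/(q^2+q+1)\le c$ and $k(q+2)\le 2k(1+1/c)$, i.e. $q\le 2/c$.

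First we confirm feasibility. Give each of the $q^2+q+1$ hyperedges weight $1/(q^2+q+1)$; then $\sum_i y_i=1$. Each point of $A_i$ lies on $q+1$ lines, and each vertex $\ell\in B_i$ lies in exactly one hyperedge. So the load at any vertex is at most $(q+1)/(q^2+q+1)\le c$, and hence $1/\nu^*(\mathcal{H})\le c$. The $k$-intersecting property holds because two distinct lines meet in exactly one point, and this point appears in each of $A_1,\dots,A_k$.

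Next we choose $q$, which is the only real step. The condition $(q+1)/(q^2+q+1)\le c$ holds whenever $q\ge 1/c$, since $(q+1)/(q^2+q+1)\le 1/q$ follows from $q^2+q\le q^2+q+1$. We would pick $q$ to be a power of $2$ with $1/c\le q<2/c$; such a power exists because consecutive powers of $2$ differ by a factor of $2$. If $1/c\le 1$, i.e. $c=1$, we take $q=2$, the smallest allowed prime power. Then $(q+1)/(q^2+q+1)=3/7\le 1$ and $k(q+2)=4k\le 2k(1+1)=4k$, so this case is fine. For $c<1$, the power of $2$ with $1/c\le q<2/c$ satisfies $q\ge 2$, so it is a valid prime power. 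This gives $k(q+2)<k(2/c+2)=2k(1+1/c)$.

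Combining the two steps, $A_k(c)\le a(\mathcal{H},c)=k(q+2)\le 2k(1+1/c)$. The only point needing care is the choice of $q$ near $c=1$, which we have handled. The rest is routine.
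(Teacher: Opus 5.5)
Your proof is correct and follows the paper's argument: both use the projective-plane construction with order $q\in[1/c,2/c]$, and both use $k(q+2)$-uniformity to get $A_k(c)\le a(\mathcal{H},c)=k(q+2)\le 2k(1+1/c)$. The only difference is the choice of $q$: you take a power of $2$ (treating $c=1$ separately with $q=2$), whereas the paper takes a prime in $[1/c,2/c]$, which implicitly relies on Bertrand's postulate; your choice is slightly more elementary, and you also check the feasibility condition $(q+1)/(q^2+q+1)\le c$ that the paper leaves implicit.
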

\noindent\textbf{Proof. }Let $p$ be a prime satisfying $1/c \leq p \leq 2/c$. Taking the hypergraph $\mathcal{H}$ in the construction, we have $A_k(c) \leq a(\mathcal{H},c) = k(p+2) \leq k(2+2/c) = 2k(1+1/c) $. \q
\vspace{0.5em}

\begin{lemma}\label{L22}
	The  infimum of $A_k(c)$ in the definition may be achieved by a hypergraph with at most $2k(1/c+1/c^2)+1$ hyperedges  for all $0<c\le 1$.
\end{lemma}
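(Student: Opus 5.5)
Following \cite{Alon}, the plan is to start from an arbitrary $k$-intersecting hypergraph $\mathcal{H}$ with $1/\nu^*(\mathcal{H})\le c$ and an optimal solution $y$ of \eqref{eq:1.3}, and to pass to a basic optimal solution. The program \eqref{eq:1.3} has $m$ variables, one equality constraint and $|V|$ inequality constraints. A vertex of the feasible polytope is therefore determined by $m$ tight constraints, and its support is small only if few vertex constraints are tight. So counting tight constraints alone does not bound the support; I will instead bound the support size directly using the intersecting property together with the objective bound from Lemma~\ref{L21}.

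\textbf{Step 1 (restrict to the support).} Let $y$ be optimal and let $\mathcal{H}'=\{H_i: y_i>0\}$. Then $\mathcal{H}'$ is still $k$-intersecting. The restriction of $y$ is feasible for $\mathcal{H}'$, so $1/\nu^*(\mathcal{H}')\le c$ and $a(\mathcal{H}',c)\le a(\mathcal{H},c)$. It therefore suffices to bound $|\mathcal{H}'|$ for a suitably chosen optimal $y$. Taking the infimum defining $A_k(c)$ over near-optimal $\mathcal{H}$ also requires care; see the remark at the end.

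\textbf{Step 2 (bounding the support via intersection).} Fix a hyperedge $H_j\in\mathcal{H}'$. Every other hyperedge $H_i$ meets $H_j$ in at least $k$ vertices, so
\[
k\sum_{i\ne j} y_i\;\le\; \sum_{i\ne j}|H_i\cap H_j|\,y_i\;\le\;\sum_{x\in H_j}\sum_{i:x\in H_i} y_i \;\le\; c|H_j| .
\]
Hence $k(1-y_j)\le c|H_j|$. The objective gives $\sum |H_i|y_i\le A_k(c)+\varepsilon\le 2k(1+1/c)+\varepsilon$ by Lemma~\ref{L21}. The plan is to combine these two facts:
\[
k\sum_j y_j(1-y_j)\le c\sum_j |H_j|y_j \le c\cdot 2k(1+1/c),
\]
so $\sum_j y_j^2\ge 1-2(c+1)$. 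This is useless when $c$ is not small, so a cleaner route is needed. I would instead take $y$ to be a vertex of the polytope. Its support then has size at most $1+\#\{\text{tight vertex constraints}\}$, and linear independence allows choosing a set $T$ of tight vertices with $|T|\ge |\mathcal{H}'|-1$. For each tight $x$ we have $\sum_{i\ni x}y_i=c$, so summing over $T$ gives
\[
c|T|\le \sum_i |H_i\cap T|y_i\le \sum_i |H_i|y_i\le 2k(1+1/c).
\]
This yields $|T|\le 2k(1/c+1/c^2)$ and hence $|\mathcal{H}'|\le 2k(1/c+1/c^2)+1$, exactly the claimed bound. Before this, I would reduce to vertex solutions: the optimum of a bounded feasible LP is attained at a vertex, and the support size of a vertex is at most the number of linearly independent tight constraints among the equality constraint and the vertex constraints.

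\textbf{Step 3 (attaining the infimum).} The bound in Step 2 shows that $A_k(c)$ equals the infimum over hypergraphs with at most $M=2k(1/c+1/c^2)+1$ hyperedges. Among hypergraphs with at most $M$ edges, only finitely many combinatorial types matter. Vertices lying in the same set of edges can be merged into a single vertex; this keeps the $k$-intersecting property only if multiplicities are tracked, so I encode each hypergraph by the counts of vertices of each of the at most $2^M$ membership types. Edge sizes enter the objective, so they are not bounded a priori. However, the objective bound $\sum|H_i|y_i\le 2k(1+1/c)$ together with $y_i>0$ does not bound $|H_i|$ when $y_i$ is tiny. \emph{This is the main obstacle.} I would handle it by showing that $\min_y$ over a fixed type is continuous in the type counts, and that large counts only increase the cost unless the weights are tiny. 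One then passes to a limit or a compactness argument over the finitely many supports, giving an LP over the type-count vector. If this fails, I would fall back on interpreting the statement as ``the infimum may be taken over such hypergraphs'', which Step 2 already establishes.
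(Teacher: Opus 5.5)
Your Step 2 (take $y$ at a vertex of the feasible polytope, note that at least $|\mathcal{H}'|-1$ vertex constraints are tight, bound their number via $c|T|\le\sum_i|H_i|y_i\le 2k(1+1/c)$, then discard the zero-weight hyperedges) is exactly the paper's proof. The paper starts directly from some $\mathcal{H}$ with $a(\mathcal{H},c)\le 2k(1+1/c)$, which exists by Lemma~\ref{L21}, rather than from an $A_k(c)+\varepsilon$ bound. It also means the lemma only in your fallback sense: the infimum may be restricted to such hypergraphs.

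Actual attainment is Lemma~\ref{L23}, and there your ``main obstacle'' disappears without any continuity or compactness argument. After passing to separated hypergraphs and blow-up vectors, the feasible region of \eqref{eq:1.3} does not depend on the multiplicities, and the objective is nondecreasing in them. The set $\mathscr{B}(\mathcal{H})$ is up-closed and has only finitely many minimal elements, so only those need to be checked.
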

\noindent\textbf{Proof. }Let $\mathcal{H}=\{H_1,\ldots,H_m\}$ be a $k$-intersecting hypergraph on the vertex set $V$ such that $a(\mathcal{H},c) \leq  2k(1+1/c)$ by Lemma \ref{L21}.
The system of restrictions in \eqref{eq:1.3} defines a convex, bounded and non-empty polytope $P$ in $\mathbb{R}^m$.
Hence the objective function of \eqref{eq:1.3} attains its minimum at a vertex $u$ of $P$.
Since $u$ is a vertex of $P$, it must be the intersection of at least $m$ hyperplanes corresponding to the constraints in \eqref{eq:1.3} which are of 3 types:
\begin{enumerate}
	\item $ \sum_{i:x \in H_i} y_i = c$ for some $x \in V$,
	\item $y_i =0$ for some $i \in [m]$,
	\item $\sum_{i=1}^m y_i = 1$.
\end{enumerate}
Since
\begin{equation*}
	\sum_{x \in V} \sum_{i:x \in H_i} y_i = \sum_{i=1}^m |H_i| y_i = a(\mathcal{H},c) \leq 2k(1+1/c)\,,
\end{equation*}
at most $2k(1/c+1/c^2)$ hyperplanes of type (i) contain $u$.
Therefore there are at least $m-2k(1/c+1/c^2)-1$ values of $i$ such that $y_i=0$ occurs at $u$.
Let $\mathcal{H}'$ be the hypergraph obtained from $\mathcal{H}$ by removing the hyperedges $H_i$ for which $y_i=0$ at $u$. Clearly $\mathcal{H}'$ is $k$-intersecting, $a(\mathcal{H}',c) = a(\mathcal{H},c) $ and $|\mathcal{H}'|\, \leq 2k(1/c+1/c^2)+1$. \q
\vspace{0.5em}

A hypergraph $\mathcal{H}=(V,\mathcal{E})$ is called \emph{separated} if for any $x \neq y \in V$, there exists a hyperedge $H \in \mathcal{E}$ such that $|H \cap \{x,y\}| \, = 1$.
It is obvious that for any $x \neq y \in V$, the sets of hyperedges containing $x$ and $y$, respectively, are different, and therefore $|V| \, \leq 2^{|\mathcal{E}|}$ if $\mathcal{H}=(V,\mathcal{E})$ is separated.
For any hypergraph $\mathcal{H}'$, there exists a separated hypergraph $\mathcal{H}$ obtained from $\mathcal{H}'$ by identifying the vertices.
Clearly $\mathcal{H}$ has the same number of hyperedges and the same fractional matching number as $\mathcal{H}'$.
If $V(\mathcal{H})=\{x_1,\ldots,x_r\}$ and $x_i$ is obtained by identifying $a_i$ vertices of $\mathcal{H}'$, we say that $\mathcal{H}'$ is an $(a_1,\ldots,a_r)$-\textit{blow-up} of $\mathcal{H}$. Let
\begin{equation*}
	\mathscr{B}(\mathcal{H}) = \{\,(a_1,\ldots,a_r) \in \mathbb{N}^r : \text{the $(a_1,\ldots,a_r)$-blow-up of $\mathcal{H}$ is $k$-intersecting\,} \}\,.
\end{equation*}
For given $c>0$, define a family of hypergraphs by
\begin{equation*}
	\mathscr{H}(c) = \{\text{$\mathcal{H}$ is separated : $\nu^*(\mathcal{H}) \geq 1/c$, $|\mathcal{E}(\mathcal{H})|\, \leq 2k(1/c+1/c^2)+1$, $\mathscr{B}(\mathcal{H}) \neq \emptyset$\,} \}\,.
\end{equation*}
By the construction, $\mathscr{H}(c) \neq \emptyset$.

\begin{lemma}\label{L23}
	The infimum of $A_k(c)$ in the definition is attained  for all $0<c\le 1$.
\end{lemma}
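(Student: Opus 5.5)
The plan is to reduce the infimum defining $A_k(c)$ to a minimum over a finite set of candidates, each of which is an attained LP optimum. By Lemma~\ref{L22} we may restrict to $k$-intersecting hypergraphs $\mathcal{H}'$ with $1/\nu^*(\mathcal{H}')\le c$ and at most $M:=\lfloor 2k(1/c+1/c^2)+1\rfloor$ hyperedges. Each such $\mathcal{H}'$ is a blow-up of a separated hypergraph $\mathcal{H}$, obtained by identifying vertices that lie in exactly the same hyperedges. This $\mathcal{H}$ has the same hyperedge count and the same $\nu^*$, so it lies in $\mathscr{H}(c)$. Since a separated hypergraph with $m\le M$ hyperedges has at most $2^m$ vertices, $\mathscr{H}(c)$ is finite up to isomorphism. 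It therefore suffices to show that, for each fixed $\mathcal{H}\in\mathscr{H}(c)$, the quantity $\inf\{a(\mathcal{H}',c)\}$ over blow-ups $\mathcal{H}'$ of $\mathcal{H}$ lying in $\mathscr{B}(\mathcal{H})$ is attained.

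Fix $\mathcal{H}=\{H_1,\dots,H_m\}$ on $\{x_1,\dots,x_r\}$, and let $\mathcal{H}'$ be its $(a_1,\dots,a_r)$-blow-up. The constraints of \eqref{eq:1.3} for $\mathcal{H}'$ coincide with those for $\mathcal{H}$, because all copies of $x_j$ give the same inequality. So the feasible polytope $P\subseteq\mathbb{R}^m$ is independent of $a$, and it is nonempty since $\nu^*(\mathcal{H})\ge 1/c$. The objective becomes
\[
\sum_i |H_i'|\,y_i=\sum_{j=1}^r a_j\,w_j(y),\qquad w_j(y):=\sum_{i:\,x_j\in H_i} y_i\ \ge 0 .
\]
The condition $a\in\mathscr{B}(\mathcal{H})$ is that $\sum_{j:\,x_j\in H_i\cap H_{i'}} a_j\ge k$ for all $i\ne i'$, together with nonemptiness of each hyperedge. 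These conditions define an up-closed subset of $\mathbb{N}^r$.

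Next I would apply Dickson's lemma: $\mathscr{B}(\mathcal{H})$ has only finitely many minimal elements $a^{(1)},\dots,a^{(s)}$. Since $w_j\ge 0$ on $P$, decreasing $a$ coordinatewise never increases $\sum_j a_j w_j(y)$ for any $y$. Hence for every $a\in\mathscr{B}(\mathcal{H})$ there is a minimal $a^{(t)}\le a$ with $a(\text{blow-up by }a^{(t)},c)\le a(\text{blow-up by }a,c)$, and the infimum over $\mathscr{B}(\mathcal{H})$ equals $\min_t a(\text{blow-up by }a^{(t)},c)$. Each of these values is the optimum of a linear function over the compact nonempty polytope $P$, so it is attained.

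Combining over the finitely many $\mathcal{H}\in\mathscr{H}(c)$ exhibits a $k$-intersecting hypergraph realizing $A_k(c)$. The main care point is the reduction step: one must check that passing from $\mathcal{H}'$ to its separated quotient and back preserves $k$-intersection, the feasibility condition $1/\nu^*\le c$, and the LP value exactly. It should also be noted that coordinates $a_j=0$ are harmless; they correspond to deleting a vertex, and the intersection and nonemptiness constraints still guarantee that the blow-up is a valid $k$-intersecting hypergraph.
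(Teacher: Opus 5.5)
Your proposal is correct and follows the paper's proof. Both reduce via Lemma~\ref{L22} to boundedly many hyperedges, pass to the finitely many separated quotients, and use that $\mathscr{B}(\mathcal{H})$ is an up-set in $\mathbb{N}^r$ with finitely many minimal elements (Dickson's lemma, i.e.\ no infinite antichains). Your argument that the LP value is coordinatewise nondecreasing in $(a_1,\ldots,a_r)$, because the objective coefficients only grow and the feasible region only shrinks, is the step the paper's last sentence leaves implicit. It also covers coordinates $a_j=0$: there the polytope is not literally independent of $a$ as you claimed, but it only becomes larger, which still gives the monotonicity you need.
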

\noindent\textbf{Proof. }By Lemma~\ref{L22}, we may restrict attention to hypergraphs with at most $2k(1/c+1/c^2)+1$ hyperedges.
Let the family of such hypergraphs be $\mathscr{H}_1(c)$.
Let $\mathscr{H}_2(c)$ be the family obtained from $\mathscr{H}_1(c)$ by identifying vertices to obtain separated hypergraphs.
Then every $\mathcal{H}\in \mathscr{H}_1(c)$ is an $(a_1,\ldots,a_r)$-blow-up of some $\mathcal{H}'\in \mathscr{H}_2(c)$ for some $r \leq r^* := 2^{2k(1/c+1/c^2)+1}$.
Therefore $\mathscr{H}_2(c) \subseteq \mathscr{H}^{r^*}(c)$, where $\mathscr{H}^{r^*}(c) \subseteq \mathscr{H}(c)$ is the family of separated hypergraphs with at most $r^*$ vertices.
Rewrite \eqref{eq:1.3} as
\begin{equation*}
	\begin{aligned}
		A_k(c)
		=
		\min_{\mathcal{H}\in \mathscr{H}^{r^*}(c)} &
		\inf_{(a_1,\ldots,a_r)\in \mathscr{B}(\mathcal{H})}  &  &
		\min \sum_{i=1}^{m}  \; \left(\sum_{x_j\in H_i} a_j\right)y_i \\
		\mathbf{s.t.}  \quad  \sum_{i:x_j \in H_i} y_i
		& \leq c \, , & & \forall \, x_j \in V(\mathcal{H}')  \, , \\
		y_i &  \geq 0 \, , &  & \forall \, 1 \leq i \leq m   \, , \\
		\sum_{i=1}^{m} y_i & =1 \,,
	\end{aligned}
\end{equation*}where $m$ is the size of $\mathcal{H}$ with $m\le 2k(1/c+1/c^2)+1$.
Let the natural partial order $\prec$ on $\mathbb{N}^r$ be defined by $(a_1,\ldots,a_r) \prec (b_1,\ldots,b_r)$ if and only if $a_i \leq b_i$ for every $i \in [r]$.
As the poset $(\mathbb{N}^r, \prec)$ has no infinite antichain for all $r \leq r^*$, the minimal elements of $\mathscr{B}(\mathcal{H})$ are finite and the infimum is attained. \q
\vspace{0.5em}

\noindent\textbf{Proof of Theorem~\ref{T13}. }By Lemma~\ref{L23}, for every fixed $c^*>0$ the value of $A_k(c)$ on $[c^*,1]$ is determined by a finite number of blow-ups of finitely many separated hypergraphs.
Therefore $A_k(c)$ is the minimum of finitely many functions $a(\mathcal{H},c)$ on $[c^*,1]$.
By Proposition~\ref{P12}, $A_k(c)$ is monotone nonincreasing, piecewise linear and has only possible left-discontinuities in $[c^*,1]$ of the form $1/\nu^*(\mathcal{H})$ for some hypergraph $\mathcal{H}$ from this finite collection; these points are all rational. \q
\vspace{0.5em}

A hypergraph $\mathcal{S}$ with hyperedge set $\mathcal{E}$ is a \emph{sunflower} if
\[
E\cap E'=\bigcap_{F\in\mathcal{E}} F
\quad\text{for all distinct } E,E'\in\mathcal{E}\,;
\]
the sets $E\setminus \bigcap_{F\in\mathcal{E}} F$ are called the \emph{petals}.

\begin{lemma}\label{L24}
Fix an integer $r \geq 4$ and a real number $C>0$. Then there exists $n_0$ such that for every $n > n_0$, if $G = (V,E)$ is $K_r$-semi-saturated on $n$ vertices with $e(G) \le Cn$, then there is a set $V_0 \subseteq V$ with the following properties.
\begin{enumerate}
\item[(a)] $|V_0| \le (2C+1)n/\log\log n$.
\item[(b)]  Let $\mathcal{H}$ be the hypergraph on vertex set $V_0$ whose hyperedge multiset is $\{H(x): x\in V\setminus V_0\}$ (each $x$ contributes one hyperedge), where $H(x):=N_G(x)\cap V_0$. Then $\mathcal{H}$ is $(r-2)$-intersecting.

\end{enumerate}
\end{lemma}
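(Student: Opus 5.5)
The plan is to build $V_0$ from vertices of large degree, chosen through a pigeonhole over degree scales, and then to argue that every pair of vertices left outside $V_0$ has a witnessing $(r-2)$-clique lying inside $V_0$. This follows the scheme of Alon, Erd\H{o}s, Holzman and Krivelevich~\cite{Alon}.

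\textbf{Step 1: choosing a degree scale.} Define a rapidly increasing sequence of thresholds, say $d_0=r$ and $d_{j+1}=2^{2^{d_j}}$ or a similar tower-type recursion. Let $J$ be the number of scales with $d_J\le \log n$; this makes $J$ at least of order $\log\log n$, and the recursion can be tuned so that $J\ge \log\log n$. The degree bands $D_j=\{v: d_j\le d(v)<d_{j+1}\}$ for $j<J$ are disjoint, so some index $i$ satisfies $|D_i|\le n/J\le n/\log\log n$. Since $e(G)\le Cn$, the number of vertices of degree at least $d_{i+1}$ is at most $2Cn/d_{i+1}\le 2Cn/\log\log n$. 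Put
\[
V_0=\{v: d(v)\ge d_{i+1}\}\cup D_i ,
\]
plus possibly a further exceptional set $X$ of size $o(n/\log\log n)$, to be defined in Step 3. Then (a) holds. Every $x\notin V_0$ is \emph{low}, meaning $d(x)<d_i$.

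\textbf{Step 2: nonadjacent low pairs whose clique avoids low vertices.} Let $x,y\notin V_0$ with $xy\notin E$. Semi-saturation gives an $(r-2)$-clique $T\subseteq N(x)\cap N(y)$, so $T\cup\{x,y\}$ induces $K_r$ in $G+xy$. If $T\subseteq V_0$, then $|H(x)\cap H(y)|\ge r-2$, as required. So the only bad events are (i) $xy\in E$, and (ii) every witnessing clique $T$ contains a low vertex $z$. In case (ii), $x$ and $y$ lie at distance at most $2$ inside the graph induced on low vertices. Each low vertex has fewer than $d_i^2$ such ``low-close'' partners.

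\textbf{Step 3: removing the bad configurations (main obstacle).} This is the step I expect to be hardest. The bad pairs have bounded ``degree'' $d_i^2$, but the exceptional set must have size $o(n/\log\log n)$, so a greedy independent-set argument is far too lossy. The gap between $d_i$ and $d_{i+1}$ has to do the work, and I would argue as follows.

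Fix a low vertex $x$. It must have a witnessing clique with every one of the $n-1-d(x)$ vertices not adjacent to it, and all these cliques lie in $N(x)$, which has fewer than $d_i$ vertices. Any low vertex of $N(x)$ has degree below $d_i$, so cliques containing it serve at most $d_i$ partners. Hence the cliques of $N(x)$ contained in $V_0$ must already serve all but at most $d_i^2$ vertices.

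From this I would show that $x$ has a set $T_x\subseteq N(x)\cap V_0$ of size at least $r-2$ that serves almost everything. Then, using the band gap, I would show that low vertices with a low neighbor, or with a low common neighbor that is essential to some witness, are rare. The intended counting is to charge each such vertex to the high-degree sets $T_x$, whose total degree is at most $2Cn$, and derive a bound like $O(n d_i^2/d_{i+1})$ for the number of these low vertices. The tower growth of the $d_j$ makes this bound negligible compared with $n/\log\log n$. Those vertices form $X$ and are added to $V_0$.

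Adding $X$ does not break Step 2. Adding vertices to $V_0$ only enlarges the sets $H(x)$, and it removes the problematic low vertices from the pairs under consideration. After this step, every remaining pair has its witnessing clique inside $V_0$, and it remains only to check that the size bound in (a) is preserved.
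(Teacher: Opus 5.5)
Your proposal has a genuine gap in Step 3, which is where the whole content of the lemma lies.

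The local count there is fine: for a fixed low $x$, all but fewer than $d_i^2$ of its non-neighbours are served by cliques inside $N(x)\cap V_0$. But this only says that the graph of bad pairs has maximum degree below $d_i^2$. What you need is that only $o(n/\log\log n)$ low vertices have \emph{any} bad partner. No local count gives this, and the charging bound $O(nd_i^2/d_{i+1})$ is asserted with no mechanism behind it.

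The concrete sub-claims you offer do not hold:
\begin{itemize}
\item Low vertices with a low neighbour need not be rare. Add a near-perfect matching on the low vertices of Construction~(I) in the appendix. Adding edges preserves $K_r$-semi-saturation and keeps $e(G)=O(n)$, and now every low vertex has a low neighbour, yet no pair is bad.
\item A single $(r-2)$-set $T_x\subseteq N(x)\cap V_0$ serving almost every partner need not exist. In Construction~(IV), a vertex of $A_1$ needs a different $(r-2)$-subset of its $r$ neighbours in $B$ for each class $A_3,\ldots,A_{r+1}$. Allowing $T_x$ to be larger makes the claim vacuous.
\item Adjacent low pairs, your bad event (i), are never handled. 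Yet (b) must hold for them, and semi-saturation says nothing about them directly.
\end{itemize}

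Step 1 is also numerically wrong. A tower recursion has only $O(\log^* n)$ thresholds below $\log n$, so the pigeonhole gives nothing near $n/\log\log n$. Moreover, since $\prod_{j<J} d_{j+1}/d_j\le\log n$, having $J\ge\log\log n$ forces the geometric mean of the ratios $d_{j+1}/d_j$ to be at most $e$. That destroys the gap your charging was meant to exploit.

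The missing idea is a global bound on the number of distinct traces among bad vertices; with it, no multi-scale pigeonhole is needed. The paper's argument runs as follows.
\begin{itemize}
\item Take the single threshold $X=\{v:d(v)\ge\log\log n\}$ and write $H_X(y)=N(y)\cap X$. Let $Y$ be the set of low vertices $y$ having some low $z$, adjacent or not, with $|H_X(y)\cap H_X(z)|<r-2$. Then (b) holds for $V_0=X\cup Y$ by definition, and it remains to show $|Y|=o(n/\log\log n)$.
\item Badness depends only on the trace. So all vertices of $Y$ with a common trace $S$ are bad with the same $z$. They must lie within distance $2$ of $z$ in $G[V\setminus X]$, since otherwise their witnessing clique would lie in $X$. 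This gives $O((\log\log n)^2)$ vertices per trace and handles adjacent pairs automatically.
\item The traces of vertices of $Y$ contain no sunflower with about $(\log\log n)^2$ petals. Indeed, a bad partner $z$ of one member is far from more than $\log\log n$ other members $y_i$. Each witnessing clique for $zy_i$ then lies in $H_X(z)\cap H_X(y_i)$, and it cannot lie inside the core, or $z$ would not be a bad partner. Hence $H_X(z)$ meets more than $\log\log n$ disjoint petals, contradicting $d(z)<\log\log n$.
\item The Erd\H{o}s--Rado sunflower lemma then limits the number of traces to $t!M^t=n^{o(1)}$, with $t=\lfloor\log\log n\rfloor$ and $M\approx(\log\log n)^2$.
\end{itemize}
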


\noindent\textbf{Proof. }
Let $X:=\{x\in V : d_G(x)\ge \log\log n\}$. As $\sum_{x\in V} d_G(x)=2e(G)\le 2Cn$,
\begin{equation}\label{eq:L24-X}
|X|\le \frac{2Cn}{\log\log n}.
\end{equation}
For $y\in V\setminus X$, set $H_X(y):=N_G(y)\cap X$. Then $|H_X(y)|<\log\log n$. Define
\[
Y:=\bigl\{y\in V\setminus X:\exists\, z\in V\setminus X \text{ with } |H_X(y)\cap H_X(z)|<r-2\bigr\},
\qquad V_0:=X\cup Y.
\]
Then (a) follows from \eqref{eq:L24-X} once we show $|Y|\le n/\log\log n$.

Fix distinct $u,v\in V\setminus V_0$. Since $u,v\notin Y$,  $|H_X(u)\cap H_X(v)|\ge r-2$. Since $X\subseteq V_0$, we have $H_X(u)\cap H_X(v)\subseteq H_{V_0}(u)\cap H_{V_0}(v)$. Therefore $|H_{V_0}(u)\cap H_{V_0}(v)|\ge r-2$, proving~(b).

It remains to bound $|Y|$. Let $\mathcal{F}:=\{H_X(y):y\in Y\}$, viewed as a hypergraph on vertex set $X$.

\begin{claim}\label{clm:L24-sunflower}
$\mathcal{F}$ contains no sunflower with more than $M+1$ petals, where $M:=\lfloor(\log\log n)^2+\log\log n\rfloor$.
\end{claim}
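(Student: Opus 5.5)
The plan is to argue by contradiction. Suppose $\mathcal{F}$ contains a sunflower with kernel $S\subseteq X$ and $t\ge M+2$ petals. Its members are $H_X(y_1),\ldots,H_X(y_t)$ for distinct $y_1,\ldots,y_t\in Y$, with
\[
H_X(y_i)\cap H_X(y_j)=S \quad \text{for all } i\neq j.
\]
The only property of $G$ I will use is semi-saturation. If $y_iy_j\notin E(G)$, then $G+y_iy_j$ contains a copy of $K_r$ through the new edge. Hence $y_i$ and $y_j$ have at least $r-2$ common neighbours, and these span a $K_{r-2}$.

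\textbf{Step 1: locate a bad pair.} Define an auxiliary graph $\Gamma$ on $\{y_1,\ldots,y_t\}$ by joining $y_i$ and $y_j$ if either $y_iy_j\in E(G)$, or $y_i$ and $y_j$ have a common neighbour in $V\setminus X$. Every vertex of $V\setminus X$ has degree $<\log\log n$. So $y_i$ has fewer than $\log\log n$ partners of the first kind. For the second kind, $y_i$ has fewer than $\log\log n$ neighbours $w\notin X$, and each such $w$ has fewer than $\log\log n$ further neighbours. This gives fewer than $(\log\log n)^2$ partners of the second kind. Therefore $\Delta(\Gamma)\le M$. Since $t\ge M+2$, some $y_i$ has a non-neighbour $y_j$ in $\Gamma$. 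For this pair, $y_iy_j\notin E(G)$ and every common neighbour of $y_i,y_j$ lies in $X$. All common neighbours in $X$ lie in $H_X(y_i)\cap H_X(y_j)=S$. Semi-saturation then forces $|S|\ge r-2$, and moreover $S$ contains a $K_{r-2}$.

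\textbf{Step 2: rule out a large kernel.} It remains to derive a contradiction from a kernel with $|S|\ge r-2$. This is the step I expect to be the main obstacle, because the sunflower property by itself is not violated by a large kernel. The contradiction has to come from membership in $Y$. Each $y_i\in Y$ has a witness $z_i\in V\setminus X$ with
\[
|H_X(y_i)\cap H_X(z_i)|<r-2.
\]
Since $S\subseteq H_X(y_i)$, the witness satisfies $|S\cap H_X(z_i)|<r-2$. If $z_iy_i$ is a nonedge, I would apply the argument of Step 1 to the pair $(y_i,z_i)$. The pair then needs a common neighbour outside $X$ (or the edge $y_iz_i$). I would try to show that a single vertex $z$ can play this role for only boundedly many (at most $M$) of the $y_i$, by the same degree count as in Step 1. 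That gives a second, independent degree bound. If this does not close, I would fall back on handling the case $|S|\ge r-2$ in the subsequent counting (the sunflower lemma applied to $\mathcal{F}$). There the sunflowers actually produced have kernels meeting some fixed witness set in fewer than $r-2$ vertices, so only the case $|S|<r-2$ of Step 1 is needed, and Step 1 already handles it completely.

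In summary, the key quantitative input is the bound $\Delta(\Gamma)\le M$ from low degrees, together with the observation that semi-saturation forces $r-2$ common neighbours for each nonadjacent pair. The hard part is making the kernel case $|S|\ge r-2$ impossible.
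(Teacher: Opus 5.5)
There is a genuine gap, and it sits where you flagged it. Your Step 1 is correct, but it cannot produce a contradiction. Comparing two petal vertices $y_i,y_j$ with each other only forces the kernel $S$ to contain a $K_{r-2}$, and nothing forbids that.

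Neither remedy in Step 2 closes the gap. Bounding how many $y_i$ can share a given witness only shows that the witnesses $z_i$ are not all equal, which is not a contradiction. The fallback conflates $|S\cap H_X(z)|<r-2$ with $|S|<r-2$. A kernel of a sunflower in $\mathcal{F}$ may well be large; only its trace on a witness neighbourhood is small. So Step 1 is not what is needed there either. In any case the claim itself must be proved here, not deferred to the Erd\H{o}s--Rado counting.

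The missing idea is to compare the petals not with each other but with one fixed witness. Take one petal vertex $y_0$ and its witness $z\in V\setminus X$. As you observed, $S\subseteq H_X(y_0)$ gives $|S\cap H_X(z)|<r-2$.

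Every vertex of $V\setminus X$ has degree less than $\log\log n$. Hence the ball of radius two around $z$ in $G[V\setminus X]$ has fewer than $(\log\log n)^2+1$ vertices. With at least $M+2$ petals, more than $\log\log n$ of the petal vertices $y_i$ therefore lie at distance greater than two from $z$.

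For each such $y_i$ we have $y_iz\notin E(G)$ and $N(y_i)\cap N(z)\subseteq X$. Semi-saturation then gives an $(r-2)$-set $T_i\subseteq H_X(y_i)\cap H_X(z)$. Because $|S\cap H_X(z)|<r-2$, we get $T_i\not\subseteq S$. So some $x_i\in T_i$ lies in the petal $H_X(y_i)\setminus S$.

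The petals are pairwise disjoint, so these $x_i$ are distinct neighbours of $z$. This gives $d(z)>\log\log n$, contradicting $z\notin X$.

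This argument does not depend on the size of $S$, so your Step 1 becomes unnecessary.
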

\noindent\textbf{Proof of Claim \ref{clm:L24-sunflower}.}
This follows the sunflower argument of \cite{Alon}, adapted to $K_r$-semi-saturation and to our definition of $Y$.
Suppose that $\{H_X(y_i)\}_{i=0}^{M}$ is a sunflower of $\mathcal{F}$ on distinct vertices $y_i\in Y$, and write $U:=\bigcap_{i=0}^{M} H_X(y_i)$ for its core.
Since $y_0\in Y$, there is $z\in V\setminus X$ such that
\begin{equation}\label{eq:L24-badpair}
|H_X(y_0)\cap H_X(z)|<r-2.
\end{equation}
Every vertex of $V\setminus X$ has degree strictly less than $\log\log n$ in $G$, hence also in $G[V\setminus X]$.
Thus fewer than $(\log\log n)^2$ vertices of $V\setminus X$ are of distance at most two from $z$ in $G[V\setminus X]$.
Since $M+1>(\log\log n)^2+\log\log n$, among the $M+1$ vertices, there are more than $\log\log n$ vertices, say $y_{1},\ldots,y_{s}$ with $s>\log\log n$,
such that $y$ and $z$ have  distance in $G[V\setminus X]$ strictly larger than two for any $y\in \{y_{1},\ldots,y_{s}\}$.
Then $y_{i} z\notin E(G)$ and $N_G(y_{i})\cap N_G(z)\cap (V\setminus X)=\emptyset$ for all $1\le i\le s$. Hence for all $1\le i\le s$
\begin{equation}\label{eq:L24-cmn}
N_G(y_{i})\cap N_G(z)\subseteq X.
\end{equation}
Since $G$ is $K_r$-semi-saturated, $G+y_{i} z$ contains a copy of $K_r$ for all $1\le i\le s$.
Let $T_{i}$ be the vertex set of the clique for all $1\le i\le s$. Then $|T_{i}|=r$ and $y_{i},z\in T_{i}$. Let $T_{i}'=T_{i}\setminus\{y_{i},z\}$ for all $1\le i\le s$. Then $T_{i}'\subseteq  N_G(y_{i})\cap N_G(z)\subseteq X$ by \eqref{eq:L24-cmn}.

If $T_{i}'\subseteq U$ for some $i$, then $T_{i}'\subseteq H_X(y_0)$ by $U\subseteq H_X(y_0)$, and $T_{i}'\subseteq H_X(z)$. Hence $T_{i}'\subseteq H_X(y_0)\cap H_X(z)$ and $|H_X(y_0)\cap H_X(z)|\ge r-2$ which is a contradiction to \eqref{eq:L24-badpair}.
Thus $T_{i}'\not\subseteq U$ for any $1\le i\le s$, and we may choose $x_i\in T_{i}'\setminus U$.
For $1\le j\neq i\le s$, we have $H_X(y_i)\cap H_X(y_j)=U$. Since  $x_i\notin U$, we have $x_i\notin H_X(y_j)$.
The vertices $x_1,\ldots x_s$ are pairwise distinct as they lie in pairwise disjoint petals, and each lies in $N_G(z)\cap X=H_X(z)$.
Therefore $|H_X(z)|\ge s>\log\log n$,  a contradiction to $z\in V\setminus X$.
$\hfill\square$
\vspace{0.5em}

Recall each hyperedge of $\mathcal{F}$ has size at most $\lfloor\log\log n\rfloor$.
By a theorem of Erd\H{o}s and Rado \cite{ErdosRado}, if a hypergraph has more than $t!\, \mu^{t}$ hyperedges of size at most $t$, then it contains a sunflower with $\mu+1$ petals.
With $t:=\lfloor\log\log n\rfloor$ and $\mu:=M$,  Claim \ref{clm:L24-sunflower} implies $|\mathcal{F}|\le t!\, M^{t}$. So for sufficiently large $n$, we have
\begin{equation}\label{eq:L24-ER}
|\mathcal{F}|
\le (\log\log n)!\,\left((\log\log n)^2+\log\log n\right)^{\log\log n}
=o\left(\frac{n}{(\log\log n)^3}\right).
\end{equation}

Now let $S\subseteq X$ such that $\mathcal{A}_S:=\{y\in Y:H_X(y)=S\}\neq\emptyset$.
Choose $w\in V\setminus X$ such that $|S\cap H_X(w)|<r-2$.
We claim that every $y\in \mathcal{A}_S$ satisfies $\mathrm{dist}_{G[V\setminus X]}(y,w)\le 2$.
Otherwise $yw\notin E(G)$ and $N_G(y)\cap N_G(w)\cap (V\setminus X)=\emptyset$. So $N_G(y)\cap N_G(w)\subseteq X$.
Since $G+yw$ creates a $K_r$, there is an $(r-2)$-clique inside $N_G(y)\cap N_G(w)\subseteq X$ which gives $|H_X(y)\cap H_X(w)|\ge r-2$, a contradiction with $|S\cap H_X(w)|=|H_X(y)\cap H_X(w)|<r-2$.
Thus $\mathcal{A}_S$ lies in a ball of radius 2 around $w$ in $G[V\setminus X]$, which has $O((\log\log n)^2)$ vertices, whence $|\mathcal{A}_S|=O((\log\log n)^2)$.

Together with \eqref{eq:L24-ER}, we have
\[
|Y|\le |\mathcal{F}|\cdot O\bigl((\log\log n)^2\bigr)=o\Bigl(\frac{n}{\log\log n}\Bigr).
\]
So  $|Y|\le n/\log\log n$ for $n$ large enough. \q
\vspace{0.5em}

\begin{lemma}\label{L25}
Let $r\ge 4$ and $c'\in(0,1)$. Suppose $\mathcal{H}$ is an $(r-2)$-intersecting hypergraph such that \eqref{eq:1.3} with parameter $c'$ is feasible and $a(\mathcal{H},c')=A_{r-2}(c')$.
Then for every sufficiently large integer $n$ there exists a $K_r$-semi-saturated graph $G$ on $n$ vertices such that
\[
\Delta(G)\le c' n+O(1),
\qquad
e(G)\le A_{r-2}(c')\, n+O(1).
\]
\end{lemma}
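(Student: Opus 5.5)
We build $G$ directly from $\mathcal{H}$ and an optimal solution of \eqref{eq:1.3}, reversing the decomposition in Lemma~\ref{L24}. Let $\mathcal{H}=\{H_1,\ldots,H_m\}$ on vertex set $V_0$, with $|V_0|$ a constant independent of $n$. Fix an optimal solution $(y_1,\ldots,y_m)$ of \eqref{eq:1.3} with parameter $c'$, so that $\sum_i |H_i|y_i=A_{r-2}(c')$, $\sum_i y_i=1$, and $\sum_{i:x\in H_i}y_i\le c'$ for every $x\in V_0$.

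The graph $G$ has vertex set $V_0\cup W$, where $|W|=n-|V_0|$. We make $V_0$ a clique; this costs only $\binom{|V_0|}{2}=O(1)$ edges. We partition $W$ into classes $W_1,\ldots,W_m$ with $|W_i|=\lfloor y_i n\rfloor$ or $\lceil y_i n\rceil$, rounded so that the sizes sum to $n-|V_0|$; since $m$ is fixed, every rounding error is $O(1)$. Each $w\in W_i$ is joined exactly to the vertices of $H_i$, and $W$ is left independent.

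The edge count is then $\sum_i |W_i||H_i|+O(1)\le n\sum_i y_i|H_i|+O(1)=A_{r-2}(c')n+O(1)$. The degrees are controlled as follows:
\begin{itemize}
\item a vertex of $W$ has degree at most $\max_i|H_i|=O(1)$;
\item a vertex $x\in V_0$ has degree at most $|V_0|+\sum_{i:x\in H_i}|W_i|\le c'n+O(1)$.
\end{itemize}
Hence $\Delta(G)\le c'n+O(1)$, and since $c'<1$ this is at most $n-1$ for large $n$.

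It remains to check that $G$ is $K_r$-semi-saturated. Since $V_0$ is a clique, every nonedge of $G$ is of one of two types:
\begin{itemize}
\item $ww'$ with $w\in W_i$ and $w'\in W_j$ (possibly $i=j$);
\item $wx$ with $w\in W_i$ and $x\in V_0\setminus H_i$.
\end{itemize}
For a nonedge $ww'$, the $(r-2)$-intersecting property (applied to distinct hyperedges, or trivially when $i=j$ provided $|H_i|\ge r-2$) gives a set $T\subseteq H_i\cap H_j$ with $|T|=r-2$. Then $T\cup\{w,w'\}$ spans a $K_r$ in $G+ww'$ that uses the new edge, so the copy is new.

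For a nonedge $wx$, we need $r-2$ vertices of $H_i$; these are adjacent to $w$, to $x$ and to each other, because $V_0$ is a clique. So $H_i\cup\{x\}$ contains $r-1$ vertices forming a clique with $w$ after the edge $wx$ is added, giving a new $K_r$.

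Both cases therefore need $|H_i|\ge r-2$ for every $i$. This holds automatically whenever $m\ge2$, by the $(r-2)$-intersecting property. If $m=1$, feasibility of \eqref{eq:1.3} forces $c'\ge1$, contradicting $c'<1$; so in fact $m\ge 2$ always. Hyperedges with $y_i=0$ get empty classes $W_i$ and cause no harm.

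The only genuine subtlety is the rounding, which is exactly why the statement allows $O(1)$ slack in both the degree bound and the edge count. With $m$ and $|V_0|$ fixed, all the errors above are constants, which completes the plan.
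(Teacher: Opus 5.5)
Your proposal is correct and is essentially the paper's construction: a clique on $V(\mathcal{H})$, classes of size about $y_i n$ each joined exactly to $H_i$, and the same degree and edge counts. Your explicit check of semi-saturation, including $|H_i|\ge r-2$ via $m\ge 2$ (which is forced by $c'<1$), fills in what the paper calls ``not difficult to check''.

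One small slip: class sizes $\lfloor y_i n\rfloor$ or $\lceil y_i n\rceil$ can never sum to $n-|V_0|$ once $|V_0|\ge m$. Take instead $|W_i|=\lfloor y_i(n-|V_0|)\rfloor$ and put the at most $m$ leftover vertices into the clique (as the paper does) or into any class; all errors remain $O(1)$.
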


\noindent\textbf{Proof. }
By Lemmas~\ref{L22} and~\ref{L23}, the value $A_{r-2}(c')$ is attained and we may fix a minimizing $(r-2)$-intersecting $\mathcal H$ with $a(\mathcal H,c')=A_{r-2}(c')$ such that $|\mathcal H|$ and $|V(\mathcal H)|$ are both $O(1)$ as $n\to\infty$ (constants depending only on $r$ and $c'$).

Write $\mathcal{H}=\{H_1,\ldots,H_m\}$ on $V(\mathcal{H})$, let $\{y_i\}_{i=1}^m$ be optimal for \eqref{eq:1.3} at $c'$, and put $w_i:=y_i/c'$, $T:=\sum_{i=1}^m w_i=1/c'$ (so $\{w_i\}_{i=1}^m$ is a fractional edge packing of total weight~$T$). Now we construct a  $K_r$-semi-saturated graph $G$ on $n$ vertices.
For sufficiently large integer $n$, let $V(G)=X\sqcup V_0\sqcup V_1\sqcup\cdots\sqcup V_m$ with $X= V(\mathcal{H})$, $|V_i|=\lfloor (w_i/T)(n-|X|)\rfloor$ for $1\le i\le m$, and $|V_0|=n-|X|-\sum_{i=1}^m|V_i|$ (so $|V_0| \, \le m$).
Let $G[X\cup V_0]$ be a clique, and for each $i\in \{1,\ldots,m\}$, join every vertex of $V_i$ to all of $H_i\subseteq X$ and to no other vertex of~$X$ (no further edges between distinct sets $V_i,V_j$ with $i\neq j$, nor between $\bigcup_{i=1}^m V_i$ and~$V_0$). Not difficult to check that $G$ is $K_r$-semi-saturated.

Note that for every $x\in X$, $\sum_{i:\, x\in H_i}|V_i|\le (n-|X|)/T$. So we obtain $\Delta(G)\le (n-|X|)/T+|X|+|V_0|\le c'n+O(1)$ by $|X|=O(1)$ and $m=O(1)$.
Also,
\[
e(G)\le \binom{|X|+|V_0|}{2}+\sum_{i=1}^m |V_i|\,|H_i|
\le \Bigl(\sum_{i=1}^m |H_i|y_i\Bigr)n+O(1)=A_{r-2}(c')\,n+O(1).
\]
So we are done.\q

\vspace{0.5em}

\noindent\textbf{Proof of Theorem~\ref{T14}. }
Let $\Delta = cn$. We first bound $\limsup_{n \rightarrow \infty} ssat^{\Delta}(n,K_r)/n$. Suppose, towards a contradiction, that $\limsup_{n \rightarrow \infty} ssat^{\Delta}(n,K_r)/n \ge A_{r-2}(c)+\varepsilon$ for some $\varepsilon>0$. Since $A_{r-2}(c)$ is continuous at $c$, choose $\delta>0$ such that $A_{r-2}(c-\delta)<A_{r-2}(c)+\varepsilon$. Take an $(r-2)$-intersecting hypergraph $\mathcal{H}$ with $a(\mathcal{H},c-\delta)=A_{r-2}(c-\delta)$ (feasibility of \eqref{eq:1.3} at $c-\delta$ is part of the definition of $A_{r-2}(c-\delta)$).
Set $c'=c-\delta$. By Lemma~\ref{L25}, for sufficiently large $n$, there is a $K_r$-semi-saturated graph $G$ on $n$ vertices with $\Delta(G)\le (c-\delta)n+O(1)$ and $e(G)\le A_{r-2}(c-\delta)n+O(1)$. Since  $n$ is sufficiently large, we  have $(c-\delta)n+O(1)\le cn$. Hence $\Delta(G)\le \Delta$ and therefore
\[
ssat^{\Delta}(n,K_r)\le e(G)\le A_{r-2}(c-\delta)n+O(1).
\]
Let $n\to\infty$, we have $\limsup_{n\to\infty} ssat^{\Delta}(n,K_r)/n\le A_{r-2}(c-\delta)<A_{r-2}(c)+\varepsilon$, a contradiction.

For the lower bound, suppose $\liminf_{n\rightarrow\infty} ssat^{\Delta}(n,K_r)/n\le A_{r-2}(c)-\varepsilon$ for some $\varepsilon>0$.
Choose a strictly increasing sequence $\{n_i\}_{i=1}^{\infty}$ and graphs $G^i$ that are $K_r$-semi-saturated with $|V(G^i)|=n_i$, $\Delta(G^i)\le cn_i$, and $e(G^i)\le(A_{r-2}(c)-\varepsilon)n_i$. Pick $\delta>0$ with $A_{r-2}(c+\delta)>A_{r-2}(c)-\varepsilon$. Let $C:=A_{r-2}(c)-\varepsilon$.
By
Lemma~\ref{L24}, for each $i$, there is  $V_0^i\subseteq V(G^i)$ with $|V_0^i|=o(n_i)$ and an $(r-2)$-intersecting hypergraph $\mathcal{H}^i$ on $V_0^i$ from the neighbourhoods of vertices in $V(G^i)\setminus V_0^i$, as in part~(b). For each $i$, define weights on $\mathcal{E}(\mathcal{H}^i)$ by
\[
w(H)=\frac{|\{x\in V(G^i)\setminus V_0^i : N_{G^i}(x)\cap V_0^i = H\}|}{|V(G^i)\setminus V_0^i|}.
\]
Then $\sum_{H\in \mathcal{E}(\mathcal{H}^i)} w(H)=1$. Since $|V(G^i)\setminus V_0^i|=(1-o(1))n_i$, for $z\in V_0^i$ and $H\in \mathcal{E}(\mathcal{H}^i)$, we have
\[
\sum_{z \in H} w(H)=\frac{|N_{G^i}(z)\cap (V(G^i)\setminus V_0^i)|}{|V(G^i)\setminus V_0^i|}\le \frac{d_{G^i}(z)}{|V(G^i)\setminus V_0^i|}\le \frac{cn_i}{|V(G^i)\setminus V_0^i|}\le c+\delta
\]
for all sufficiently large $n_i$. Thus $w$ is feasible for \eqref{eq:1.3} at $c+\delta$, where $\sum_{H\in \mathcal{E}(\mathcal{H}^i)} |H|w(H)\ge a(\mathcal{H}^i,c+\delta)\ge A_{r-2}(c+\delta)$. Counting edges between $V_0^i$ and $V(G^i)\setminus V_0^i$ gives
\[
\sum_{x\in V(G^i)\setminus V_0^i} |N_{G^i}(x)\cap V_0^i| = |V(G^i)\setminus V_0^i|\sum_{H\in \mathcal{E}(\mathcal{H}^i)} |H|w(H).
\]
Hence
\[
e(G^i)\ge \sum_{x\in V(G^i)\setminus V_0^i} |N_{G^i}(x)\cap V_0^i|
\ge (1-o(1))n_i\, A_{r-2}(c+\delta)
>(A_{r-2}(c)-\varepsilon)n_i
\]
for all sufficiently large $i$, a contradiction. \q
\vspace{0.5em}

\section{The extremal function $A_{r-2}(c)$}

In this section we prove Theorem~\ref{T15}. Assume $r\ge 4$.

\subsection{Upper bounds}\label{subsec:T15-upper}

Theorem~\ref{T15} will be proved in Subsection~\ref{subsec:T15-lb} by matching upper and lower bounds.
We first establish the upper bounds in the next proposition; the proof relies entirely on the explicit graph constructions in Appendix~\ref{app:T15-constructions}.

\begin{prop}\label{prop:T15-upper}
Fix an integer $r\ge 4$. For every sufficiently large $n$,

there exists an $n$-vertex graph $G$ with $e(G)=f(r,c)n+O(1)$ which is $K_r$-semi-saturated and satisfies $\Delta(G)\le cn$, where
\begin{equation*}
f(r,c) : =\begin{cases}
\;r-1& \textbf{(I)} \;\; \text{if } \; \frac{r-1}{r} < c < 1 \, , \\[0.4em]
\;2r-3-(r-2)c& \textbf{(II)} \;\; \text{if } \; \frac{r-3/2}{r-1/2} < c < \frac{r-1}{r} \, , \\[0.4em]
\;3r-4-(2r-2)c \quad \; & \textbf{(III)} \;\; \text{if } \; \frac{r-2}{r-1} < c < \frac{r-3/2}{r-1/2} \, , \\[0.4em]
\;r& \textbf{(IV)} \;\; \text{if } \; \frac{r}{r+2} < c < \frac{r-2}{r-1} \, .
\end{cases}
\end{equation*}
\end{prop}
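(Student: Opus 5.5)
The natural route is to produce, for each of the four ranges, an explicit $(r-2)$-intersecting hypergraph $\mathcal{H}$ together with a feasible weighting $(y_i)$ for \eqref{eq:1.3} at parameter $c$ with $\sum_i|H_i|y_i=f(r,c)$. Then I would build the graph exactly as in the proof of Lemma~\ref{L25}. Put a clique on $X=V(\mathcal{H})$ together with a bounded set $V_0$. Add classes $V_i$ of size about $y_i n/c$, where each vertex of $V_i$ is joined precisely to $H_i\subseteq X$.

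Semi-saturation is checked pair by pair.
\begin{itemize}
\item A nonedge inside or between classes $V_i,V_j$ gets at least $r-2$ common neighbours in the clique $X$, by the $(r-2)$-intersecting property.
\item A nonedge from $V_i$ to $x\in X\setminus H_i$ needs $|H_i|\ge r-2$, which holds automatically.
\item Edges touching $V_0$ are handled by making $V_0$ part of the clique.
\end{itemize}
The degree of $x\in X$ is $\sum_{i:x\in H_i}|V_i|+O(1)\le cn+O(1)$. The additive $O(1)$ is removed by shrinking the classes by $O(1)$ vertices and moving them into $V_0$, at a cost of $O(1)$ edges; this needs the constraints to be slack or the sizes to be rounded down. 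This gives $e(G)=f(r,c)n+O(1)$ with $\Delta(G)\le cn$.

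For range \textbf{(I)} I would take all $r$ subsets of size $r-1$ of an $r$-set, each with weight $1/r$. These pairwise meet in $r-2$ vertices, every vertex has load $(r-1)/r<c$, and the cost is $r-1$. For range \textbf{(IV)} I would take all $r$-subsets of an $(r+2)$-set with uniform weights. Two such sets meet in at least $r-2$ vertices, each vertex has load $r/(r+2)<c$, and the cost is $r$.

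The middle ranges \textbf{(II)} and \textbf{(III)} are the main obstacle. The slopes $-(r-2)$ and $-(2r-2)$ suggest mixing the (I)-configuration with hyperedges of larger size (cost $r-1+(r-2)t$, resp.\ interpolating towards the (IV) value $r$). The weight moved onto the larger hyperedges must reduce, not increase, the load on the $r$ core vertices. Naive mixtures fail here. For example, adding $r$-subsets of an $(r+1)$-set, or one hyperedge containing the whole core, raises the core load to $(r-1)/r+\beta/r$. So the extra hyperedges must avoid part of the core while still meeting every $(r-1)$-set in $r-2$ vertices. They must therefore contain many fresh vertices, for example $(r-3)$-subsets of the core together with fresh vertices shared among all of them.

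I would first try the following for \textbf{(II)}: hyperedges $(H\cap[r])\cup S$, where the core part $H\cap[r]$ has size $r-2$ and $S$ is a common set of $r-1$ new vertices of size about $2r-3$ in total. Then I would solve the small LP symbolically and check that the optimum equals $2r-3-(r-2)c$ exactly on $\bigl(\tfrac{2r-3}{2r-1},\tfrac{r-1}{r}\bigr)$, with the breakpoints where a vertex constraint becomes tight. For \textbf{(III)} I would similarly interpolate between this configuration and the (IV)-configuration. I would verify the intersection condition for every pair of hyperedge types, and fix the weights by making the binding vertex constraints equal to $c$. If these guesses fail, the fallback is to search small separated hypergraphs, as justified by Lemma~\ref{L23}, for the LP optimum at the endpoints $c=\tfrac{r-2}{r-1}$, $\tfrac{2r-3}{2r-1}$ and $\tfrac{r-1}{r}$, and interpolate linearly.
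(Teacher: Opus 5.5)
\textbf{Verdict: genuine gap.} Your framework matches the paper's Appendix~A: a feasible weighting of an $(r-2)$-intersecting hypergraph, realised as a clique on $X$ plus classes joined to the hyperedges. Your families for \textbf{(I)} and \textbf{(IV)} are correct. For (IV) the paper uses the $r+2$ cyclic sets $B\setminus\{b_i,b_{i+1}\}$, but all $r$-subsets of an $(r+2)$-set also work. The gap is \textbf{(II)} and \textbf{(III)}, which are the real content of the proposition: you give no construction, and the shape you aim for cannot exist.

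\textbf{Why the proposed shape fails.} Suppose all $r$ sets $[r]\setminus\{i\}$ of the (I)-family keep positive weight. Then every other hyperedge $H$ must have $|H\cap[r]|\ge r-1$. Otherwise, taking $i\in H\cap[r]$ (or any $i$ if $H\cap[r]=\emptyset$) gives $|H\cap([r]\setminus\{i\})|\le r-3$. Fresh vertices cannot help, because the old sets contain none. Hence the total core load $\sum_H y_H|H\cap[r]|$ is at least $r-1$, so some core vertex has load at least $\frac{r-1}{r}>c$. In particular, your candidates $(H\cap[r])\cup S$ with $|H\cap[r]|\in\{r-2,r-3\}$ already violate the intersection condition. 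What fails is keeping the whole (I)-family. The $r$-subsets of an $(r+1)$-set that you discarded are exactly what the paper uses.

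\textbf{Why the fallback fails.} A computer search covers only fixed $r$. Interpolating the endpoint optima is also invalid. The points $\frac{r-1}{r}$ and $\frac{r-3/2}{r-1/2}$ are jump points of the right-continuous $A_{r-2}$. At $c=\frac{r-1}{r}$ the optimum is $r-1$ (family (I) is feasible), whereas $2r-3-(r-2)c\to r-\frac{2}{r}$ there. So interpolation yields values strictly below $f(r,c)=A_{r-2}(c)$, which no hypergraph realises.

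\textbf{The missing idea} is to reweight and to put the new vertices into the old hyperedges. Write $E_0=\{b_1,\dots,b_{r-1}\}$.

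For \textbf{(II)}, take the clique $\{b_1,\dots,b_{r+1}\}$ and these hyperedges:
\begin{itemize}
\item $E_0$, with weight $1-(r-1)(1-c)$;
\item $E_j=(E_0\setminus\{b_j\})\cup\{b_r,b_{r+1}\}$ for $2\le j\le r-1$, with weight $1-c$;
\item $E_{1,1}=(E_0\setminus\{b_1\})\cup\{b_r\}$ and $E_{1,2}=(E_0\setminus\{b_1\})\cup\{b_{r+1}\}$, with weight $\frac{1-c}{2}$ each.
\end{itemize}
Total weight exactly $1-c$ on the sets missing $b_j$ makes every $b_j$, $j\le r-1$, have load exactly $c$; this is the mechanism behind \eqref{eq:Ai}. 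The sets $E_0\setminus\{b_j\}$ pairwise meet in only $r-3$ vertices, so they need extra common vertices. A single extra vertex would carry $(r-1)(1-c)>c$, so it is split into twins $b_r,b_{r+1}$. Each twin has load $(r-\frac{3}{2})(1-c)$, which is at most $c$ iff $c\ge\frac{r-3/2}{r-1/2}$. The cost is one extra vertex in each $E_j$, $j\ge 2$, giving $(r-1)+(r-2)(1-c)$.

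For \textbf{(III)}, keep $E_0$ and the weights of the $E_j$, but split three ways:
\begin{itemize}
\item $E_2=(E_0\setminus\{b_2\})\cup\{b_r,b_{r+1}\}$;
\item $E_j=(E_0\setminus\{b_j\})\cup\{b_r,b_{r+2}\}$ for $j\ge 3$;
\item $E_{1,1}=(E_0\setminus\{b_1\})\cup\{b_r\}$, with weight $c-(r-2)(1-c)$;
\item $E_{1,2}=(E_0\setminus\{b_1\})\cup\{b_{r+1},b_{r+2}\}$, with weight $(r-1)(1-c)-c$.
\end{itemize}
Then $b_r$ has load $c$. The loads of $b_{r+1}$ and $b_{r+2}$ are $r(1-c)-c$ and $(2r-4)(1-c)-c$, both at most $c$ for $c\ge\frac{r-2}{r-1}$. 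The cost is $3r-4-(2r-2)c$.

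\textbf{Two smaller points.}
\begin{itemize}
\item The class sizes should be $y_i n$, not $y_i n/c$.
\item Because loads are tight in (II) and (III), moving surplus vertices into $V_0$ does not help. $V_0$ lies in the clique, so such a move never lowers any degree in $X$. Instead, use the weights at $c-K/n$; this is harmless since $f$ is affine on each interval. Alternatively, shift the class sizes by constants as the paper does.
\end{itemize}
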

\noindent\textbf{Proof. }See Appendix~\ref{app:T15-constructions}.

\subsection{Lower bounds}\label{subsec:T15-lb}

It remains to prove the matching lower bound: every $K_r$-semi-saturated graph $G$ on $n$ vertices with $\Delta(G)\le cn$ satisfies $e(G)\ge f(r,c)n-o(n)$, where $f(r,c)$ as defined in Proposition~\ref{prop:T15-upper}.

Fix $\frac{r}{r+2}<c<1$ and $c \neq \frac{r-2}{r-1}, \frac{r-3/2}{r-1/2}, \frac{r-1}{r}$. Let $n$ be sufficiently large, and let $G$ be $K_r$-semi-saturated on $n$ vertices with minimum number of edges and $\Delta(G)\le cn$. By Proposition~\ref{prop:T15-upper},
\begin{equation}\label{EUpBound}
e(G)\leq f(r,c) n +O(1).
\end{equation}

Set $M:=n^{0.4}$. Let $Y=\{v\in V(G):d(v)\ge M\}$ and $X=V(G)\setminus Y$.

For $u\in X$ write $d_Y(u)=|N(u)\cap Y|$ and $\delta_Y(X)=\min_{u\in X} d_Y(u)$. We will use $e(X,Y)$ to denote the number of edges between $X$ and $Y$ in $G$.
\vspace{0.5em}

\begin{lemma}\label{lem:T15-lb-bridge}
$|Y|=o(n)$ and $|X|=n-o(n)$.
\end{lemma}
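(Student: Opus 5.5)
This is a degree-counting argument. Since $Y$ consists of the vertices of degree at least $M=n^{0.4}$, the handshake identity together with the upper bound \eqref{EUpBound} gives
\[
|Y|\cdot M \le \sum_{v\in Y} d(v)\le \sum_{v\in V(G)} d(v) = 2e(G)\le 2f(r,c)n+O(1).
\]
Here $f(r,c)\le 3r-4$ is a constant depending only on $r$; each of the four branches in Proposition~\ref{prop:T15-upper} is bounded by $3r-4$ because $0<c<1$.

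The steps are, in order:
\begin{enumerate}
\item Invoke \eqref{EUpBound}, which rests on Proposition~\ref{prop:T15-upper} and on the minimality of $e(G)$, to get $e(G)=O(n)$.
\item Sum degrees over $Y$ and use $d(v)\ge M$ for every $v\in Y$. This yields $|Y|\le (2f(r,c)n+O(1))/n^{0.4}=O(n^{0.6})=o(n)$.
\item Conclude $|X|=n-|Y|=n-o(n)$ directly from $X=V(G)\setminus Y$.
\end{enumerate}

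There is no real obstacle. The only point to check is that the implied constant in $O(n^{0.6})$ depends only on $r$ and $c$ and not on $n$. This holds because \eqref{EUpBound} is uniform in $n$ once $n$ is sufficiently large.
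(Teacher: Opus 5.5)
Your proposal is correct and is essentially the paper's own argument. Both bound $M|Y|\le\sum_{v\in Y}d(v)\le 2e(G)\le 2f(r,c)n+O(1)$ using \eqref{EUpBound}, conclude $|Y|=O(n^{0.6})=o(n)$, and then obtain $|X|=n-|Y|$.
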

\noindent\textbf{Proof. }Obviously $\sum_{v\in Y}d(v)\ge M|Y|$. On the other hand $\sum_{v\in Y}d(v)\le \sum_{v\in V(G)}d(v)=2e(G)\le 2f(r,c)n+O(1)$ by \eqref{EUpBound}. Therefore $|Y|\le 2f(r,c)n/M+O(1)=o(n)$, and $|X|=n-|Y|=n-o(n)$. \q
\vspace{0.5em}

\begin{lemma}\label{lem:T15-lb-delta-small}
$\delta_Y(X)\ge r-1$.
\end{lemma}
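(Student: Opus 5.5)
The plan is a direct counting argument around a single vertex. Suppose, for contradiction, that some $u\in X$ has $d_Y(u)\le r-2$. I will show that every vertex of $S:=N(u)\cap Y$ is then adjacent to almost all of $G$. This contradicts $\Delta(G)\le cn$ with $c<1$. The argument uses only that $G$ is $K_r$-semi-saturated, that $\Delta(G)\le cn$, and Lemma~\ref{lem:T15-lb-bridge}; minimality of $e(G)$ is not needed.

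\textbf{Step 1: a large set of vertices far from $u$.} Since $u\in X$, we have $d(u)<M=n^{0.4}$. Every $w\in N(u)\cap X$ also has $d(w)<M$. Hence the set
\[
D:=N[u]\cup\bigcup_{w\in N(u)\cap X} N(w)
\]
has $|D|\le 1+M+M^2=O(n^{0.8})$. Let $W:=V(G)\setminus(Y\cup D)$. By Lemma~\ref{lem:T15-lb-bridge}, $|Y|=o(n)$, so $|W|\ge n-o(n)$. For each $v\in W$ two facts hold. First, $uv\notin E(G)$, because $v\notin N[u]$. Second, $u$ and $v$ have no common neighbour in $X$, because $v\notin N(w)$ for every $w\in N(u)\cap X$. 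Therefore
\[
N(u)\cap N(v)\subseteq N(u)\cap Y=S \quad\text{for every } v\in W .
\]

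\textbf{Step 2: apply semi-saturation.} Fix $v\in W$. Since $uv$ is a non-edge, $G+uv$ contains a copy of $K_r$ through $u$ and $v$. Its other $r-2$ vertices form an $(r-2)$-clique inside $N(u)\cap N(v)\subseteq S$. This already gives $|S|\ge r-2$. Under our assumption $|S|\le r-2$, this forces $|S|=r-2$, and the clique must be all of $S$. So every $v\in W$ is adjacent to every vertex of $S$.

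\textbf{Step 3: contradiction with the degree bound.} Pick any $s\in S$; it exists since $r-2\ge 2$. By Step 2, $d(s)\ge |W|\ge n-o(n)$. Since $c<1$ is fixed, $n-o(n)>cn$ for $n$ sufficiently large, so $d(s)>cn\ge\Delta(G)$, a contradiction. Hence $d_Y(u)\ge r-1$ for every $u\in X$, i.e.\ $\delta_Y(X)\ge r-1$.

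The step needing the most care is Step 1. One must check that removing $Y$ together with the distance-two neighbourhood of $u$ through $X$ costs only $o(n)$ vertices. This relies on the threshold $M=n^{0.4}$, which gives $M^2=o(n)$, and on $|Y|=o(n)$ from Lemma~\ref{lem:T15-lb-bridge}. Once that is settled, Steps 2 and 3 follow immediately from the definitions.
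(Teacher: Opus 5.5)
Your proof is correct and is essentially the paper's argument, in pointwise rather than averaged form. The paper double-counts $3$-vertex paths starting at $u$. It bounds them above by $cn\,d_Y(u)+O(M^2)$, since paths through $X$-neighbours of $u$ are few because those neighbours have degree below $M$. It bounds them below by $(r-2)(n-M)$ via semi-saturation. Up to the $O(M^2)$ paths through $X$, the upper count is $\sum_{s\in S} d(s)$, so this is your observation that all but $O(M^2)$ vertices must see all of $S$, summed over $S$.

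One minor point: excluding $Y$ from $W$ is unnecessary, since neither property of $W$ uses $v\notin Y$. Dropping it removes the appeal to Lemma~\ref{lem:T15-lb-bridge}, whose proof relies on \eqref{EUpBound} and hence on the minimality of $e(G)$. Your remark that minimality is not needed would then be literally true.
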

\noindent\textbf{Proof. }Suppose there is $u\in X$ such that $d_Y(u)\le r-2$. Count $3$-vertex paths with terminal vertex $u$:
\begin{equation*}
\begin{aligned}
& \bigl|\{(u,v,w): u,v,w\in V(G),\ uv,vw\in E(G),\ u\neq w\}\bigr|  \\
& \le (cn)\, d_Y(u) + (M-d_Y(u))(M-1) \\
& \le cn(r-2)+(M+2-r)(M-1).
\end{aligned}
\end{equation*}
For each $x\in V(G)\setminus N[u]$, we have $|N(u)\cap N(x)|\ge r-2$ since $G$ is $K_r$-semi-saturated which implies each common neighbour $v$ giving a $3$-vertex path $(u,v,x)$ counted on the left-hand side above. Since $|V(G)\setminus N[u]|=n-|N[u]|\ge n-M$, there are at least $(n-M)(r-2)$ such paths. So we have
\[
cn(r-2)+(M+2-r)(M-1)\ge (r-2)(n-M),
\]
equivalently $M^2-M+(c-1)(r-2)n+r-2\ge 0$. Substituting $M=n^{0.4}$ yields a contradiction for $n$ being sufficiently large. \q
\vspace{0.5em}

\begin{cor}\label{cor:T15-1}
For $\frac{r-1}{r}<c<1$, $A_{r-2}(c)=r-1$.
\end{cor}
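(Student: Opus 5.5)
We prove $A_{r-2}(c)=r-1$ for $\frac{r-1}{r}<c<1$ by matching an upper and a lower bound. Both pass through the asymptotic quantity $\lim ssat^{cn}(n,K_r)/n$. The key fact is that $A_{r-2}$ is constant on this open interval, so it is continuous at every such $c$, and Theorem~\ref{T14} applies.

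For the upper bound, take the graphs of Proposition~\ref{prop:T15-upper}, case \textbf{(I)}. These are $K_r$-semi-saturated with $\Delta(G)\le cn$ and $e(G)=(r-1)n+O(1)$, so $\limsup ssat^{cn}(n,K_r)/n\le r-1$. To get the upper bound on $A_{r-2}(c)$ without first knowing continuity, we argue directly on the hypergraph side. Let $\mathcal{H}$ be $r-1$ disjoint copies' worth of structure: a family of $(r-1)$-sets on a vertex set $V$ that are pairwise $(r-2)$-intersecting and admit a fractional matching of total weight $\ge 1/c$. Concretely, take all $(r-1)$-subsets of an $r$-set. They pairwise meet in $r-2$ vertices, and uniform weight $y_i=1/r$ gives vertex load $(r-1)/r<c$. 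Hence \eqref{eq:1.3} is feasible with objective $r-1$, and $A_{r-2}(c)\le r-1$.

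For the lower bound, we show that any $(r-2)$-intersecting $\mathcal{H}$ with $\nu^*(\mathcal{H})\ge 1/c$ has $a(\mathcal{H},c)\ge r-1$. Hyperedges of size $\le r-2$ are the obstacle, so we handle them first.
\begin{itemize}
\item If $\mathcal{H}$ contains a hyperedge $H$ of size $\le r-2$, then every other hyperedge contains $H$. Thus every vertex of $H$ lies in all hyperedges, and its load is $\sum_i y_i=1>c$. This is infeasible unless $\mathcal{H}$ has a single hyperedge. In that case the load is again $1>c$, still infeasible.
\item Hence all hyperedges have size $\ge r-1$, and $a(\mathcal{H},c)=\sum|H_i|y_i\ge r-1$.
\end{itemize}
Together with the upper bound this gives $A_{r-2}(c)=r-1$. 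The same conclusion can alternatively be read off Lemma~\ref{lem:T15-lb-delta-small} and the proof of Theorem~\ref{T14}. There, $\delta_Y(X)\ge r-1$ gives $e(G)\ge (r-1)|X|=(r-1)n-o(n)$.

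The main point to get right is the small-edge exclusion: it uses $c<1$ essentially. This is consistent with the discontinuity at $c_0=1$, where the value drops to $r-2$.
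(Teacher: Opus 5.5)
Your proof is correct, and it takes a genuinely different route from the paper's.

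The paper argues on the graph side. In a minimum $K_r$-semi-saturated graph with $\Delta(G)\le cn$, it separates out the vertices of degree below $n^{0.4}$. It then shows, by counting $3$-vertex paths, that each such vertex has at least $r-1$ high-degree neighbours (Lemma~\ref{lem:T15-lb-delta-small}); this is where $c<1$ is used. From this it concludes $e(G)\ge (r-1)n-o(n)$, and transfers the bound, together with case~(I) of Proposition~\ref{prop:T15-upper}, to $A_{r-2}(c)$ through Theorem~\ref{T14}.

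You never leave the linear program.
\begin{itemize}
\item Your upper bound is the family of all $(r-1)$-subsets of an $r$-set with weights $1/r$. This is the matrix recorded for Construction~(I) in Appendix~\ref{app:T15-matrices}.
\item Your lower bound is the observation that a vertex lying in every hyperedge carries load $\sum_i y_i=1>c$. In an $(r-2)$-intersecting family, any hyperedge of size at most $r-2$, or a lone hyperedge, produces such a vertex.
\end{itemize}

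This is essentially the LP counterpart of the path-counting lemma, but it has several advantages:
\begin{itemize}
\item it is exact and has no $o(n)$ error terms;
\item it does not need the continuity hypothesis of Theorem~\ref{T14}, which the paper's appeal to that theorem formally requires;
\item it yields slightly more, namely $A_{r-2}(c)\ge r-1$ for every $c<1$, with equality on all of $[\frac{r-1}{r},1)$.
\end{itemize}
The paper's route, in exchange, gives the graph statement $ssat^{cn}(n,K_r)=(r-1)n+o(n)$ directly. It also builds the structure (the set $B$ and the classes $A_i$) that it reuses for intervals (II)--(IV), where a direct LP lower bound is much less immediate.

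As a matter of presentation, drop your opening claim that $A_{r-2}$ is constant on the interval, hence continuous, so that Theorem~\ref{T14} applies. That is the conclusion being proved, your argument never uses it, and leaving it in makes the proof look circular. Likewise, remove the garbled phrase about ``$r-1$ disjoint copies' worth of structure''.
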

\noindent\textbf{Proof. }By Lemmas~\ref{lem:T15-lb-bridge} and \ref{lem:T15-lb-delta-small}, we have $e(G) \ge e(X,Y)\ge (r-1)n - o(n) $.
By Proposition~\ref{prop:T15-upper} and Theorem~\ref{T14}, $A_{r-2}(c)=f(r,c)$ for $c \in (\frac{r-1}{r}, 1)$. \q
\vspace{0.5em}

If $\delta_Y(X)\ge r$, then $e(G)\ge e(X,Y)\ge rn-o(n)$. Note that $f(r,c)\le r$ and the equality holds only if $\frac{r}{r+2}<c<\frac{r-2}{r-1}$. By \eqref{EUpBound}, $\delta_Y(X)\le r-1$ when $c>\frac{r-2}{r-1}$.

By Lemma \ref{lem:T15-lb-delta-small}, we may assume $\delta_Y(X)=r-1$ in the following discussion.
Fix a vertex $u\in X$ with $d_Y(u)=r-1$. Let $B=N(u)\cap Y=\{b_1,\ldots,b_{r-1}\}$. Define
\[
W_1:=N(u)\cap X,\qquad W_2:=\bigl(N[W_1]\setminus N[u]\bigr)\cap X.
\]
Then $|W_1|+|W_2|+|B|+|\{u\}|=o(n)$ by $M=n^{0.4}$. Let $C$ be the set of the remaining $(n-o(n))$ vertices.

For every $p\in C$, we have $N(p)\cap N(u)\cap X=\emptyset$, and $|N(p)\cap B|\ge r-2$ since $pu\notin E(G)$ and $G$ is $K_r$-semi-saturated. Partition $C$ into
\[
A_0=\{x\in C: N(x)\cap B=B\},\qquad
A_i=\bigl\{x\in C: N(x)\cap B=B\setminus\{b_i\}\bigr\}\quad (1\le i\le r-1).
\]
Set $A=\bigcup_{i=1}^{r-1}A_i$. Then $C= A \cup A_0$ and
\begin{equation}\label{eq:Ai}
|A_i| \, \ge |C|-cn=(1-c-o(1))n
\end{equation}
for each~$ 1 \le i \le r-1$ (otherwise $\Delta > cn$). Then $0\le |A_0|\le |C|-\sum_{i=1}^{r-1}|A_i|\le n-(r-1)(1-c)n - o(n)$ which forces $c>\frac{r-2}{r-1}$. This shows that if $\frac{r}{r+2}<c<\frac{r-2}{r-1}$, we must have $\delta_Y(X)=r$. Therefore, we have following Corollary.
\vspace{0.5em}

\begin{cor}\label{cor:T15-4} For $\frac{r}{r+2}<c<\frac{r-2}{r-1}$, $A_{r-2}(c)=r$.
\end{cor}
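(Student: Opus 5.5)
The plan is to combine the matching upper bound of Proposition~\ref{prop:T15-upper}, case \textbf{(IV)}, with the lower bound that the preceding discussion essentially establishes, and then pass to $A_{r-2}(c)$ through Theorem~\ref{T14}, exactly as in Corollary~\ref{cor:T15-1}.

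For the lower bound, fix $c\in(\frac{r}{r+2},\frac{r-2}{r-1})$ and let $G$ be a minimum $K_r$-semi-saturated graph on $n$ vertices with $\Delta(G)\le cn$. By Lemma~\ref{lem:T15-lb-delta-small}, $\delta_Y(X)\ge r-1$.

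Suppose $\delta_Y(X)=r-1$, witnessed by a vertex $u$. The partition of $C$ into $A_0,A_1,\dots,A_{r-1}$ is then available. For each $i\ge1$, every vertex outside $A_i$ that lies in $C$ is adjacent to $b_i$. Since $d(b_i)\le cn$, this gives \eqref{eq:Ai}, namely $|A_i|\ge(1-c-o(1))n$. Summing over $i$ gives
\[
|C|\ge (r-1)(1-c-o(1))n .
\]
Because $|C|\le n$, this forces $(r-1)(1-c)\le 1+o(1)$, that is, $c\ge\frac{r-2}{r-1}-o(1)$. This contradicts $c<\frac{r-2}{r-1}$ once $n$ is large, since $c$ is fixed. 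Hence $\delta_Y(X)\ge r$.

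It follows that every vertex of $X$ has at least $r$ neighbours in $Y$, so $e(G)\ge e(X,Y)\ge r|X|=rn-o(n)$ by Lemma~\ref{lem:T15-lb-bridge}. Together with \eqref{EUpBound} and $f(r,c)=r$ on this interval, this gives $ssat^{cn}(n,K_r)=rn+o(n)$, and hence $\lim ssat^{cn}(n,K_r)/n=r$.

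To transfer this to $A_{r-2}(c)$, I would use Theorem~\ref{T14}, which needs continuity of $A_{r-2}$ at $c$. This is the one delicate point. By Theorem~\ref{T13}, $A_{r-2}$ is monotone and has only countably many discontinuities, so it is continuous at all but countably many points of the open interval. At those points Theorem~\ref{T14} gives $A_{r-2}(c)=r$. Thus $A_{r-2}\equiv r$ on a dense subset of the interval. Since $A_{r-2}$ is monotone nonincreasing and right continuous, I would conclude that it is constant equal to $r$ on the whole open interval: for any point $c$, squeeze $A_{r-2}(c)$ between its values at dense points just to the left and just to the right, both of which equal $r$.

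Alternatively, one could bypass continuity entirely. The lower-bound half of the proof of Theorem~\ref{T14} only uses continuity to choose $\delta$. A direct argument would instead take the hypergraph from Lemma~\ref{L24} with parameter $c+\delta$ for tiny $\delta$, and use the upper construction at $c-\delta$. The density argument above is simpler, so that is the route I would take.
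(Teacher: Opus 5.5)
Your proposal is correct and follows the paper's argument: the same partition of $C$ around a vertex $u$ with $d_Y(u)=r-1$ forces $c\ge\frac{r-2}{r-1}-o(1)$, so $\delta_Y(X)\ge r$ and $e(G)\ge rn-o(n)$; this is matched with Construction (IV) and transferred to $A_{r-2}(c)$ via Theorem~\ref{T14}. Your density-plus-monotonicity step for the continuity hypothesis of Theorem~\ref{T14} is a useful addition, since the paper applies that theorem on this interval without checking continuity at $c$.
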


By Corollaries \ref{cor:T15-1} and \ref{cor:T15-4},
 we only consider $ \frac{r-2}{r-1} < c < \frac{r-1}{r}$ and $c \neq \frac{r-3/2}{r-1/2}$. For $v_1,v_2\in C$, if $v_1\in A_0$ then $|N(v_1)\cap N(v_2)\cap B|=|N(v_2)\cap B|\ge r-2$; if $v_1\in A_i$, $v_2\in A_j$ with $i\neq j$, then
\begin{equation}\label{eq:Nv1v2B}
|N(v_1)\cap N(v_2)\cap B|=|B\setminus\{b_i,b_j\}| =r-3.
\end{equation}

Clearly
\begin{equation}\label{eq:CBE}
	e(C,B) = (r-1)|A_0|+(r-2)|A| =(r-1)n-|A|-o(n).
\end{equation}

Since $\delta_Y(X)=r-1$, we have $|N(a)\cap (Y\setminus B)| \ge 1$ for any $a\in A\cap X$. Suppose for each $a\in A\cap X$, $|N(a)\cap (Y\setminus B)|\ge 2$. Then $e(A\cap X,Y\setminus B)\ge 2(|A|-o(n))$ and by \eqref{eq:Ai} and \eqref{eq:CBE} we have
\begin{equation*}
    \begin{aligned}
        e(G) &\ge e(C,B)+e(A\cap X, Y\setminus B)\\
        &\ge (r-1)n-|A|+2|A|-o(n) \\
        &= (r-1)n+|A|-o(n)\\
        &\ge (r-1)n+(r-1)(1-c)n-o(n).
    \end{aligned}
\end{equation*}
Since
\begin{equation*}
	(r-1)+(r-1)(1-c) > (3r-4)-(2r-2)c > (2r-3)-(r-2)c
\end{equation*}
holds for $c \in (\frac{r-2}{r-1}, \frac{r-1}{r})$, we have $e(G) > f(r,c) n +o(n)$ for sufficiently large $n$, a contradiction with \eqref{EUpBound}.

So there exists  $a^*\in A \cap X$, say $a^* \in A_1\cap X$, such that $|N(a^*)\cap (Y\setminus B)|=1$, say $u^* \in N(a^*)\cap (Y\setminus B)$.
For any $w\in (A\setminus A_1) \cap X$, since $G$ is $K_r$-semi-saturated, either $a^*w\in E(G)$ or $|N(a^*) \cap N(w) \cap (V(G) \setminus B) | \, \geq 1$ by \eqref{eq:Nv1v2B}.
Partition $(A\setminus A_1) \cap X$ into the following three sets:
\begin{equation*}
	\begin{aligned}
		S_1 &= \{w\in (A\setminus A_1) \cap X: a^*w\in E(G)\}, \\
		S_2 &= \{w\in (A\setminus A_1) \cap X: a^*w\notin E(G), N(a^*) \cap N(w) \cap (Y \setminus B)  \, = \emptyset \}, \\
		S_3 &= \{w\in (A\setminus A_1) \cap X: a^*w\notin E(G), N(a^*) \cap N(w) \cap (Y \setminus B) = \{u^*\}\}.
	\end{aligned}
\end{equation*}

Since $a^*\in X$, we have $d(a^*)<M$ which implies $|S_1|\, \le M = o(n)$.
If $w \in S_2$, then $(N(a^*)\cap N(w))\setminus B\subseteq X$ which implies $|S_2|\, \le M^2 = o(n)$. Then
\begin{equation*}
	|S_3|\, = | (A\setminus A_1) \cap X| - |S_1| - |S_2| \ge (r-2)(1-c)n - o(n),
\end{equation*}
which implies $d(u^*)\, \ge (r-2)(1-c)n - o(n)$ and $u^*$ is completely joined to all but $o(n)$ vertices in $(A \setminus A_1) \cap X$.
\vspace{0.3em}

\begin{claim}\label{clm:T15-lb-residual-2}
	For any $w' \in (A \setminus A_1) \cap X \cap N(u^*)$, $w'$ has at least two neighbours in $Y\setminus B$.
\end{claim}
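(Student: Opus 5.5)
We argue by contradiction, using the maximum degree bound $\Delta(G)\le cn$ at $u^*$. The idea is that if $w'$ had only $u^*$ as a neighbour in $Y\setminus B$, then $u^*$ would be forced to dominate almost all of $A$, which is too many neighbours when $c<\frac{r-1}{r}$.

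Let $w'\in (A\setminus A_1)\cap X\cap N(u^*)$, say $w'\in A_j$ with $j\neq 1$. Since $N(w')\cap B=B\setminus\{b_j\}$ has size $r-2$ and $\delta_Y(X)=r-1$, the vertex $w'$ already has at least one neighbour in $Y\setminus B$, namely $u^*$. Suppose for contradiction that $N(w')\cap (Y\setminus B)=\{u^*\}$.

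\textbf{Step 1: discard the few nearby vertices of $A_1\cap X$.} Since $w'\in X$ and every vertex of $X$ has degree less than $M$, at most $M+M^2=o(n)$ vertices of $X$ lie within distance $2$ of $w'$ in $G[X]$.

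\textbf{Step 2: every remaining $a\in A_1\cap X$ is adjacent to $u^*$.} Take $a\in A_1\cap X$ at distance greater than $2$ from $w'$ in $G[X]$. Then $aw'\notin E(G)$, so $G+aw'$ contains a $K_r$. Its $r-2$ vertices other than $a,w'$ form a clique inside $N(a)\cap N(w')$, and this common neighbourhood is contained in $Y$, because $a$ and $w'$ have no common neighbour in $X$. The possible members are as follows.
\begin{itemize}
\item Inside $B$, the common neighbours are exactly $B\setminus\{b_1,b_j\}$, which has only $r-3$ elements by \eqref{eq:Nv1v2B}.
\item Inside $Y\setminus B$, the only neighbour of $w'$ is $u^*$.
\end{itemize}
Hence the clique must be exactly $(B\setminus\{b_1,b_j\})\cup\{u^*\}$. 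In particular $au^*\in E(G)$.

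\textbf{Step 3: the neighbourhood of $u^*$ covers almost all of $A$.} By Step 2, $u^*$ is adjacent to all but $o(n)$ vertices of $A_1\cap X$. We already know that $u^*$ is adjacent to all but $o(n)$ vertices of $(A\setminus A_1)\cap X$. Since $|Y|=o(n)$ and the sets $A_i$ are disjoint, \eqref{eq:Ai} gives
\[
d(u^*)\ \ge\ |A|-o(n)\ \ge\ (r-1)(1-c)n-o(n).
\]

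\textbf{Step 4: contradiction.} In the range under consideration we have $c<\frac{r-1}{r}$, which is equivalent to $(r-1)(1-c)>c$. So for $n$ large, $d(u^*)>cn\ge\Delta(G)$, a contradiction. Therefore $w'$ has at least two neighbours in $Y\setminus B$.

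The only delicate point is Step 2: one must check that the missing edge $aw'$ is truly forced to use $u^*$. This relies on the structure of $N(\cdot)\cap B$ for vertices in different classes $A_1$ and $A_j$, and on the fact that $a$ and $w'$ have no common neighbour in $X$.
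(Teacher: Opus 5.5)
Your proof is correct and follows the paper's route: assuming $N(w')\cap(Y\setminus B)=\{u^*\}$, you rerun the $a^*$-argument with $w'$ in place of $a^*$. This forces $u^*$ to be adjacent to almost all of $A_1\cap X$ as well, so $d(u^*)\ge (r-1)(1-c)n-o(n)>cn$ for $c<\frac{r-1}{r}$. You spell out explicitly, via the distance-$2$ ball around $w'$ in $G[X]$, the step the paper compresses into ``by the same argument''.
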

\noindent\textbf{Proof of Claim \ref{clm:T15-lb-residual-2}. }Suppose there is $w' \in (A \setminus A_1) \cap X \cap N(u^*)$ such that $|N(w')\cap (Y\setminus B)|=1$. By replacing $a^*$ with $w'$ and  the same argument as above, we have $u^*$ is adjacent to all but $o(n)$ vertices in $ A\cap X$, which implies $d(u^*)\, \ge (r-1)(1-c)n - o(n) > cn\ge \Delta$ for $c < \frac{r-1}{r}$, a contradiction. $\hfill\square$
\vspace{0.3em}

By Claim \ref{clm:T15-lb-residual-2}, all but $o(n)$ vertices in $(A \setminus A_1) \cap X$ have at least two neighbours in $Y\setminus B$, and all vertices in $A_1 \cap X$ have at least one neighbour in $Y\setminus B$. Hence
\begin{equation}\label{eq:G-lower-bound-2}
    \begin{aligned}
        e(G) &\ge e(C,B)+e(A\cap X,Y\setminus B)\\
        &\ge (r-1)n-|A|+|A_1|+2\sum_{i=2}^{r-1}|A_i|-o(n)\\
        &\ge (r-1)n+\sum_{i=2}^{r-1}|A_i|-o(n)\\
        &\ge (r-1)n+(r-2)(1-c)n-o(n)\\
		& = (2r-3-(r-2)c)n-o(n).
    \end{aligned}
\end{equation}

\begin{cor}\label{cor:T15-2}
	For $\frac{r-3/2}{r-1/2}<c<\frac{r-1}{r}$, $A_{r-2}(c)=2r-3-(r-2)c$.
\end{cor}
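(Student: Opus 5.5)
The plan is to combine the lower bound \eqref{eq:G-lower-bound-2}, just derived, with the upper bound of Proposition~\ref{prop:T15-upper} case \textbf{(II)}, and then pass to $A_{r-2}(c)$ via Theorem~\ref{T14}, exactly as in the proofs of Corollaries~\ref{cor:T15-1} and~\ref{cor:T15-4}.

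\textbf{Lower bound.} Fix $c\in(\frac{r-3/2}{r-1/2},\frac{r-1}{r})$. This lies inside the range $\frac{r-2}{r-1}<c<\frac{r-1}{r}$ in which the preceding argument was carried out. Take a minimum $K_r$-semi-saturated $G$ with $\Delta(G)\le cn$. By Lemma~\ref{lem:T15-lb-delta-small} and the remark following Corollary~\ref{cor:T15-1}, we have $\delta_Y(X)=r-1$, so the structure $B$, $A_0$, $A_1,\dots,A_{r-1}$ is available.

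\begin{itemize}
\item If every $a\in A\cap X$ has at least two neighbours in $Y\setminus B$, the first computation gives $e(G)\ge (2r-2-(r-1)c)n-o(n)$. Since $2r-2-(r-1)c>2r-3-(r-2)c$ for $c<1$, this contradicts \eqref{EUpBound}.
\item Otherwise the vertex $a^*$ exists. Then Claim~\ref{clm:T15-lb-residual-2}, which uses only $c<\frac{r-1}{r}$, yields \eqref{eq:G-lower-bound-2}, namely $e(G)\ge (2r-3-(r-2)c)n-o(n)$.
\end{itemize}

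One point needs checking: the bound $|A_i|\ge(1-c-o(1))n$ used in the last step of \eqref{eq:G-lower-bound-2}. It comes from \eqref{eq:Ai}, which holds because each $b_i$ misses all of $A_i$ but has degree at most $cn$. With this, $ssat^{cn}(n,K_r)\ge(2r-3-(r-2)c)n-o(n)$.

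\textbf{Upper bound and identification.} Proposition~\ref{prop:T15-upper}\textbf{(II)} gives $ssat^{cn}(n,K_r)\le(2r-3-(r-2)c)n+O(1)$. Hence $\lim_n ssat^{cn}(n,K_r)/n=2r-3-(r-2)c$ for every $c$ in the open interval.

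To conclude $A_{r-2}(c)=2r-3-(r-2)c$, I apply Theorem~\ref{T14}. That requires continuity of $A_{r-2}$ at $c$. By Theorem~\ref{T13}, the discontinuities form a discrete sequence, so $A_{r-2}$ is continuous at all but finitely many points of the interval, and at those points $A_{r-2}(c)=2r-3-(r-2)c$.

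For the finitely many remaining points, I use right-continuity of $A_{r-2}$ (Theorem~\ref{T13}) together with continuity of the linear function $2r-3-(r-2)c$; this shows the two agree at every point of the open interval. Alternatively, the direct inequalities of the proof of Theorem~\ref{T14} can be applied at nearby $c\pm\delta$ inside the interval. Finally, note that $2r-3-(r-2)c=r-1+(r-2)(1-c)$, which matches the statement in Theorem~\ref{T15}.

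\textbf{Main obstacle.} The main obstacle is not the inequality itself but making sure the case analysis leading to \eqref{eq:G-lower-bound-2} really applies on this subinterval. In particular, the $o(n)$ losses from $S_1$, $S_2$ and from $X$ versus $Y$ must be uniform, and the contradiction in the two-neighbour case must be strict for $c$ in this range. The continuity bookkeeping needed to invoke Theorem~\ref{T14} is routine.
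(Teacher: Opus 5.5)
Your proposal is correct and follows the paper's own proof: the lower bound \eqref{eq:G-lower-bound-2}, the matching upper bound from Proposition~\ref{prop:T15-upper}\textbf{(II)}, and Theorem~\ref{T14} to identify the limit with $A_{r-2}(c)$. Your additional step for the continuity hypothesis of Theorem~\ref{T14} fills in a point the paper passes over silently. That step uses the finiteness of possible discontinuities in the interval from Theorem~\ref{T13}, and then right-continuity at the remaining points.
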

\noindent\textbf{Proof. }By \eqref{eq:G-lower-bound-2} we have $e(G)\ge (2r-3-(r-2)c)n-o(n)$. By Proposition~\ref{prop:T15-upper} and Theorem~\ref{T14}, $A_{r-2}(c)=f(r,c)$ for $c \in (\frac{r-3/2}{r-1/2}, \frac{r-1}{r})$. \q
\vspace{0.5em}

The final remaining case is $\frac{r-2}{r-1} < c < \frac{r-3/2}{r-1/2}$. Since $\Delta \leq cn$, at most $cn-(r-2)(1-c)n+o(n)$ vertices of $A_1\cap X$ are adjacent to~$u^*$. Since all vertices in $A_1 \cap X$ have at least one neighbour in $Y\setminus B$, we consider two cases.

\medskip\noindent\textbf{Case 1.} There exists $x^* \in (A_1\cap X)\setminus N(u^*)$ such that $|N(x^*)\cap (Y\setminus B)|=1$.

Let  $y^*\in N(x^*)\cap (Y\setminus B)$. By replacing $u^*$ with $y^*$ and  the same argument as above,
we have $y^*$ is completely joined to all but $o(n)$ vertices in $(A \setminus A_1) \cap X$.
Let $A_{1}(\xi) = A_1 \cap X \cap N(\xi)$ for $\xi \in \{u^*,y^*\}$ and $A^*= (A \setminus A_1) \cap X \cap N(u^*) \cap N(y^*) $. Then $|A_1(\xi)|\, \le cn-(r-2)(1-c)n+o(n)$ and $|A^*| \, \ge (r-2)(1-c)n - o(n)$.

By Lemma~\ref{lem:T15-lb-bridge} and \eqref{eq:Ai}, $|A_1 \cap X|\ge (1-c)n-o(n)$.
As $c < \frac{r-3/2}{r-1/2}$, we have $2c-2(r-2)(1-c) < 1-c $ which implies there exists $ a' \in (A_1 \cap X) \setminus (N(u^*) \cup N(y^*))$. Let $A'=(N[a']\cup N(N(a')))\cap X$. Then $|A'|\le M+M^2=o(n)$.
For any $z\in A^*\setminus A'$,  $N(z) \cap (Y \setminus B) \supsetneq \{u^*,y^*\}$;  otherwise $a'z \not \in E(G)$ and $|N(a' ) \cap N(z)| =r-3$, a contradiction with $G$ being $K_r$-semi-saturated.
Therefore all but $o(n)$ vertices in $(A \setminus A_1) \cap X $ have at least three neighbours in $Y\setminus B$. Hence
\begin{equation*}
    \begin{aligned}
        e(G) &\ge e(C,B)+e(A\cap X,Y\setminus B)\\
        &\ge (r-1)n-|A|+|A_1|+3\sum_{i=2}^{r-1}|A_i|-o(n)\\
        &\ge (r-1)n+2\sum_{i=2}^{r-1}|A_i|-o(n)\\
        &\ge (r-1)n+2(r-2)(1-c)n-o(n)\\
        &> rn > f(r,c) n +o(n)
    \end{aligned}
\end{equation*}
for $ c \in (\frac{r-2}{r-1}, \frac{r-3/2}{r-1/2})$ and $n$ sufficiently large, a contradiction with \eqref{EUpBound}.
\vspace{0.2em}

\medskip\noindent\textbf{Case 2.} For every $x\in (A_1\cap X)\setminus N(u^*)$,  $|N(x)\cap (Y\setminus B)|\ge 2$.

Let $A_{1,1} = A_1 \cap X \cap N(u^*)$ and $A_{1,2} = (A_1 \cap X) \setminus A_{1,1}$. Then $|A_{1,1}| \le cn-(r-2)(1-c)n+o(n)$. Since every $x\in A_{1,2}$ has at least two neighbours in $Y\setminus B$, we have
\begin{equation}\label{eq:G-lower-bound-3}
    \begin{aligned}
        e(G) &\ge e(C,B)+e(A\cap X,Y\setminus B)\\
        &\ge (r-1)n-|A|+|A_{1,1}|+2(|A|-|A_{1,1}|)-o(n)\\
        &\ge (r-1)n+|A|-|A_{1,1}|-o(n)\\
        &\ge (r-1)n+(r-1)(1-c)n-\bigl(cn-(r-2)(1-c)n\bigr)-o(n)\\
        &\ge \bigl((3r-4)-(2r-2)c\bigr)n-o(n).
    \end{aligned}
\end{equation}

\begin{cor}\label{cor:T15-3}
	For $\frac{r-2}{r-1}<c<\frac{r-3/2}{r-1/2}$, $A_{r-2}(c)=(3r-4)-(2r-2)c$.
\end{cor}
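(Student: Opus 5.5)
The argument mirrors Corollaries~\ref{cor:T15-1} and~\ref{cor:T15-2}: combine the lower bound just established with the matching upper bound from Proposition~\ref{prop:T15-upper}, and pass to the limit through Theorem~\ref{T14}. Fix $c\in(\frac{r-2}{r-1},\frac{r-3/2}{r-1/2})$ and let $G$ be an extremal $K_r$-semi-saturated graph with $\Delta(G)\le cn$, as in the setup of this subsection.

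The first step is the lower bound. By Lemma~\ref{lem:T15-lb-delta-small} and the remark following Corollary~\ref{cor:T15-1}, we have $\delta_Y(X)=r-1$ for $c>\frac{r-2}{r-1}$. The argument producing $a^*$, $u^*$ and Claim~\ref{clm:T15-lb-residual-2} uses only $c<\frac{r-1}{r}$, so it applies here. We then split into the two cases treated above. Case~1 ends in a contradiction with \eqref{EUpBound}, because it would force $e(G)>rn$, while $f(r,c)<r$ on this interval. Case~2 therefore holds, and \eqref{eq:G-lower-bound-3} gives $e(G)\ge((3r-4)-(2r-2)c)n-o(n)$. Hence $\liminf_{n\to\infty} ssat^{cn}(n,K_r)/n\ge 3r-4-(2r-2)c$.

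The second step is the upper bound. Proposition~\ref{prop:T15-upper}(III) gives $\limsup_{n\to\infty} ssat^{cn}(n,K_r)/n\le 3r-4-(2r-2)c$. So the limit exists and equals $f(r,c)$.

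To transfer this to $A_{r-2}(c)$, I would invoke Theorem~\ref{T14}, which needs continuity of $A_{r-2}$ at $c$. This is the only delicate point. I would obtain it as follows. By Theorem~\ref{T13}, the discontinuities of $A_{r-2}$ form a discrete sequence in $(0,1]$, so only finitely many lie in the interval, and every $c$ avoiding them satisfies $A_{r-2}(c)=f(r,c)$. Both $A_{r-2}$ (by Theorem~\ref{T13}) and $f$ are right continuous, and the exceptional points are isolated. Approaching any $c$ from the right through continuity points therefore gives $A_{r-2}(c)=f(r,c)$ for every $c$ in the open interval.

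Alternatively, the argument can avoid Theorem~\ref{T14}'s hypothesis by reproving its two halves directly. The lower-bound half of Theorem~\ref{T14}, applied at $c-\delta$, together with the graph bound above, already shows $A_{r-2}\le f$. The upper bound $A_{r-2}\ge f$ follows from Lemma~\ref{L25} combined with the graph lower bound at $c+\delta$, using continuity of $f$. Either route yields $A_{r-2}(c)=(3r-4)-(2r-2)c$.
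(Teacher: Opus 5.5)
Your proposal is correct and follows the paper's route. The lower bound comes from the Case~1/Case~2 analysis ending in \eqref{eq:G-lower-bound-3}, the upper bound from Construction~(III) of Proposition~\ref{prop:T15-upper}, and Theorem~\ref{T14} identifies the limit with $A_{r-2}(c)$. Your one addition is an explicit check of the continuity hypothesis of Theorem~\ref{T14}, either via right continuity and the discreteness of discontinuities from Theorem~\ref{T13} or by rerunning its two halves at $c\mp\delta$; the paper's proof invokes Theorem~\ref{T14} without checking this hypothesis.
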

\noindent\textbf{Proof. }By \eqref{eq:G-lower-bound-3} we have $e(G)\ge ((3r-4)-(2r-2)c)n-o(n)$. By Proposition~\ref{prop:T15-upper} and Theorem~\ref{T14}, $A_{r-2}(c)=f(r,c)$ for $c \in (\frac{r-2}{r-1}, \frac{r-3/2}{r-1/2})$. \q
\vspace{0.5em}

\noindent\textbf{Proof of Theorem~\ref{T15}. }By Corollaries~\ref{cor:T15-1}, \ref{cor:T15-4}, \ref{cor:T15-2},~and \ref{cor:T15-3}, we are done. \q
\vspace{0.5em}

Subsections~\ref{subsec:T15-upper} and~\ref{subsec:T15-lb} together prove Theorem~\ref{T15}, hence $A_{r-2}(c)=f(r,c)$ on each open $c$-interval stated there.
It follows that, on every such interval, the linear program~\eqref{eq:1.3} attains the value $A_{r-2}(c)$ among all $(r-2)$-intersecting hypergraphs $\mathcal{H}$ with $1/\nu^*(\mathcal{H})\le c$.
The incidence matrices exhibiting the corresponding feasible solutions are recorded in Appendix~\ref{app:T15-matrices}.
We emphasize that Proposition~\ref{prop:T15-upper} is proved entirely by the graph constructions in Appendix~\ref{app:T15-constructions}; the hypergraph matrices in Appendix~\ref{app:T15-matrices} are only a visualization of the neighborhood patterns occurring in Theorem~\ref{T15}, and do not themselves constitute a proof of the proposition.

\section{Specific maximum degree restriction}

In this section, we consider the specific maximum degree restriction and its applications. We prove Theorem~\ref{T16} first.
\vspace{0.5em}

\noindent \textbf{Proof of Theorem~\ref{T16}. }
Let $$T:=(r-1)n-\binom{r}{2}=(r-1)(n-r)+\binom{r}{2}.$$
Let $B=\{b_1,\ldots,b_r\}$ and partition $V(G)\setminus B$ into $r$ parts $P_1,\ldots,P_r$ with sizes differing by at most~$1$.
Let $G$ be a graph on $n$ vertices defined as follows: $G[B]\cong K_r$; for each $1\le i\le r$, join every vertex of $P_i$ to every vertex of $B\setminus\{b_i\}$;
Then $\Delta(G)=n-1-\lfloor\frac{n-r}{r}\rfloor$ and $G$ is $K_r$-semi-saturated.
Counting edges gives
\[
e(G)=\binom{r}{2}+\sum_{i=1}^r |P_i|(r-1)=(r-1)(n-r)+\binom{r}{2}=T,
\]
so $ssat^{\Delta}(n,K_r)\le T$ for any $\Delta\ge n-1-\lfloor\frac{n-r}{r}\rfloor$.

When $(r,\Delta)=(4,n-2)$, let $G^*=(K_2 \cup K_1) \vee ( K_{n-5}^c \cup K_2)$, where $K_{n-5}^c$ is the complement of $K_{n-5}$. Then  $G^*$ is $K_4$-semi-saturated with $\Delta(G^*)=n-2$ and  $e(G^*)=T-1=3n-7$. So $ssat^{n-2}(n,K_4)\le T-1$.

For the lower bound, let $G$ be a $K_r$-semi-saturated graph on $n$ vertices with $\Delta(G)\le n-2$.
We prove $e(G)\ge T$ except for $(r,\Delta)=(4,n-2)$.

For $u\in V(G)$ and $A\subseteq V(G)$ let $N_A(u):=N(u)\cap A$ and $d_A(u):=|N_A(u)|$.
Since $G$ is $K_r$-semi-saturated and $\Delta(G)\le n-2$, we have $\delta(G)\ge r-1$.
We distinguish three cases according to $\delta(G)$.

\medskip
\noindent\textbf{Case 1.} $\delta(G)\ge r+1$.
If $\delta(G)\ge 2r$, then $e(G)\ge n\delta(G)/2\ge rn>T$ and we are done. So we assume $\delta(G)=o(n)$.
Let $v\in V(G)$ with $d(v)=\delta(G)$.
For every $u\in V(G)\setminus N[v]$, the graph $G+uv$ contains a copy of $K_r$, so $d_{N(v)}(u)\ge r-2$ and $u$ has at least $\delta(G)-d_{N(v)}(u)$ neighbours in $V(G)\setminus N[v]$.
Summing over $u\in V(G)\setminus N[v]$ and adding the edges between $N(v)$ and $V(G)\setminus N[v]$ gives
\begin{equation*}
    \begin{aligned}
e(G)&\ge \sum_{u\in V(G)\setminus N[v]}d_{N(v)}(u)+\frac{1}{2}\sum_{u\in V(G)\setminus N[v]}(\delta(G)-d_{N(v)}(u))\\
&=\sum_{u\in V(G)\setminus N[v]}\left(d_{N(v)}(u)+\frac{1}{2}(\delta(G)-d_{N(v)}(u))\right)\\
&\ge \frac{(\delta(G)+r-2)(n-\delta(G)-1)}{2}
\ge \Bigl(r-\tfrac{1}{2}\Bigr)(n-2r)>T
\end{aligned}
\end{equation*}
for sufficiently large $n$.

\medskip
\noindent\textbf{Case 2.} $\delta(G)=r-1$.
Let $v\in V(G)$ with $d(v)=r-1$.
Pick an arbitrary $x\in V(G)\setminus N[v]$.
Since $G$ is $K_r$-semi-saturated, we have $G[N(x)\cap N(v)]\supseteq K_{r-2}$; let $u$ denote the unique vertex of $N(v)$ not in this $K_{r-2}$. So $G[N(v)\setminus\{u\}]\cong K_{r-2}$.
Write $W:=V(G)\setminus N[v]$ and $t:=e\bigl(\{u\},N(v)\setminus\{u\}\bigr)$.
We distinguish three subcases according to~$t$.

\medskip
\noindent\textbf{Subcase 2.1.} $t=0$.
In this case, $N(v)\setminus\{u\}\subseteq N(w)$ for every $w\in W$ which implies $W$ and $N(v)\setminus\{u\}$ are completely joined.

Set $S:=N(u)\cap W$ and $S_1:=W\setminus S$. For $u_1\in N(v)\setminus \{u\}$, $G+uu_1$ contains a new copy of $K_r$, which implies $G[S]\supseteq K_{r-2}$. For any $z\in S_1$(possibly $S_1=\emptyset$), $G+uz$ contains a new copy of $K_r$, which implies $d_S(z)\ge r-2$. Therefore,
\[
\begin{aligned}
e(G) &\ge \binom{r-2}{2}+(r-1)+(r-2)|W|+\binom{r-2}{2}+|S|+e(S,S_1)\\
&\ge (r-1)+(n-r)(r-2)+2\binom{r-2}{2}+|S|+(r-2)|S_1|\\
&\ge (n-r+1)(r-1)+(r-2)(r-3).
\end{aligned}
\]
Since $(r-2)(r-3)\ge (r-1)^2-\binom{r}{2}$ if and only if $r\ge 5$, this gives $e(G)\ge T$ for $r\ge 5$. When $r=4$, we obtain $e(G)\ge 3n-7=T-1$.
Equality at $T-1$ is attained when $r=4$ by the  extremal graph:
$G^*=(K_2 \cup K_1) \vee ( K_{n-5}^c \cup K_2)$ where $\Delta(G^*)=n-2$.

\medskip
\noindent\textbf{Subcase 2.2.} $1\le t\le r-3$.
Pick $u_1\in N(u)\cap (N(v)\setminus\{u\})$.
For every $w\in W$, since $G+wv$ contains a new copy of $K_r$, we have $w\in N(u_1)$, which implies $d(u_1)=n-1$, a contradiction to $\Delta(G)\le n-2$.

\medskip
\noindent\textbf{Subcase 2.3.} $t=r-2$, i.e.\ $G[N(v)]\cong K_{r-1}$.
Set $N(v)=\{b_1,\ldots,b_{r-1}\}$.
Partition $W$ into
\[
\begin{aligned}
\widetilde{A}_0&:=\{x\in W: N(x)\cap N(v)=N(v)\},\\
\widetilde{A}_i&:=\{x\in W: N(x)\cap N(v)=N(v)\setminus\{b_i\}\}\qquad (1\le i\le r-1),
\end{aligned}
\]
and write $\widetilde{A}:=\bigcup_{i=1}^{r-1}\widetilde{A}_i$. Since $\Delta\le n-2$, $\widetilde{A}_i\not=\emptyset$ for $1\le i\le r-1$.
For $a_i\in \widetilde{A}_i$ and $a_j\in \widetilde{A}_j$ with $1\le i\neq j\le r-1$, if $a_ia_j\notin E(G)$, then $|N(a_i)\cap N(a_j)|\ge r-2$ which implies $N(a_i)\cap N(a_j)\cap  (V(G)\setminus N[v])\not=\emptyset$.
Let $G'=G[V(G)\setminus N[v]]$. Then there is a component~$C$ of~$G'$ such that $\widetilde{A}\subseteq V(C)$.

\smallskip
First, suppose that $V(C)\cap \widetilde{A}_0\not=\emptyset$ or $C$ has a cycle.
Then
\[
e(G)\ge |\widetilde{A}_0|(r-1)+\Bigl[|\widetilde{A}|(r-2)+\binom{r-1}{2}+|\widetilde{A}|-1+1\Bigr]
= (n-r)(r-1)+\binom{r}{2}=T.
\]

\smallskip
Now suppose that $C$ is a tree and $V(C)\cap \widetilde{A}_0=\emptyset$. Let
 $x^* \in \widetilde{A}_i$, say $i=1$, be a leaf of $C$ and $y^* \in \widetilde{A}_j$ its neighbour in $C$. Then $y^*x\in E(G)$ for  all $x\in \widetilde{A} \setminus \widetilde{A}_1$; otherwise $G+x^*x$ would not create a new copy of $K_r$, a contradiction.
Since $r-1 \geq 3$, there are $x_1 \in \widetilde{A}_{i_1}$ and $x_2 \in \widetilde{A}_{i_2} $ such that $i_1,i_2,j$ are all different and $y^*\in N(x_1) \cap N(x_2)$. Then $|N(x_1) \cap N(x_2) \cap N(y^*) \cap N(v) |= r-4$.  Since $C$ is a tree, we have that $x_1x_2\notin E(G)$.
As $G+x_1x_2$ contains a copy of $K_r$, there is $w^* \in (\widetilde{A}_0 \cup \widetilde{A})\setminus\{y^*\}$ such that $w^*\in N(x_1)\cap N(x_2)$
 which means either $C$ contains a vertex of $\widetilde{A}_0$ or $C$ has a cycle, a contradiction.

\medskip
\noindent\textbf{Case 3.} $\delta(G)=r$.
Let $v\in V(G)$ with $d(v)=r$. Since $G$ is $K_r$-semi-saturated, $|N(x)\cap N(v)|\ge r-2$ for any $x\in V(G)\setminus N[v]$.
Partition $V(G)\setminus N[v]$ into
\[
B:=\{x\in V(G): d_{N(v)}(x) \geq r-1\},\qquad
C:=\{x\in V(G): d_{N(v)}(x)=r-2\},
\]
and set $b:=|B|$ and $c:=|C|$. Then $b+c=n-r-1$.

\begin{claim}\label{clm:T16-b} We can assume
$b\le 4r-8$.
\end{claim}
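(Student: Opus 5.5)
We argue that if $b\ge 4r-8$ then $e(G)\ge T$, so the lower bound already holds and we may assume $b\le 4r-8$ in what follows. The plan is a direct edge count in three disjoint pieces:
\begin{enumerate}
\item[(i)] the $r$ edges at $v$;
\item[(ii)] the edges inside $N(v)$;
\item[(iii)] the edges with at least one endpoint in $W:=V(G)\setminus N[v]=B\cup C$, counted from the $W$ side.
\end{enumerate}
We will use only the hypothesis $\delta(G)=r$ and the semi-saturation property.

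\textbf{Piece (ii).} Take any $x\in W$. Since $xv\notin E(G)$, the graph $G+xv$ contains a copy of $K_r$. Its $r-2$ vertices other than $x,v$ form a $K_{r-2}$ inside $N(x)\cap N(v)\subseteq N(v)$. Since $W\neq\emptyset$ for large $n$, this gives $e(G[N(v)])\ge\binom{r-2}{2}$.

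\textbf{Piece (iii).} For $x\in W$ write $d_{N(v)}(x)$ and $d_W(x)$ for its numbers of neighbours in $N(v)$ and in $W$. Since $x\not\sim v$ and $d(x)\ge\delta(G)=r$, we have $d_{N(v)}(x)+d_W(x)\ge r$. Counting each $N(v)$--$W$ edge once and each edge inside $W$ with weight $\tfrac12$ from each endpoint, the number of edges meeting $W$ is
\[
\sum_{x\in W}\Bigl(d_{N(v)}(x)+\tfrac12 d_W(x)\Bigr)\ \ge\ \sum_{x\in W}\frac{d_{N(v)}(x)+r}{2}.
\]
The lower bound depends only on $d_{N(v)}(x)$:
\begin{itemize}
\item[$\cdot$] for $x\in C$, $d_{N(v)}(x)=r-2$, so $x$ contributes at least $r-1$;
\item[$\cdot$] for $x\in B$, $d_{N(v)}(x)\ge r-1$, so $x$ contributes at least $r-\tfrac12$.
\end{itemize}
Hence the edges meeting $W$ number at least $(r-1)(b+c)+\tfrac b2=(r-1)(n-r-1)+\tfrac b2$.

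\textbf{Adding up.} The three pieces are disjoint, so
\[
e(G)\ \ge\ r+\binom{r-2}{2}+(r-1)(n-r-1)+\frac b2 .
\]
Comparing with $T=(r-1)(n-r)+\binom r2$, the inequality $e(G)\ge T$ holds as soon as
\[
\frac b2\ \ge\ (r-1)+\binom r2-r-\binom{r-2}{2}.
\]
The right-hand side equals $(r-1)+\tfrac{4r-6}{2}-r=2r-4$. So $b\ge 4r-8$ already gives $e(G)\ge T$, and otherwise $b\le 4r-8$ as claimed.

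\textbf{Main obstacle.} The delicate step is piece (ii). Without the $\binom{r-2}{2}$ edges inside $N(v)$, the count would only give the weaker threshold $b\ge r(r-1)-2$. One must also check that these edges are genuinely disjoint from those counted in (i) and (iii), which holds because (ii) lies entirely inside $N(v)$. The bookkeeping in (iii) is routine but must treat vertices with $d_{N(v)}(x)=r$ correctly; the averaged bound $\frac{d_{N(v)}(x)+r}{2}$ handles this uniformly.
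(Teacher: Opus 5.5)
Your proof is correct and matches the paper's argument: the same split into the $r$ edges at $v$, the $\binom{r-2}{2}$ edges of the $K_{r-2}$ inside $N(v)$, and a contribution of at least $r-\tfrac12$ from each vertex of $B$ and $r-1$ from each vertex of $C$, giving $e(G)\ge (r-1)(n-r)+1+\tfrac b2+\binom{r-2}{2}$. Your bound $\tfrac12\bigl(d_{N(v)}(x)+r\bigr)$ spells out the per-vertex contributions that the paper leaves implicit, and your non-strict threshold ($b\ge 4r-8\Rightarrow e(G)\ge T$, versus the paper's strict version) is equally sufficient, since the goal in this case is only $e(G)\ge T$.
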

\noindent\textbf{Proof of Claim \ref{clm:T16-b}. }
If $b > 4r-8$, then
\begin{align*}
e(G) &\ge r+b\Bigl(r-\frac{1}{2}\Bigr)+c(r-1)+\binom{r-2}{2}\\
&=(n-r)(r-1)+1+\frac{b}{2}+\binom{r-2}{2}\\
&>(n-r)(r-1)+1+2r-4+\binom{r-2}{2}\\
&=(n-r)(r-1)+\binom{r}{2}=T
\end{align*}
and we are done.
$\hfill\square$

By the same argument as that of Claim~\ref{clm:T16-b}, we can assume $\Delta\bigl(G[B\cup C]\bigr)\le 5r$, otherwise $e(G)>T$ and we are done.
Since $b\le 4r-8$, we have $c\ge n-5r$.
We consider two subcases.

\medskip
\noindent\textbf{Subcase 3.1.} All vertices of $C$ have the same neighbourhood in~$N(v)$.
Fix $c_0\in C$ and write $S:=N(c_0)\cap N(v)$. Then $N(c)\cap N(v)=S$ for every $c\in C$ and $N(v)=S\cup\{u_1,u_2\}$, where  $u_1,u_2\in N(v)\setminus S$.
Since $\Delta(G)\le n-2$, there exists
\[
w\in B\cup\{u_1,u_2\}
\quad\text{with}\quad
N(w)\not\supseteq S.
\]
Then $|N(w)\cap N(x)\cap (V(G)\setminus N(v))|\ge 1$ for any $x\in C\setminus N[w]$ since $G$ is $K_r$-semi-saturated.
Let $M:=N_{B\cup C}(w)$. Then $|M| \, \le 5r$  and $|N_C(M)|\,\ge |C|-|M|=c-|M| \geq n-10r$.
Hence
\[
e(G)\ge (n-r-|M|)(r-1)+\frac{n-10r}{2}>(n-r)(r-1)+\binom{r}{2}=T
\]
for sufficiently large $n$.

\medskip
\noindent\textbf{Subcase 3.2.} There exist $y,y'\in C$ with $N(y)\cap N(v)\neq N(y')\cap N(v)$.
Since  $|N(x)\cap N(v)|=r-2$ for any $x\in C$  and $|C|\, \ge n-5r$, by the pigeonhole principle, there is a subset $C_1\subseteq C$ with
\[
|C_1| \, \ge \frac{n-5r}{\binom{r}{r-2}}=\frac{2(n-5r)}{r(r-1)}
\]
such that $N(y)\cap N(v)= S$ for every $y\in C_1$.
Pick $c_2\in C\setminus C_1$. Then
$|N(c_2)\cap N(y)\cap (V(G)\setminus N(v))|\ge 1$ for any $y\in C_1\setminus N[c_2]$ since $G$ is $K_r$-semi-saturated.
Let $M:=N_{B\cup C}(c_2)$. Then $|M|\,\le 5r$ and $|N_{C_1}(M)| \, \ge |C_1| - |M| \, \ge \dfrac{2(n-5r)}{r(r-1)} - 5r$.
Hence
\[
e(G)\ge (n-r-|M|)(r-1)+\frac{n-5r}{r(r-1)}-\frac{5r}{2}>(n-r)(r-1)+\binom{r}{2}=T
\]
for sufficiently large $n$. This completes the proof of Theorem~\ref{T16}. \q
\vspace{0.5em}

We now prove Theorem~\ref{T18}.
Note that every ($K_s\vee F$)-saturated graph is $K_{s+t}$-semi-saturated. So the bounds from Section~2 and Theorem~\ref{T16} for $\Delta=n-2$ apply with $r=s+t$.
When $r=3$ we apply Theorem~\ref{T153} instead of Theorem~\ref{T16}; for $r\ge 4$ the lower bound of Theorem~\ref{T16} already enters the proof. We first have the following lemma.
\vspace{0.5em}

\begin{lemma}\label{clm:T18-conical}
	Let $F'$ be a graph. For any positive integers $p \leq q$ and $n$ sufficiently large, if $G$ is a graph on $n$ vertices and $G$ contains $p$ conical vertices $u_1,u_2,\ldots,u_p$, then $G$ is $(K_q \vee F')$-(semi)-saturated if and only if $H :=G-\{u_1,u_2,\ldots,u_p\}$ is $(K_{q-p} \vee F')$-(semi)-saturated.
\end{lemma}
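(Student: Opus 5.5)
By induction on $p$ it suffices to treat $p=1$: if $u_1,\dots,u_p$ are conical in $G$, then $u_2,\dots,u_p$ are conical in $G-u_1$, and we iterate with $q$ replaced by $q-1$. The convention $K_0\vee F'=F'$ covers the case $p=q$. So assume $G$ has a conical vertex $u$, set $H=G-u$, and write $K_q\vee F'=K_1\vee(K_{q-1}\vee F')$. The argument rests on one structural fact:
\begin{equation*}
\text{a subgraph } L\subseteq H \text{ yields a copy of } K_q\vee F' \text{ in } G \text{ using } u \quad\Longleftrightarrow\quad L\cong K_{q-1}\vee F' .
\end{equation*}
Since $u$ is adjacent to everything, $u$ together with any copy of $K_{q-1}\vee F'$ in $H$ forms a copy of $K_q\vee F'$ in $G$.

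\emph{Freeness.} We show that $G$ is $(K_q\vee F')$-free iff $H$ is $(K_{q-1}\vee F')$-free. One direction is immediate from the fact above. For the other, suppose $G$ contains a copy $Q$ of $K_q\vee F'$. If $u\in V(Q)$, then $Q-u$ contains $K_{q-1}\vee F'$, because every vertex of the join $K_q\vee F'$ is one of the $q$ clique vertices or a vertex of $F'$. Deleting a clique vertex leaves $K_{q-1}\vee F'$ exactly. If $u$ is instead a vertex $w$ of $F'$, we use the automorphism-free observation that $Q$ also contains a copy avoiding that role: replace $u$'s role by swapping with a clique vertex. This works because $u$ is adjacent to all of $V(Q)$, so relabel $u$ as a clique vertex and the old clique vertex as $w$; the clique vertex has full adjacency within $Q$, so all edges of $F'$ at $w$ are present. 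If $u\notin V(Q)$, then $Q\subseteq H$ already contains $K_{q-1}\vee F'$.

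\emph{Semi-saturation.} The nonedges of $G$ are exactly the nonedges of $H$, since $u$ is conical. Take a nonedge $e$. If $H+e$ has a new copy $L$ of $K_{q-1}\vee F'$ containing $e$, then $u$ joined to $L$ is a copy of $K_q\vee F'$ in $G+e$ containing $e$. Conversely, a copy $Q$ of $K_q\vee F'$ in $G+e$ containing $e$ gives, by the same relabelling (make $u$ a clique vertex if $u\in V(Q)$, and otherwise delete any clique vertex), a copy of $K_{q-1}\vee F'$ in $H+e$ still containing $e$. Here we must make sure the deleted vertex is not an endpoint of $e$. After the relabelling $u$ is a clique vertex and is not an endpoint of $e$, since $u$ has no nonedges. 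In the case $u\notin V(Q)$ we need a clique vertex not on $e$; this requires $q\ge 3$. When $q\le 2$ we instead delete a clique vertex and check directly that $e$ survives in the remainder.

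\textbf{Main obstacle.} The main obstacle is this last bookkeeping step: keeping $e$ inside the smaller copy when $u\notin V(Q)$ and both endpoints of $e$ are clique vertices with $q$ small. If that direction fails, I would handle it by noting that we only need "a new copy exists", and we could choose $Q$ to contain $u$. Any copy $Q$ can be modified by swapping some vertex not on $e$ for $u$, and $u$ can replace any vertex of $Q$ since $u$ is adjacent to all. Then we proceed as in the case $u\in V(Q)$. The hypothesis that $n$ is sufficiently large is not needed beyond ensuring that $H$ is nonempty.
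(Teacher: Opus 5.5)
Your freeness argument and your reduction to $p=1$ are fine. The semi-saturation step, however, has a genuine gap, and it is not where you locate it.

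\textbf{Where the gap is.} In the case $u\in V(Q)$ you swap $u$ with a clique vertex $w$. You check only that $u$ is not an endpoint of $e$; you never check that $e$ is still an edge of the relabelled copy.
\begin{itemize}
\item Suppose $w=x$ is an endpoint of $e=xy$ and $y$ occupies an $F'$-slot $b$. After the swap, $x$ occupies $u$'s former $F'$-slot $c$, and $xy$ is an edge of the new copy only if $cb\in E(F')$.
\item For $q\ge 2$ you can avoid this. Either choose $w\notin\{x,y\}$, or, if the clique is exactly $\{x,y\}$, note that the swap turns $e$ into a clique--$F'$ edge. (Your alternative of ``deleting a clique vertex and checking that $e$ survives'' already fails when $q=2$ and $e$ is the clique edge.)
\item For $q=1$ the unique clique vertex may be $x$, so the problem is unavoidable. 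Your fallback for $u\notin V(Q)$ routes through exactly this step.
\end{itemize}
The case $q=1$ is not marginal. It is the last step of your induction whenever $p=q$, and it is the step $G_{s-1}\to G_s$ used in the proof of Theorem~\ref{T18}.

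\textbf{Why this cannot be repaired for semi-saturation.} Take $F'=2K_2$ and $H=K_{2,m}$ with $m\ge 3$, parts $\{x,y\}$ and $\{z_1,\dots,z_m\}$. Let $G=K_1\vee H$ with conical vertex $u$.
\begin{itemize}
\item $G$ is $(K_1\vee 2K_2)$-semi-saturated, i.e.\ bowtie-semi-saturated. Adding $z_iz_j$ creates the bowtie with triangles $uz_iz_j$ and $uxz_k$ for some $k\notin\{i,j\}$. Adding $xy$ creates the bowtie with triangles $xyz_1$ and $xuz_2$.
\item $H$ is not $2K_2$-semi-saturated, because every edge of $H+xy$ meets $\{x,y\}$.
\end{itemize}
In the bowtie on $G+xy$, swapping $u$ with the centre $x$ produces exactly the failure described above: the resulting $2K_2$ is $\{xz_2,yz_1\}$, which does not use $xy$. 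The paper's own swap argument has the same blind spot. So the semi-saturated reading of the lemma is false when $p=q$.

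\textbf{What does work.} The saturated version holds, and it is all that Theorem~\ref{T18} needs. Once you have shown that $H$ is $(K_{q-1}\vee F')$-free, every copy of $K_{q-1}\vee F'$ in $H+e$ necessarily contains $e$. So you only need to produce \emph{some} copy in $H+e$, which swap-and-delete always does, and no tracking of $e$ is required.
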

\noindent\textbf{Proof. }Clearly $G$ is $(K_q \vee F')$-free if and only if $H$ is $(K_{q-p}\vee F')$-free. The implication ``$\Leftarrow$'' is immediate, so we only prove ``$\Rightarrow$''. Let $U:=\{u_1,u_2,\ldots,u_p\}$.

Let $xy$ be a nonedge in $H$ (and thus a nonedge in $G$).
Since $G$ is $(K_q \vee F')$-(semi)-saturated,  $G+xy$ contains a copy of $K_q\vee F'$, say $T$.
Fix a decomposition $V(T)=A\sqcup B$ with $|A|=q$ and $T[A]\cong K_q$, $T[B]\cong F'$. Let $V(T) \cap U = U^*$ with $|U^*|=p' \leq p$.

\smallskip
\noindent\textbf{Case 1:} $B\cap U^*=\emptyset$.
Then $B\subseteq V(H+xy)$, and also $A\setminus U^* \subseteq V(H+xy)$.
The induced subgraph $T-U^*$ has clique side $A\setminus U ^*$ of size $q-p' \geq q-p$, $F'$-side $B$, and still has all edges between them inherited from~$T$.
Thus $T-U^*\subseteq H+xy$ is a copy of $K_{q-p'}\vee F'$ which implies that $H+xy$ creates a new copy of $K_{q-p} \vee F'$.

\smallskip
\noindent\textbf{Case 2:} $B\cap U^*\not=\emptyset$.
Fix $u\in U^*\cap B$ and choose any $w\in A \setminus U^*$.
Since $u$ is conical, it is adjacent to all vertices of $(A\setminus\{w\})\cup(B\setminus\{u\})$.
Also $w$ is adjacent in $T$ to every vertex of $(A\setminus\{w\})\cup(B\setminus\{u\})$ because $T[A]$ is a clique and $w$ is joined completely to~$B$.
Thus $u$ and $w$ coincide on their neighbourhoods restricted to $V(T)\setminus\{u,w\}$.
Therefore we can exchange the roles of $u$ and $w$ inside the abstract isomorphism type $K_q\vee F'$: there exists another copy $T'\cong K_q\vee F'$ contained in $G+xy$, on the same vertex set as $T$ up to relabelling the distinguished $q$-clique, such that $u$ lies in the clique side of $T'$ rather than in its $F'$-side.
Replacing $T$ by $T'$ and repeating if necessary, we may assume $B\cap U^*=\emptyset$, and then Case~1 applies. \q

Now we are going to prove Theorem~\ref{T18}.

\vspace{0.5em}

\noindent\textbf{Proof of Theorem~\ref{T18}. }Write $H_s:=K_s\vee F$ and $r:=s+t$. Then every edge of $H_s$ belongs to a $K_r$. Since $s \geq 1$ and $t\ge 2$, we have $r\ge 3$. Let $G$ be a $H_s$-saturated graph on $n$ vertices with minimum number of edges. Then $G$ is $K_r$--semi-saturated. By \eqref{eq:1.2}, we have
\begin{equation}\label{eq:4.1}
	e(G)= \min \{ \, sat^{ n-2}(n,H_s) \, , \; sat^{n-1}_*(n,H_s) \, \} \, .
\end{equation}
Moreover
\begin{equation}\label{eq:T18-ssat-mono}
	\mathrm{sat}^{\Delta}(n,H_s) \ge \mathrm{ssat}^{\Delta}(n,H_s) \ge \mathrm{ssat}^{\Delta}(n,K_r)
\end{equation}
holds for all $\Delta$. Recall that
\begin{equation}\label{eq:T18-ssat-mono-2}
	sat(n,K_s \vee F')  \leq  s(n-s)+sat(n-s,F')+ \binom{s}{2}
\end{equation}
holds for all $F'$ and $n > |V(F')| \, +s-1$.

\vspace{0.5em}

By Theorems~\ref{T153} and~\ref{T16}, and \eqref {eq:T18-ssat-mono} and \eqref {eq:T18-ssat-mono-2} we have
	\begin{equation}\label{eq:sat-lower-bound}
		\begin{aligned}
			sat^{n-2}(n,H_s) & \geq ssat^{n-2}(n,H_s) \geq ssat^{n-2}(n,K_r) \\
			& \geq  (r-1)n- \binom{r}{2}-2= (t+s-1)n- \binom{t+s}{2} \\
			& = s(n-s) + (t-1)(n-s) - \binom{t}{2} -2 + \binom{s}{2} \\
			& > s(n-s) + sat(n-s,F) + \binom{s}{2} \geq sat(n,H_s)\,.
		\end{aligned}
	\end{equation}
	Combined with  \eqref{eq:4.1} we have $sat(n,H_s) = sat_*^{n-1}(n,H_s) $ which implies that $G$ contains a conical vertex $u_1$.
	Let $G_1=G-u_1$. Then $G_1$ is $H_{s-1}$-saturated by Lemma \ref{clm:T18-conical}.
	Repeating this process on $G_1$ as in  \eqref{eq:sat-lower-bound} by taking $n=n-1$ and $s=s-1$, we have $G_1$ contains a conical vertex $u_2$, and $G_2=G_1-u_2$ is $H_{s-2}$-saturated.
	Continuing this process $s$ times, we obtain a sequence $G =G_0\supseteq G_1 \supseteq G_2 \supseteq \cdots \supseteq G_s$ and conical vertices $u_1,u_2,\ldots,u_s$ in $G$. By Lemma \ref{clm:T18-conical} we have $e(G_s) \geq sat(n-s,F)$. Hence
	\begin{equation*}
	\begin{aligned}
		 sat(n,H_s) = e(G) = s(n-s)+ \binom{s}{2} + e(G_s) \geq s(n-s)+ sat(n-s,F)+ \binom{s}{2} \, .
	\end{aligned}
	\end{equation*}
Combined with  \eqref{eq:T18-ssat-mono-2} we have $sat(n,K_s \vee F) = s(n-s)+ sat(n-s,F)+ \binom{s}{2}  $ and we are done. \q
\vspace{0.5em}

\noindent\textbf{Remark. }We give an example to show that for any integer $t\geq 2$, there exists $F$ satisfying the condition in Theorem~\ref{T18}. Let $F = K_t \cup M$, where every edge of the graph $M$ belongs to a $K_t$. For sufficiently large $n$ let $G=K_{t-2} \vee (K_{|M|+1} \cup K_{n-t-|M|+1}^c)$. Not difficult to check that $G$ is $F$-saturated and
\begin{equation*}
	sat(n,F) \leq e(G) = (t-2)(n-t+2) + \binom{t-2}{2} + \binom{|M|+1}{2}
\end{equation*}
which implies that $F$ satisfies the condition in Theorem~\ref{T18}.

\section*{Acknowledgement}
This paper is supported by Beijing Natural Science Foundation (No.~QG26001); and by the National Natural Science Foundation of China (No.~12571372, 12401445); and by the Talent Fund of Beijing Jiaotong University (no. 2024-003).

\appendix

\section{Explicit graph constructions for Proposition~\ref{prop:T15-upper}}\label{app:T15-constructions}

We give explicit graph-theoretic realizations of Constructions~\textbf{(I)}--\textbf{(IV)} from Proposition~\ref{prop:T15-upper}.
In each case we specify vertices and edges, verify $\Delta(G)\le cn$ for $c$ in the stated interval, check $K_r$-semi-saturation, and count $e(G)$.
We simply omit rounding when assigning part sizes; since $n$ is large, this does not affect the limiting quantities.

\medskip
\noindent\textbf{Construction (I).}
Let $B=\{b_1,\ldots,b_r\}$ and partition $V(G)\setminus B$ into $A_1,\ldots,A_r$ as evenly as possible. Let $B$ induce a clique, and for each $i$ join every vertex of $A_i$ to every vertex of $B\setminus\{b_i\}$ (no edges between distinct parts $A_i,A_j$).
One can easily check that $\Delta(G)\le cn$ when $c>\frac{r-1}{r}$ and $n$ is sufficiently large. The graph is $K_r$-semi-saturated (every missing edge meets $B$ in a way that yields $r-2$ common neighbours in $B$) and
\[
e(G)=\binom{r}{2}+\sum_{i=1}^r |A_i|(r-1)=(r-1)(n-r)+\binom{r}{2}=(r-1)n+O(1).
\]

\medskip
\noindent\textbf{Construction (II).}
Let $B:=\{b_1,\ldots,b_{r-1},b_r,b_{r+1}\}$ induce a clique. Partition $V(G)\setminus B$ into disjoint sets $A_0,A_{1,1},A_{1,2},A_2,\ldots,A_{r-1}$, with no edges between distinct parts among these sets.
Join each vertex of $A_0$ to each of $b_1,\ldots,b_{r-1}$ (and to no vertex of $\{b_r,b_{r+1}\}$).
For each $j\in\{2,\ldots,r-1\}$, join each vertex of $A_j$ to each vertex of $\{b_1,\ldots,b_{r-1}\}\setminus\{b_j\}$.
Join each vertex of $A_{1,1}$ and each vertex of $A_{1,2}$ to each vertex of $\{b_2,\ldots,b_{r-1}\}$.
Join each vertex of $A_{1,1}\cup \left( \bigcup_{j=2}^{r-1}A_j \right)$ to $b_r$, and each vertex of $A_{1,2}\cup \left( \bigcup_{j=2}^{r-1}A_j \right)$ to $b_{r+1}$.
Set $|A_0|=\bigl(1-(r-1)(1-c)\bigr)(n-r-1)-r^2$,  $|A_{1,1}| = |A_{1,2}| = \tfrac{1}{2}\bigl(1-c \bigr)(n-r-1)+ r$, $|A_j|=(1-c)(n-r-1)+r$ for $j\ge 2$, feasible for $\frac{r-3/2}{r-1/2}<c<\frac{r-1}{r}$.
Let $S:=\sum_{j=2}^{r-1}|A_j|= (r-2)(1-c)(n-r-1)+(r-2)r$ and $A_1=A_{1,1}\cup A_{1,2}$. Since  $\bigl(r-\tfrac{3}{2}\bigr)(1-c) < c$ for $\frac{r-3/2}{r-1/2}<c<\frac{r-1}{r}$, one can check that $\Delta(G)\le cn$ for $n$ large.
Clearly $G$ is $K_r$-semi-saturated. Counting edges between $V(G)\setminus B$ and $B$ and adding $\binom{r+1}{2}$ edges inside $B$ gives
\[
e(G)=\bigl((r-1)+(r-2)(1-c)\bigr)n+O(1).
\]

\medskip
\noindent\textbf{Construction (III).}
Let $B:=\{b_1,\ldots,b_{r-1},b_r,b_{r+1},b_{r+2}\}$ induce a clique. Partition $V(G)\setminus B$ into disjoint sets $A_0,A_{1,1},A_{1,2},A_2,\ldots,A_{r-1}$, with no edges between distinct parts. Join each vertex of $A_0$ to each of $b_1,\ldots,b_{r-1}$ only.
For each $j\in\{2,\ldots,r-1\}$, join each vertex of $A_j$ to each vertex of $\{b_1,\ldots,b_{r-1}\}\setminus\{b_j\}$. Join each vertex of $A_{1,1}$ and each vertex of $A_{1,2}$ to each vertex of $\{b_2,\ldots,b_{r-1}\}$.
Join each vertex of $A_{1,1}\cup \left( \bigcup_{j=2}^{r-1}A_j \right)$ to $b_r$. Join each vertex of $A_{1,2}\cup A_2$ to $b_{r+1}$. Join each vertex of $A_{1,2}\cup \left( \bigcup_{j=3}^{r-1}A_j \right)$ to $b_{r+2}$.
Set $|A_0|=\bigl(1-(r-1)(1-c) \bigr)(n-r-2) -(r-1)(r+1)$, $|A_{1,1}|=\bigl(c-(r-2)(1-c)\bigr)(n-r-2)  -r(r+1)$, $|A_{1,2}|=\bigl((r-1)(1-c)-c \bigr)(n-r-2)+(r+1)^2$, $|A_j|=(1-c)(n-r-2)+r+1$ for $j\ge 2$, feasible for $\frac{r-2}{r-1}<c<\frac{r-3/2}{r-1/2}$.
Let $S:=\sum_{j=2}^{r-1}|A_j|= (r-2)(1-c)(n-r-2)+(r-2)(r+1)$, $T:=\sum_{j=3}^{r-1}|A_j|=S-|A_2|=(r-3)(1-c)(n-r-2)+(r-3)(r+1)$, and $A_1=A_{1,1}\cup A_{1,2}$.
One can check that $\Delta(G)\le cn$ for $n$ large.
Clearly $G$ is $K_r$-semi-saturated. Counting edges between $V(G)\setminus B$ and $B$ and adding $\binom{r+2}{2}$ edges inside $B$ gives
\[
e(G)=\bigl((3r-4)-(2r-2)c\bigr)n+O(1).
\]

\medskip
\noindent\textbf{Construction (IV).}
Let $B=\{b_1,\ldots,b_{r+2}\}$ induce a clique and partition $V(G)\setminus B$ into $A_1,\ldots,A_{r+2}$ as evenly as possible. For each $1\le i\le r+2$, join every vertex of $A_i$ to every vertex of $B\setminus\{b_i,b_{i+1}\}$, with indices read modulo $r+2$ (so $b_{r+3}=b_1$).
Each vertex of $\bigcup_{i=1}^{r+2} A_i$ has exactly $r$ neighbours in $B$. One can check that $\Delta(G)\le cn$ when $c>\frac{r}{r+2}$ and $n$ is large.
Every missing edge between vertices in distinct parts $A_i,A_j$ has enough common neighbours in $B$ to complete a $K_r$, so $G$ is $K_r$-semi-saturated.
Counting edges between $V(G)\setminus B$ and $B$ and adding $\binom{r+2}{2}$ edges inside $B$ gives
\[
e(G)=r(n-r-2)+\binom{r+2}{2}=rn+O(1).
\]
\q

\section{Hypergraph visualization of Theorem~\ref{T15}}\label{app:T15-matrices}

This appendix records the incidence-matrix form of the $(r-2)$-intersecting hypergraphs arising from Constructions~\textbf{(I)}--\textbf{(IV)}.
These matrices are intended only as a visualization of the neighborhood patterns that occur in the graph constructions of Appendix~\ref{app:T15-constructions}; they are \emph{not} used to prove Proposition~\ref{prop:T15-upper}.
For each $c$-interval in that proposition, we display a feasible solution to~\eqref{eq:1.3} whose objective value equals $f(r,c)$.

Following the idea of F\"uredi~\cite{Fur1}, we encode $\mathcal{H}$ by an $|\mathcal{E}|\times |V|$ \emph{incidence matrix}: the $(i,j)$-entry is $1$ if $b_j\in E_i$ and $\ind$ otherwise, where $V=\{b_1\ldots,b_{|V|}\}$ and $\mathcal{E}=\{E_1,\ldots,E_{|\mathcal{E}|}\}$.
A column of weights $(y_1,\ldots,y_{|\mathcal{E}|})^{\mathsf T}$ placed to the left of the matrix specifies a feasible solution to~\eqref{eq:1.3}.
The rows below are the distinct sets $N(x)\cap V$ for any $x\in V$ arising in cases~\textbf{(I)}--\textbf{(IV)} of Proposition~\ref{prop:T15-upper}.

\medskip
\noindent\textbf{Construction (I).} Let $\dfrac{r-1}{r} < c < 1$ and $\mathcal{H}$ be an $(r-2)$-intersecting hypergraph with $V=\{b_1,\ldots,b_r\}$ and $\mathcal{E}=\{E_1,\ldots,E_r\}$, where $E_i=V\setminus\{b_i\}$ for $1\le i\le r$. For each $1\le i\le r$, the weight of $E_i$ is $y_i=1/r$. The incidence matrix of $\mathcal{H}$:
columns are $b_1,\ldots,b_r$; row $E_i$  with weight $y_i=1/r$.
Take $r=6$ as an example:
\[
\begin{array}{r@{\hspace{0.75em}}c@{\hspace{0.5em}}l}
\dfrac{1}{r} &
\begin{pmatrix}
\ind&1&1&1&1&1\\
1&\ind&1&1&1&1\\
1&1&\ind&1&1&1\\
1&1&1&\ind&1&1\\
1&1&1&1&\ind&1\\
1&1&1&1&1&\ind
\end{pmatrix}.
\end{array}
\]Obviously, $a(\mathcal{H},c)= r-1$.

\medskip
\noindent\textbf{Construction (II).} Let $\dfrac{r{-}3/2}{r{-}1/2} < c < \dfrac{r{-}1}{r}$ and $\mathcal{H}$ be an $(r-2)$-intersecting hypergraph with $V=\{b_1,\ldots,b_r,b_{r+1}\}$ and $\mathcal{E}=\{E_1,E_2,\ldots,E_{r-1},E_{r}, E_{r+1}\}$, where $E_1=\{b_1,\ldots,b_{r-1}\}$ with weight $y_1= (r{-}1)c{-}(r{-}2)$,  $E_j=\{b_1,\ldots,b_r,b_{r+1}\}\setminus\{b_j\}$ with weight $y_j=1-c$ for $2\le j\le r-1$, $E_{r}=\{b_2,\ldots,b_{r-1},b_r\}$ and $E_{r+1}=\{b_2,\ldots,b_{r-1},b_{r+1}\}$ with weight $y_{r}=y_{r+1}=\tfrac{1-c}{2}$.
The incidence matrix of $\mathcal{H}$: columns are $b_1,\ldots,b_{r+1}$, row $E_i$  with weight $y_i$ for $1\le i\le r+1$.
Take $r=6$ as an example:
\[
\begin{array}{r@{\hspace{0.75em}}c@{\hspace{0.5em}}l}
\begin{array}{@{}c@{}} (r{-}1)c{-}(r{-}2) \\ 1{-}c \\ 1{-}c \\ 1{-}c \\ 1{-}c \\[0.2em]
(1-c)/2 \\[0.2em]
(1-c)/2 \end{array} &
\begin{pmatrix}
1&1&1&1&1&\ind&\ind\\
1&\ind&1&1&1&1&1\\
1&1&\ind&1&1&1&1\\
1&1&1&\ind&1&1&1\\
1&1&1&1&\ind&1&1\\
\ind&1&1&1&1&1&\ind\\
\ind&1&1&1&1&\ind&1
\end{pmatrix}.
\end{array}
\]
One can check that $\sum_{i=1}^{r+1} y_i=1$, $\sum_{i:b_j\in E_i}y_i\le c$ for every $1\le j\le r+1$, and
\[
	a(\mathcal{H},c)=
\sum_{i=1}^{r+1} |E_i|\,y_i
=(r{-}1)y_0+r(r{-}2)(1{-}c)+(r{-}1)(y_{r}{+}y_{r+1})
=2r-3-(r-2)c.
\]

\medskip
\noindent\textbf{Construction (III).} Let $\dfrac{r{-}2}{r{-}1} < c < \dfrac{r{-}3/2}{r{-}1/2}$ and $\mathcal{H}$ be an $(r-2)$-intersecting hypergraph with $V=\{b_1,\ldots,b_r,b_{r+2}\}$ and $\mathcal{E}=\{E_1,E_2,\ldots,E_{r-1},E_{r}, E_{r+1}\}$, where $E_1=\{b_1,\ldots,b_{r-1}\}$ with weight $y_1= (r{-}1)c{-}(r{-}2)$, $E_2=\{b_1,b_3,\ldots,b_r,b_{r+1}\}$ with weight $y_2=1{-}c$,
 $E_j=\{b_1,\ldots,b_{r-1},b_r,b_{r+2}\}\setminus\{b_j\}$ with weight $y_j=1-c$ for $3\le j\le r-1$, $E_{r}=\{b_2,\ldots,b_{r-1},b_r\}$ with weight $y_{r}=c{-}(r{-}2)(1{-}c)$ and $E_{r+1}=\{b_2,\ldots,b_{r-1},b_{r+1},b_{r+2}\}$ with weight $y_{r+1}=(r{-}1)(1{-}c){-}c$.
The incidence matrix of $\mathcal{H}$: columns are $b_1,\ldots,b_{r+2}$, row $E_i$  with weight $y_i$ for $1\le i\le r+1$.
Take $r=6$ as an example:
\[
\begin{array}{r@{\hspace{0.75em}}c@{\hspace{0.5em}}l}
\begin{array}{@{}c@{}} (r{-}1)c{-}(r{-}2) \\ 1{-}c \\ 1{-}c \\ 1{-}c \\ 1{-}c \\[0.2em]
c{-}(r{-}2)(1{-}c) \\[0.2em]
(r{-}1)(1{-}c){-}c \end{array} &
\begin{pmatrix}
1&1&1&1&1&\ind&\ind&\ind\\
1&\ind&1&1&1&1&1&\ind\\
1&1&\ind&1&1&1&\ind&1\\
1&1&1&\ind&1&1&\ind&1\\
1&1&1&1&\ind&1&\ind&1\\
\ind&1&1&1&1&1&\ind&\ind\\
\ind&1&1&1&1&\ind&1&1
\end{pmatrix}.
\end{array}
\]
One can check that $\sum_{i=1}^{r+1} y_i=1$, $\sum_{i:b_j\in E_i}y_i\le c$ for every $1\le j\le r+2$, and
\[
\begin{aligned}
	a(\mathcal{H},c)=
\sum_{i=1}^{r+1} |E_i|\,y_i
&=(r{-}1)y_0+(r{-}1)(1{-}c)+r(r{-}3)(1{-}c)+(r{-}1)y_{r}+r\,y_{r+1}\\
&=(r{-}1)+(2r{-}3)(1{-}c)-c =3r{-}4{-}(2r{-}2)c.
\end{aligned}
\]

\medskip
\noindent\textbf{Construction (IV).} Let $\dfrac{r}{r+2} < c < \dfrac{r-2}{r-1}$ and $\mathcal{H}$ be an $(r-2)$-intersecting hypergraph with $V=\{b_1,\ldots,b_r,b_{r+2}\}$ and $\mathcal{E}=\{E_1,E_2,\ldots, E_{r+2}\}$, where $E_i=V\setminus\{b_i,b_{i+1}\}$ (indices mod $r+2$) with weight $y_i=1/(r+2)$ for $1\le i\le r+2$.
The incidence matrix of $\mathcal{H}$: columns are $b_1,\ldots,b_{r+2}$, row $E_i$  with weight $y_i$ for $1\le i\le r+2$.
Take $r=6$ as an example:
\[
\begin{array}{r@{\hspace{0.75em}}c@{\hspace{0.5em}}l}
\dfrac{1}{r+2} &
\begin{pmatrix}
\ind&\ind&1&1&1&1&1&1\\
1&\ind&\ind&1&1&1&1&1\\
1&1&\ind&\ind&1&1&1&1\\
1&1&1&\ind&\ind&1&1&1\\
1&1&1&1&\ind&\ind&1&1\\
1&1&1&1&1&\ind&\ind&1\\
1&1&1&1&1&1&\ind&\ind\\
\ind&1&1&1&1&1&1&\ind
\end{pmatrix}.
\end{array}
\]
Obviously, $a(\mathcal{H},c)=r$.

Thus, on each $c$-interval in Proposition~\ref{prop:T15-upper}, the program~\eqref{eq:1.3} admits a feasible solution whose objective value equals~$f(r,c)=A_{r-2}(c)$.

\end{document}